\DeclarePairedDelimiter{\abs}{\lvert}{\rvert}
\DeclarePairedDelimiter{\norma}{\lVert}{\rVert}
\DeclareMathOperator{\Div}{div}
\newcommand{\numberset}{\mathbb}
\newcommand{\N}{\numberset{N}}
\newcommand{\Z}{\numberset{Z}}
\newcommand{\R}{\numberset{R}}
\newcommand{\diff}{\mathrm{d}}
\newcommand{\dom}{\mathrm{dom}}
\newcommand{\p}{\mathcal{P}}
\newcommand{\eps}{\epsilon}
\newcommand{\la}{\lambda}
\newcommand{\cH}{\mathcal{H}}
\newcommand{\ov}{\overline}
\newcommand{\Ccon}{\stackrel{\cC}{\rightarrow}}
\newcommand{\cE}{\,{\mathcal E}\,}
\newcommand{\cC}{\,{\mathcal C}\,}
\renewcommand\rho\varrho
\newcommand\al\alpha
\newcommand\ds\displaystyle
\newcommand{\dist}{\operatorname{dist}}
\newcommand{\supp}{\operatorname{supp}}
\newcommand{\beq}{\begin{equation}}
\newcommand{\eeq}{\end{equation}}
\newcommand{\be}{\begin{equation*}}
\newcommand{\ee}{\end{equation*}}
\newcommand{\bmat}{\begin{pmatrix}}
\newcommand{\emat}{\end{pmatrix}}
\newcommand{\Hmm}[1]{\leavevmode{\marginpar{\tiny%
$\hbox to 0mm{\hspace*{-0.5mm}$\leftarrow$\hss}%
\vcenter{\vrule depth 0.1mm height 0.1mm width \the\marginparwidth}%
\hbox to
0mm{\hss$\rightarrow$\hspace*{-0.5mm}}$\\\relax\raggedright #1}}}
\newtheorem{theorem}{Theorem}
\newtheorem{lemma}{Lemma}
\newtheorem{proposition}{Proposition}
\newtheorem{definition}{Definition}
\theoremstyle{plain}
\newtheorem{remark}{Remark}
\numberwithin{equation}{section}
\author[F.~Ferraresso]{Francesco Ferraresso}
\address{School of Mathematics, Cardiff University, Senghennydd Road, Cardiff, CF24 4AG, UK}
\email{FerraressoF@cardiff.ac.uk}
\date{\today}
\thanks{}
\title[]{On the spectral instability for weak intermediate triharmonic problems}
\begin{document}


\begin{abstract}{We define the weak intermediate boundary conditions for the triharmonic operator $- \Delta^3$. We analyse the sensitivity of
this type of boundary conditions upon domain perturbations. We construct a perturbation $(\Omega_\eps)_{\eps > 0}$ of a smooth domain $\Omega$ of $\R^N$ for which the weak intermediate boundary conditions on $\partial \Omega_\eps$ are not preserved in the limit on $\partial \Omega$, analogously to the Babu\v{s}ka paradox for the hinged plate. Four different boundary conditions can be produced in the limit, depending on the convergence of $\partial \Omega_\eps$ to $\partial \Omega$.  In one particular case, we obtain a ``strange'' boundary condition featuring a microscopic energy term related to the shape of the approaching domains.  Many aspects of our analysis could be generalised to an arbitrary order elliptic differential operator of order $2m$ and to more general domain perturbations.}
\end{abstract}

\maketitle

\section{Introduction}
Let $W$ be a smooth bounded domain of $\R^{N-1}$, $b \in C^{4}(W)$ be a periodic, positive function with period $Y = (-1/2, 1/2)^{N-1}$.
Let $\alpha \in (0,+ \infty)$ be fixed, and define
\begin{equation}\label{eq: geometric setting}
\begin{split}
\Omega_\eps &:= \bigg\{ x=(\bar{x}, x_N) \in \Omega : \bar{x} \in W, -1 < x_N < g_\eps(\bar{x}) = \eps^\alpha b\bigg(\frac{\bar{x}}{\eps}\bigg) \bigg\}\\
\Omega &:= W \times (-1,0),
\end{split}
\end{equation}
for $\eps \in (0,1]$. We consider the weak intermediate problem for the triharmonic operator $A_\eps = (- \Delta)^3 + I$ in $\Omega_\eps$, given by
\begin{equation} \label{triharmonic weak first}
\int_{\Omega_\eps} \big( D^3 u_\eps : D^3 \varphi + u_\eps \varphi \big) = \la(\Omega_\eps) \int_{\Omega_\eps} u_\eps \, \varphi, \quad \varphi \in H^3(\Omega_\eps) \cap H^1_0(\Omega_\eps),
\end{equation}
where $D^3 f : D^3 g = \sum_{i,j,k = 1, \dots N} \frac{\partial^3 f}{\partial x_i \partial x_j \partial x_k} \frac{\partial^3 g}{\partial x_i \partial x_j \partial x_k}$ is the Frobenius product of the two tensors $D^3f$ and $D^3 g$, $\la(\Omega_\eps)$ is the eigenvalue and $u_\eps \in H^3(\Omega_\eps) \cap H^1_0(\Omega_\eps)$ is the eigenfunction. Here and in the sequel $H^k$, $H^k_0$ denote the standard Sobolev spaces with regularity index $k$ and integrability index $2$.

We are interested in the behaviour of the solutions $u_\eps$ and of the eigenvalues $\la(\Omega_\eps)$ of \eqref{triharmonic weak first} as $\eps \to 0$. Note that $\Omega_\eps$ approaches $\Omega$ as $\eps \to 0$ in a rather singular way, since the function $g_\eps$ oscillates with very large frequency as $\eps \to 0$. It is worth noting that if $\alpha < 3$, it is not possible to construct a family of smooth diffeomorphisms $\Phi_\eps : \Omega \to \Omega_\eps$ such that $\norma{\Phi_\eps - I}_{C^3(\R^N, \R^N)} \to 0$ as $\eps \to 0$. Therefore classical and elegant techniques based on the direct comparison of the Rayleigh quotients associated to $\la(\Omega_\eps)$ and $\la(\Omega)$ do not work in general in the singular setting described in \eqref{eq: geometric setting}.

Polyharmonic operators $(-\Delta)^m$ with intermediate or Neumann boundary conditions are known to be rather sensitive to variation of the domains in $\R^N$, $N > 1$. See for example \cite{BuosoLamb} for regular perturbations, and \cite{ArrLamb, FerreroLamb, MR3762319} for more singular settings. When $m=1$, unexpected limiting behaviour of the eigenvalues of the Neumann Laplace operator $-\Delta_{\rm neu}$ is well-known since the `dumbbell' example in \cite{CouHil}, where $\la_2(\Omega_\eps) \to 0$ as $\eps \to 0$ instead of converging to
$\la_2(\Omega) > 0$. More in general, let $R_\eps$ be a smooth domain of $\R^N$ converging (in Hausdorff sense) to a lower dimensional set $D \subset \R^d$, $d < N$, and let $\Omega_\eps$ be the smooth domain obtained by attaching $R_\eps$ to a bounded smooth domain $\Omega \subset \R^N$. Then, the eigenvalues of $-\Delta_{\rm neu}$ in $\Omega_\eps$ will not converge only to the respective eigenvalues in $\Omega$, but also to the eigenvalues of a differential problem in $D$. Indeed, the eigenvalues of $-\Delta_{\rm neu}$ on $R_\eps$ are known to converge to the eigenvalues of the Laplace-Beltrami operator on $D$, see e.g., \cite{sch}.  See also \cite{HalRau} for more results for reaction-diffusion operators on thin domains, \cite{Jim, JimKos} for dumbbell-type domains,\cite{ArrVil1, ArrVil2} for domains with fast oscillating boundaries and \cite{CoDaDRMus, DRMus} for domains with small holes. When $m =2$ the Babu\v{s}ka Paradox for the biharmonic operator $\Delta^2_{\rm SBC}$ shows that intermediate boundary conditions are not stable under polygonal approximation of a smooth domain in $\R^2$, see \cite{MazNaz} and the introduction to \cite{FerLamb} for further details. Elliptic operators of order $2m$ with $m \geq 2$ and diverse boundary conditions have been recently considered in \cite{ColPro, BuoKen} and in the preprint \cite{FerPro} where it is shown that the eigenvalues of the biharmonic operator with Neumann boundary conditions on a thin domain converge, as the size of the domain tends to zero, to the eigenvalues of a system of equations on the boundary. The common thread in these examples is the lack of \textit{spectral stability}, see Def. \ref{spectral stability}; roughly speaking, a sequence of operators $(A_n)_n$ satisfying \emph{the same} boundary conditions is spectrally stable if it is spectrally exact (in the sense of \cite{MR1205756}) and the limiting operator $A$ satisfies the same boundary conditions as the operators $A_n$, $n \in \N$. \\
Problem \eqref{triharmonic weak first} is an interesting example of spectral instability, which, according to \cite{ArrLamb, FerLamb}, can be regarded as a smooth version of the Babu\v{s}ka paradox. In order to describe our main result, it is convenient to define a class of triharmonic problems with different boundary conditions. For every $\eps > 0$, let $V(\Omega_\eps)$ be a linear subspace of $H^3(\Omega_\eps)$ containing $H^3_0(\Omega_\eps)$. Assume that $V(\Omega_\eps)$ is compactly embedded in $L^2(\Omega_\eps)$, and it is complete with respect to the $H^3(\Omega_\eps)$ norm, which is induced by the quadratic form
\[
Q_{\Omega_\eps}(u) = \int_{\Omega_\eps} |D^3 u|^2 + |u|^2, \quad \quad u \in V(\Omega_\eps).
\]

\begin{figure}
\centering
\includegraphics[width=0.7\textwidth]{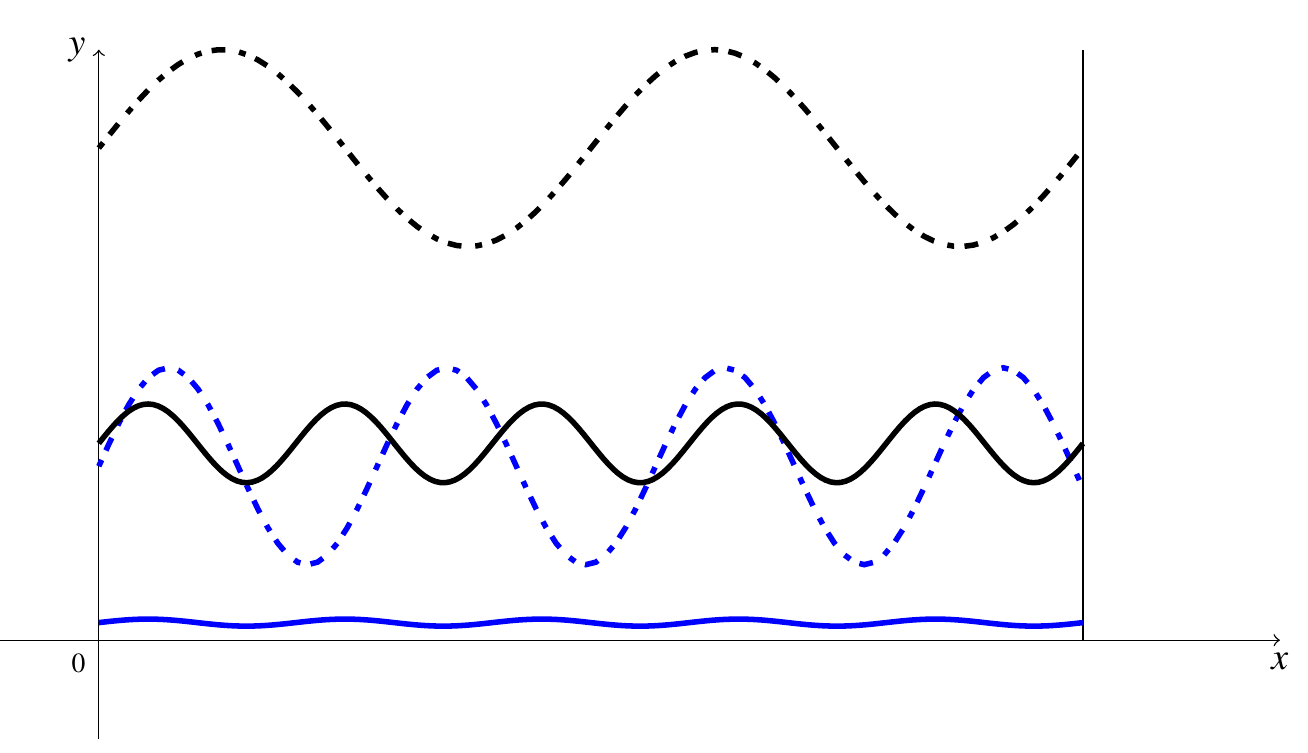}
\caption{Graph of $g_\eps(x) = \eps^\alpha b(x/\eps)$ with $b(y) = 10 + 2 \sin (\pi y/5)$. Black colour corresponds to $\alpha = 1$, blue to
$\alpha = 5/2$. The dashed line corresponds to $\epsilon = 0.5$, the thick line to $\epsilon = 0.2$. The blue graph flattens out much faster
than the black one as $\eps \to 0$.}
\label{fig: oscillating boundaries}
\end{figure}

We then define
\begin{equation} \label{triharmonic general}
\int_{\Omega_\eps} \big( D^3 u_\eps : D^3 \varphi + u_\eps \varphi \big) = \la(\Omega_\eps) \int_{\Omega_\eps} u_\eps \varphi, \quad \varphi \in V(\Omega_\eps).
\end{equation}
By the second representation theorem \cite[Theorem 2.23, VI.2]{Kato}, the sesquilinear form in \eqref{triharmonic general} is associated to a positive self-adjoint operator $A_{V(\Omega_\eps)} := (-\Delta)_{V(\Omega_\eps)}^3 + I$. The inverse $A_{V(\Omega_\eps)}^{-1}$ is a compact operator in $L^2(\Omega_\eps)$, due to the compact embedding of $V(\Omega_\eps)$ in $L^2(\Omega_\eps)$. Thus, the spectrum associated to \eqref{triharmonic general} is discrete and consists of an unbounded sequence of positive eigenvalues of finite multiplicity $(\la_j(\Omega_\eps))_{j \in \N}$.

It is shown in \cite{FerLamb} that if $V(\Omega_\eps) = H^3(\Omega_\eps) \cap H^2_0(\Omega_\eps)$ for all $\eps \in (0,1]$ then the spectrum $(\la_j(\Omega_\eps))_{j \in \N}$ of problem \eqref{triharmonic general} approaches the spectrum $(\la_k(\Omega))_{k \in \N}$ of the same problem \eqref{triharmonic general} with $\Omega$ in place of $\Omega_\eps$, provided that $\alpha > \frac{3}{2}$. If instead $0 < \alpha < \frac{3}{2}$ the limiting problem satisfies Dirichlet boundary conditions (corresponding to $V(\Omega) = H^3_0(\Omega)$ in \eqref{triharmonic general}) on $W \times \{0\}$. It is also shown that this Babu\v{s}ka-type paradox is shared by all the polyharmonic operators $(-\Delta)_{\rm SBC}^m$ with strong intermediate boundary conditions, shortened SBC, for which $V(\Omega_\eps) = H^m(\Omega_\eps) \cap H^{m-1}_0(\Omega_\eps)$ in the polyharmonic analogous of \eqref{triharmonic general}. In other words, polyharmonic operators with SBC are spectrally stable on $(\Omega_\eps)_{\eps \in [0,1]}$ provided that $\alpha > 3/2$.\\
As already pointed out in \cite{FerLamb}, there is another possible choice of intermediate boundary conditions for the triharmonic operator, the weak intermediate boundary conditions (shortened WBC) defined implicitly by \eqref{triharmonic weak first}. From the spectral stability result \cite[Theorem 4]{FerLamb} we know that if $\Omega_\eps$ and $\Omega$ are as in \eqref{eq: geometric setting} the sequence of operators $A_\eps = (- \Delta_{\rm WBC}^3 + I))_{\eps \in [0,1]}$ associated to \eqref{triharmonic weak first} is spectrally stable provided that $\alpha > 5/2$.\\
The main result of this article, see Thm.\ref{thm: spectral conv tri weak}, is the analysis of the case $\alpha \leq 5/2$. We prove that there are three different cases depending on $\alpha$, that can be summarised as follows
\begin{enumerate} [label =(\roman*)]
\item if $\alpha \in \big(\frac32, \frac52 \big)$ the eigenvalues $\la_j(\Omega_\eps)$ of \eqref{triharmonic weak first} converge in the limit to the eigenvalues of $(- \Delta)^3 + I$ with mixed WBC-SBC.
\item if $\alpha \in (0, 1)$, the limiting operator $(- \Delta)^3 + I$ satisfies mixed boundary conditions of type WBC-Dirichlet.
\item if $\alpha = \frac{5}{2}$ the limiting boundary value problem features a 'strange' boundary conditions which keeps track of the shape of the periodic function $b$ in \eqref{eq: geometric setting}.
\end{enumerate}
The case $\alpha \in (1, 3/2]$ is not considered in this article and it is left as an open problem, see Remark \ref{rem: open} for further explanations of why this range of values does not seem treatable with our method of proof.\\
While Theorem \ref{thm: spectral conv tri weak} look similar to \cite[Theorem 7]{FerLamb}, we point out that we had to face several new technical difficulties due to the extreme singularity of the perturbation $\Omega \mapsto \Omega_\eps$ when $\alpha \leq 5/2$. Indeed, the diffeomorphism $\Phi_\eps : \Omega \mapsto \Omega_\eps$ that we use in the proof of the main theorem has derivatives with strongly divergent $L^2$-norms as $\eps \to 0$. Furthermore, it is not possible to balance this unboundedness of the derivatives as in \cite{FerLamb}, where it was pivotal to exploit the vanishing of \textit{both} $u_\eps$ and $\frac{\partial u}{\partial n}$ at the boundary. Finally, the proof of \cite[Theorem 7]{FerLamb} in the degenerate case $\alpha \leq 3/2$ relies on \cite[Lemma 4.3]{CasDiaz}, for which it is fundamental that the critical threshold for the spectral stability is $\alpha = 3/2$. This condition is clearly not satisfied by weak intermediate problems. \\
To overcome these additional hurdles, we prove a new, yet rather technical degeneration result, see Lemma \ref{lemma: degeneration H^3 H^1_0}. Its proof involves a careful analysis of the behaviour of the derivatives of functions $u_\eps \in H^3(\Omega_\eps) \cap
H^1_0(\Omega_\eps)$ close to the oscillating boundaries. Broadly speaking, we need a combination of three arguments: (i) the use of the
anisotropic unfolding operator to control the $L^2$-norm of the derivatives of $u_\eps$ close to the oscillating boundary; (ii) the weighted
convergence of the traces of the unfolded functions $\hat{u}_\eps$ to the trace of the weak limit $u$ of the original functions $u_\eps$;
(iii) the use of the standard unfolding operators (which is equivalent to the so-called two-scale convergence) to deduce additional
information on the trace of $u$ when $\alpha \leq 1$ and $1 < \alpha < 2$.\\
We refer the reader to \cite{CioDo} for more details about homogenisation techniques and to \cite{CioDamGri} for the unfolding operator. The
use of the anisotropic unfolding operator and some of the techniques used in the proof of Lemma \ref{lemma: degeneration H^3 H^1_0} were
inspired by a careful reading of \cite{CasDiaz, CasDiazLuLaySuGrau} and by some classical asymptotic analysis techniques in the spirit of
\cite{MazNazPlaI, MazNazPlaII}. \\
This article is organised in the following way. In Section \ref{sec: main} we introduce the weak intermediate boundary conditions for the
triharmonic operator $-\Delta^3$, and we state the main result of the paper, Theorem \ref{thm: spectral conv tri weak}. In Section \ref{sec: auxil} we collect some standard results about the unfolding operator and the tangential calculus. In Section \ref{sec: spec stab} we recall some definitions and results about the convergence of bounded operators on varying Hilbert spaces, and we give the definitions of spectral exactness and spectral stability. In Section \ref{sec: degeneration tri} we prove statements $(iii)$ and $(iv)$ of Theorem \ref{thm: spectral conv tri weak}. Section \ref{sec: homog} is devoted to the proof of Theorem \ref{thm: spectral conv tri weak}$(ii)$, which requires several results from homogenisation theory. In the Appendices we collect some auxiliary results among which the proof of the Triharmonic Green Formula, which is of general interest.

\section{Main result} \label{sec: main}

\subsection{Boundary conditions.}
Given a bounded domain $\Omega \subset \R^N$, we consider the quadratic form defined by
\begin{equation}
\label{def: Q tri}
Q_{\Omega}(u,v) = \int_{\Omega} D^3 u : D^3 v \, dx + \int_{\Omega} u v \, dx,
\end{equation}
for all $u,v \in V(\Omega)$, where $V(\Omega)$ is a linear subspace of $H^3(\Omega)$, $H^3_0(\Omega) \subset V(\Omega)$ and $V$ is complete with respect to the $H^3$-norm. By the second representation theorem
\cite[Theorem 2.23, VI.2]{Kato}, there exists a densely defined, non-negative and self-adjoint operator $A_{V(\Omega)}$ with domain
$\dom(A_{V(\Omega)}) \subset H^3(\Omega)$ such that
\[
Q_{\Omega}(u,v) = (A^{1/2}_{V(\Omega)} u, \, A^{1/2}_{V(\Omega)} v),
\]
for all $u,v \in V(\Omega)$. Assume that the embedding of $V(\Omega)$ in $L^2(\Omega)$ is compact.   Then, $A_{V(\Omega)}$ has compact
resolvent, hence it has purely discrete spectrum, made of an increasing sequence of eigenvalues diverging to $+ \infty$.  Let us consider the
eigenvalue problem
\begin{equation}
\label{def: tri main eigen problem}
\int_{\Omega} D^3 u : D^3 v \, dx + \int_{\Omega} u v \, dx  = \lambda \int_{\Omega} uv \, dx,
\end{equation}
in the unknowns $\lambda$, $u \in V(\Omega)$ for all $v \in V(\Omega)$. We briefly recall the boundary conditions we are interested in. Their
identification is achieved via the Triharmonic Green Formula, stated and proved in Theorem \ref{thm: trih green}. Let $k \in \N$, $0 \leq k
\leq 3$ and let us set $V(\Omega) = H^3(\Omega) \cap H^k_0(\Omega)$. If $k = 3$ then $V(\Omega) = H^3_0(\Omega)$ in \eqref{def: Q tri}.
Formula \eqref{eq: triharmonic green 2} implies that $A_{V(\Omega)}$ is the Dirichlet triharmonic operator associated with
\begin{equation}\label{Dirichlet bc}
\begin{cases}
- \Delta^3 u + u = \lambda u, &\textup{in $\Omega$,}\\
u = \frac{\partial u}{\partial n} = \frac{\partial^2 u}{\partial n^2} = 0, &\textup{on $\partial \Omega$.}
\end{cases}
\end{equation}

When $k=2$, $V(\Omega) = H^3(\Omega) \cap H^2_0(\Omega)$. By \eqref{eq: triharmonic green 2} we deduce that the classical eigenvalue problem
associated with \eqref{def: tri main eigen problem} on $V(\Omega)$ is defined by
\begin{equation}
\label{triharmonic strong}
\begin{cases}
- \Delta^3 u + u = \lambda u, &\textup{in $\Omega$,}\\
u = \frac{\partial u}{\partial n} = 0, &\textup{on $\partial \Omega$,}\\
\frac{\partial^3 u}{\partial n^3} = 0, &\textup{on $\partial \Omega$.}
\end{cases}
\end{equation}
In this case we say that the classical operator $- \Delta^3 u + u$ associated with problem \eqref{triharmonic strong} satisfies
\textit{strong intermediate boundary conditions} on $\partial \Omega$.

Finally, when $k=1$, $V(\Omega) = H^3(\Omega) \cap H^1_0(\Omega)$. By \eqref{eq: triharmonic green 2} we deduce that
\begin{equation}
\label{triharmonic weak}
\begin{cases}
- \Delta^3 u + u = \lambda u, &\textup{in $\Omega$,}\\
u = 0, &\textup{on $\partial \Omega$,}\\
 \big( (n^T D^3u)_{\partial \Omega} : D_{\partial\Omega}n \big) - \frac{\partial^2 (\Delta u)}{\partial n^2} - 2 \Div_{\partial
 \Omega}(D^3u[n\otimes n])_{\partial \Omega} = 0,  &\textup{on $\partial \Omega$,}\\
\frac{\partial^3 u}{\partial n^3} = 0, &\textup{on $\partial \Omega$,}
\end{cases}
\end{equation}
where we have denoted by $( \cdot )_{\partial \Omega}$ the tangential part of a tensor (which can be defined formally exactly as the tangential Hessian, see Def. \ref{tang operators} below) , $D_{\partial\Omega}$ is the tangential Jacobian, $n$ is the outer unit normal to $\partial \Omega$, $\Div_{\partial \Omega}$ is the tangential divergence, $[n\otimes n] = (n_i n_j)_{i,j= 1, \dots, N}$.
In this case, we say that the classical operator $- \Delta^3 u + u$ associated with problem \eqref{triharmonic strong} satisfies
\textit{weak intermediate boundary conditions} on $\partial \Omega$. Note that the curvature tensor $D_{\partial \Omega} n$ appears non-trivially in the second boundary condition. To the best of our knowledge these boundary conditions were never defined before in this form.
%

\subsection{Main theorem}
%

Let $\Omega_\eps$, $\eps > 0$ and $\Omega$ be as in  \eqref{eq: geometric setting}. Set $\Gamma := \overline{W} \times \{0\}$. Let $A_{\Omega_\epsilon}$ be the operator associated to \eqref{triharmonic weak first}, $\eps > 0$, and define $A_\Omega$ in a analogous way by replacing $\Omega_\eps$ with $\Omega$. Let $A_{\Omega, S}$ be the operator associated to
\begin{equation}
\label{lim prob 1}
\begin{cases}
A_{\Omega, S} u := - \Delta^3 u + u = \lambda u, &\textup{in $\Omega$,}\\
(WBC), &\textup{on $\partial \Omega \setminus \Gamma$ }\\
(SBC), &\textup{on $\Gamma$,}\\
\end{cases}
\end{equation}
where $(WBC)$ are the boundary conditions in \eqref{triharmonic weak}, $(SBC)$ those in \eqref{triharmonic strong}. Let $A_{\Omega,D}$ be the operator associated to
\begin{equation}
\label{lim prob 2}
\begin{cases}
A_{\Omega, D}u := - \Delta^3 u + u = \lambda u, &\textup{in $\Omega$,}\\
(WBC), &\textup{on $\partial \Omega \setminus \Gamma$ }\\
(DBC), &\textup{on $\Gamma$,}\\
\end{cases}
\end{equation}
where $(DBC)$ are the Dirichlet boundary conditions defined in \eqref{Dirichlet bc}. Finally, let $\hat{A}_{\Omega}$ be the operator associated to
\begin{equation}
\label{lim prob 3}
\begin{cases}
\hat{A}_{\Omega} u := - \Delta^3 u + u = \lambda u, &\textup{in $\Omega$,}\\
(WBC), &\textup{on $\partial \Omega \setminus \Gamma$ }\\
u=\partial_{x^3_N} u = 0, &\textup{on $\Gamma$}, \\
\Delta (\partial_{x^2_N} u) + 2 \Delta_{N-1} (\partial_{x^2_N} u) + K_1 \partial_{x_N} u = 0, &\textup{on $\Gamma$,}
\end{cases}
\end{equation}
where $K_1 > 0$ is given by
{\small\begin{equation}\label{def: K1}
    \begin{split}
    K_1 &=  \int_Y \Bigg( \Delta^2\Bigg(\frac{\partial V}{\partial y_N}\Bigg) + \Delta_{N-1}\Bigg(\frac{\partial (\Delta V)}{\partial
    y_N}\Bigg) + \Delta^2_{N-1}\Bigg(\frac{\partial V}{\partial y_N} \Bigg) \Bigg) b(\bar{y}) \diff{\bar{y}} \\
    &= \int_{Y\times (-\infty,0)} |D^3 V|^2\,\diff{y},
    \end{split}
\end{equation}
}where the function $V$ is $Y$-periodic in the variables $\bar{y}$ and satisfies the following microscopic problem
\begin{equation}\label{def: V}
\begin{cases}
\Delta^3 V = 0, &\textup{in $Y \times (-\infty, 0)$},\\
V(\bar{y}, 0) = b(\bar{y}), &\textup{on $Y$},\\
- \partial_{y_N^2} (\Delta V) + 2 \partial_{y_N^2}(\Delta_{N-1}V) = 0, &\textup{on $Y$},\\
\partial_{y_N^3} V= 0, &\textup{on $Y$}.
\end{cases}
\end{equation}
Then we have the following

\begin{theorem}
\label{thm: spectral conv tri weak}
For $\eps \geq 0$ let $\Omega_\eps \subset \R^N$ be defined by \eqref{eq: geometric setting}. Let $A_{\Omega_\epsilon}$, $\eps > 0$, $A_\Omega$, $A_{\Omega, S}$, $A_{\Omega, D}$, $\hat{A}_{\Omega}$ be the operators defined above in \eqref{lim prob 1}, \eqref{lim prob 2}, \eqref{lim prob 3}. Then:
\begin{enumerate}[label=(\roman*)]
\item \textup{[Spectral stability]} If $\alpha > 5/2$, then $A^{-1}_{\Omega_\epsilon} \overset{\mathcal{C}}{\rightarrow} A^{-1}_{\Omega}$.
\item \textup{[Strange term]} If $\alpha = 5/2$, then $A^{-1}_{\Omega_\epsilon} \overset{\mathcal{C}}{\rightarrow} \hat{A}^{-1}_{\Omega}$.
\item \textup{[Mild instability]} If $3/2 <\alpha < 5/2$, then $A^{-1}_{\Omega_\epsilon} \overset{\mathcal{C}}{\rightarrow}
    A^{-1}_{\Omega, S}$.
\item \textup{[Strong instability]} If $\alpha \leq 1$, then $A^{-1}_{\Omega_\epsilon} \overset{\mathcal{C}}{\rightarrow}
    A^{-1}_{\Omega,D}$.
\end{enumerate}
In particular, the eigenvalues $\la_j(\Omega_\eps)$, $j \geq 1$, of \eqref{triharmonic weak first} converge as $\eps \to 0$ to the eigenvalues of $A_{\Omega}$ in case $(i)$, $\hat{A}_{\Omega}$ in case $(ii)$, $A_{\Omega, S}$ in case $(iii)$ and $A_{\Omega,D}$ in case $(iv)$.
\end{theorem}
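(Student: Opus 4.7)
The proof follows the compact-convergence framework for operators on varying Hilbert spaces recalled in Section \ref{sec: spec stab}: to obtain $A_{\Omega_\eps}^{-1} \Ccon A^{-1}$, with $A$ chosen to be $A_\Omega$, $\hat{A}_\Omega$, $A_{\Omega,S}$ or $A_{\Omega,D}$ according to the case, it suffices to verify a weak-limit identification for the solutions $u_\eps = A_{\Omega_\eps}^{-1} f_\eps$ of bounded data, together with a recovery sequence realising the $\Gamma$-$\liminf$ of the quadratic forms $Q_{\Omega_\eps}$. Case $(i)$ is already covered by \cite[Theorem 4]{FerLamb} and enters only as a reference point.

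For cases $(iii)$ and $(iv)$ my plan is as follows. I would fix a canonical diffeomorphism $\Phi_\eps : \Omega \to \Omega_\eps$ that stretches the $x_N$-direction to fit the oscillating graph, and pull back the solutions to $\tilde u_\eps := u_\eps \circ \Phi_\eps \in H^1_0(\Omega)$. Because the higher derivatives of $\Phi_\eps$ have $L^\infty$-norms that blow up when $\alpha \leq 5/2$, the naive $H^3$ estimate is lost; nevertheless, an interior $H^3$ bound on compacta away from $\Gamma$ survives and yields a subsequential weak limit $u$ with $u \in H^3_{\rm loc}(\Omega) \cap H^1_0(\Omega)$. Inside $\Omega$, testing against $\varphi \in C^\infty_c(\Omega)$ gives $-\Delta^3 u + u = \lambda u$ routinely, so the whole burden lies on the boundary behaviour of $u$ on $\Gamma$. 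Here I would invoke Lemma \ref{lemma: degeneration H^3 H^1_0}, whose proof combines the anisotropic unfolding operator (to control $L^2$-norms of derivatives near $\Gamma$), weighted convergence of the traces of the unfolded functions, and—only in the regime $\alpha \leq 1$—the standard unfolding/two-scale operator to extract an extra vanishing statement on $\partial_n^2 u$. The outcome is that $\partial_n u$ vanishes on $\Gamma$ when $3/2 < \alpha < 5/2$ (producing $A_{\Omega,S}$) and both $\partial_n u$ and $\partial_n^2 u$ vanish when $\alpha \leq 1$ (producing $A_{\Omega,D}$). This degeneration statement is the main technical obstacle of the paper and is precisely the reason why the range $\alpha \in (1, 3/2]$ has to be left open, see Remark \ref{rem: open}.

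The critical case $(ii)$ with $\alpha = 5/2$ is genuinely two-scale and the cell function $V$ defined by \eqref{def: V} is the new ingredient. On the recovery side I would take an ansatz of the form
\[
u_\eps(x) = \tilde u(x) + \eps^{5/2}\, \chi_\eps(x)\, V\!\left(\frac{\bar x}{\eps},\, \frac{x_N}{\eps}\right) \psi(\bar x),
\]
where $\tilde u$ is a suitable extension of the target $u$, $\psi$ captures the trace of $\partial_{x_N} u$ on $\Gamma$, and $\chi_\eps$ is a cut-off localising the corrector to a neighbourhood of $\Gamma$. Expanding $Q_{\Omega_\eps}(u_\eps)$ through the anisotropic unfolding operator of Section \ref{sec: auxil}, the leading microscopic contribution is exactly $\int_{Y \times (-\infty,0)} |D^3 V|^2 |\psi|^2 = K_1 \int_\Gamma |\psi|^2$, by the Green-type identity in \eqref{def: K1}; passing to the limit in the weak formulation and integrating by parts tangentially on $\Gamma$ yields the extra surface term $K_1 \partial_{x_N} u$ in the last boundary condition of \eqref{lim prob 3}. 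The $\liminf$-direction requires, conversely, that any family with bounded energy admits a two-scale limit whose oscillatory corrector is forced to solve the cell problem \eqref{def: V}; this is the hard part of case $(ii)$, because one must control decay of the two-scale profile as $y_N \to -\infty$ on the unbounded cylinder $Y \times (-\infty,0)$ and match the Neumann-type cell boundary conditions on $Y$.

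Once the weak-limit identification and the recovery sequences are in place in each regime, compact convergence of the resolvents follows directly from the general criterion of Section \ref{sec: spec stab}, and the convergence of the eigenvalues $\la_j(\Omega_\eps)$ and eigenfunctions claimed in the theorem is an automatic consequence. The subtle point on which each of the four cases truly differs is the precise surface behaviour of the limiting functions on $\Gamma$: regular in $(i)$, microscopically corrected in $(ii)$, forced to satisfy SBC in $(iii)$, and forced to satisfy DBC in $(iv)$.
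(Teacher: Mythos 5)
Your outline captures the overall architecture — case $(i)$ via \cite[Theorem 4]{FerLamb}, cases $(iii)$–$(iv)$ via the degeneration Lemma \ref{lemma: degeneration H^3 H^1_0}, case $(ii)$ via the cell problem \eqref{def: V} and the constant $K_1$, with Remark \ref{rem: open} explaining the gap at $\alpha\in(1,3/2]$ — and these are indeed the correct ingredients. But you depart from the paper's proof in a way that creates a real difficulty in cases $(iii)$–$(iv)$, and you take a genuinely different (though defensible) route in case $(ii)$.

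For cases $(iii)$–$(iv)$, you propose to pull back the solutions, $\tilde u_\eps := u_\eps\circ\Phi_\eps$, through a diffeomorphism $\Phi_\eps:\Omega\to\Omega_\eps$. Since the derivatives of $\Phi_\eps$ blow up for $\alpha\le 5/2$, as you note, this only yields an $H^3_{\mathrm{loc}}(\Omega)$ bound, so you can only extract a weak limit $u\in H^3_{\mathrm{loc}}(\Omega)$. But Lemma \ref{lemma: degeneration H^3 H^1_0}, which is where all the boundary information comes from, requires $u_\eps|_\Omega\rightharpoonup u$ weakly in $H^3(\Omega)$ — a \emph{global} $H^3$ bound up to $\Gamma$, obtained from the fact that $\Omega\subset\Omega_\eps$ so the restriction $u_\eps|_\Omega$ is trivially bounded by $\norma{u_\eps}_{H^3(\Omega_\eps)}$. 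The paper uses precisely this restriction operator $E_\eps u_\eps = u_\eps|_\Omega$ in Condition (C), and reserves the pullback $T_\eps\varphi=\varphi\circ\Phi_\eps$ exclusively for the \emph{recovery} direction (condition (C2)). With your pullback, as written, you cannot feed into Lemma \ref{lemma: degeneration H^3 H^1_0}, and the argument stalls at the boundary. The fix is simply to replace $\tilde u_\eps$ by $u_\eps|_\Omega$ for the limit extraction, at which point the rest of your plan goes through and coincides with the paper's proof.

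For case $(ii)$, you propose an explicit corrector ansatz $u_\eps = \tilde u + \eps^{5/2}\chi_\eps\, V(\bar x/\eps, x_N/\eps)\,\psi(\bar x)$ together with a $\Gamma$-$\liminf$ bound. This is a legitimate alternative, but it is not what the paper does: the paper works directly on the resolvent equation $A_{\Omega_\eps}v_\eps = f_\eps$, splits the integral over $\Omega_\eps$ and isolates the band $Q_\eps=\widehat W_\eps\times(-\eps,0)$ (Theorem \ref{thm: macroscopic limit weak}), then derives the cell problem \eqref{def: V} by testing with oscillatory functions $\psi_\eps=\eps^{5/2}\psi(\bar x,\bar x/\eps,x_N/\eps)$ (the microscopic limit, Theorem \ref{thm: conditions on v weak}), and finally identifies $\hat v = -V\,\partial_{x_N}v(\cdot,0)+a\,y_N^2$ (Theorem \ref{thm: chara strange term weak}) and computes $K_1$ via the Triharmonic Green Formula. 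This limit-identification route bypasses the explicit recovery sequence and the $\liminf$ step entirely, and is tailored to resolvent convergence via Condition (C) rather than to $\Gamma$-convergence of forms. Your ansatz approach would be more work (you would still need the two-scale compactness and decay argument for the unbounded cylinder $Y\times(-\infty,0)$, which you yourself flag as the hard part), so while not wrong, it does not reproduce the paper's argument and you should not claim it to be "essentially the same approach."
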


The compact convergence $\Ccon$ in the previous theorem is defined in Definition \ref{def: E C conv}. \\
The novelty of Theorem \ref{thm: spectral conv tri weak} lies in the identification of the double instability effect, namely a first degeneration to SBC when $\alpha \in (\frac{3}{2}, \frac{5}{2})$ and a further degeneration to Dirichlet when $\alpha \leq 1$. We immediately give a proof of item $(i)$:

\begin{proof}[Proof of Thm \ref{thm: spectral conv tri weak}(i)]
The follows from \cite[Theorem 4]{FerLamb}, with $m = 3$, $k = 1$.
\end{proof}

\begin{remark}
The results in Theorem \ref{thm: spectral conv tri weak} can be easily generalised to the case where $\Omega$ has a piecewise flat boundary,
$\Omega_\eps$, $\Omega$ belong to the same atlas class in the sense of \cite[Definition 2.4]{Burla} for all $\eps >0$, and have an oscillating boundary locally described by \eqref{eq: geometric setting}. Indeed, if $V$ is one of the chart in the common atlas class, using a partition of unity we can directly assume that
\[
\Omega_\eps \cap V = \{ (\bar{x}, x_N) \in \R^N : \bar{x} \in W, \, -1 < x_N < g_{V,\epsilon}(\bar{x}) \}.
\]
It is clear that if we allow $\alpha > 0$ to be chart dependent, we may find a limiting boundary value problem with mixed boundary conditions.
Nevertheless, the passage to the limit can be treated locally exactly as in Theorem \ref{thm: spectral conv tri weak}. As an
example, assume that the sequence of open sets $(\Omega_\eps)_{\eps>0}$ has a common atlas given by three charts $V_1$, $V_2$ and $V_3$. Then
up to a possible rotation and translation
\[
\Omega_\eps \cap V_1 = \{(\bar{x}, x_N) \in \R^N : \bar{x} \in W, \, -1 < x_N < \eps^{\alpha_1} b_1(\bar{x}/\eps)\},
\]
\[
\Omega_\eps \cap V_2 = \{(\bar{x}, x_N) \in \R^N : \bar{x} \in W, \, -1 < x_N < \eps^{\alpha_2} b_2(\bar{x}/\eps)\},
\]
\[
\Omega_\eps \cap V_3 = \Omega \cap V_3, \quad \eps > 0,
\]
with $\alpha_1 > 5/2$, $\alpha_2 \leq 1$. Then the limiting boundary value problem in $\Omega$ will be in the form
\[
\begin{cases}
-\Delta^3 u + u = \la u, \quad &\textup{in $\Omega$,} \\
(WBC), \quad &\textup{in $\Gamma_1 \cup \Gamma_3$,} \\
(DBC), \quad &\textup{in $\Gamma_2$,}
\end{cases}
\]
where $\partial \Omega = \Gamma_1 \cup \Gamma_2 \cup \Gamma_3$, $\Gamma_j$ being the boundary of $\Omega$ inside $V_j$, $j = 1,2,3$.
\end{remark}

\begin{remark} \label{rem: open}
The case $\alpha \in (1,3/2)$ in Theorem \ref{thm: spectral conv tri weak} remains at the moment open. The proof of Theorem \ref{thm: spectral conv tri weak} seems to suggest that $\alpha = 3/2$ is not a critical threshold; in other words, we do not expect degeneration to the Dirichlet problem at $\alpha = 3/2$. The main difficulty is that the derivatives of $T_\eps \varphi$, $\varphi \in L^2(\Omega)$, where $T_\eps$ is the pullback operator defined in \eqref{def: T_eps} have singularities that are balanced by neither the shrinking of the set $\Omega_\eps$ when $\alpha \in (1, 3/2)$, nor by the vanishing of the traces of the eigenfunctions at the boundary. The construction of a more efficient extension operator $T_\eps : H^3(\Omega) \cap H^1_0(\Omega) \to H^3(\Omega_\eps) \cap H^1_0(\Omega_\eps)$ is however even more challenging: note that the classically used Sobolev extension operators do not work here since they do not preserve the boundary conditions.
\end{remark}

\section{Auxiliary results} \label{sec: auxil}

\noindent \textbf{ $\bullet$ A diffeomorphism between $\Omega$ and $\Omega_\eps$.}\\
Let us define a diffeomorphism $\Phi_\epsilon$ from $\Omega_\eps$ to $\Omega$ by
\begin{equation}
\label{def: Phi_eps}
\Phi_\eps( \bar{x}, x_N) = (\bar{x}, x_N - h_\eps(\bar{x}, x_N)), \quad \textup{for all $x=(\bar{x}, x_N) \in \Omega_\eps$,}
\end{equation}
where $h_\epsilon$ is defined by
\[
h_\epsilon (\bar{x}, x_N) =
\begin{cases}
0, &\textup{if $-1 \leq x_N \leq -\epsilon$},\\
g_\epsilon(\bar{x})\Big(\frac{x_N+\epsilon}{g_\epsilon(\bar{x})+\epsilon}\Big)^{4}, &\textup{if $-\epsilon \leq x_N \leq
g_\epsilon(\bar{x})$}.
\end{cases}
\]
By standard calculus one can prove the following

\begin{lemma}
\label{lemma: h_eps}
The map $\Phi_\epsilon$ is a diffeomorphism of class $C^3$ and there exists a constant
$c > 0$ independent of $\epsilon$ such that
$
\abs{h_\epsilon} \leq c \epsilon^\alpha$ and $\left \lvert D^l h_\epsilon \right \rvert \leq c \epsilon^{\alpha - l}$,
for all $l= 1, \dots, 3$, $\epsilon > 0$ sufficiently small.
\end{lemma}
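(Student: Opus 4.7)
My plan is to treat the three assertions of the lemma independently: (a) the $C^3$ regularity of $h_\eps$, (b) the bijectivity/diffeomorphism property of $\Phi_\eps$, and (c) the derivative bounds. The key factorisation that makes all three parts tractable is to write $h_\eps = g_\eps(\bar{x})\, s^4$ with $s = (x_N+\eps)/(g_\eps(\bar{x})+\eps) \in [0,1]$ throughout the oscillating layer $-\eps \leq x_N \leq g_\eps(\bar{x})$.

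For (a), the only nontrivial point is matching the two pieces at the interface $x_N = -\eps$; since the upper branch carries the factor $(x_N+\eps)^4$, all $x_N$-derivatives up to order three vanish there, and tangential $\bar{x}$-differentiation acts only through $g_\eps$ without removing the vanishing power of $(x_N+\eps)$. Hence all mixed partials of order $\leq 3$ agree with the zero function from below. Interior smoothness on each open piece is immediate from $b \in C^4(W)$ and $g_\eps+\eps > 0$.

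For (c), I would apply the Leibniz and chain rules together with the three elementary facts $|D^k_{\bar{x}} g_\eps| \leq C \eps^{\alpha - k}$ (from $g_\eps(\bar{x})=\eps^\alpha b(\bar{x}/\eps)$ with $b \in C^4$), $(g_\eps+\eps)^{-1} \leq \eps^{-1}$, and $0 \leq s \leq 1$. Every term in $D^l h_\eps$ is then a product whose $\eps$-exponents sum to $\alpha - l$, producing the desired $|D^l h_\eps| \leq C\eps^{\alpha - l}$; the bound $|h_\eps| \leq C\eps^\alpha$ is immediate from $|h_\eps| \leq g_\eps$.

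For (b), the Jacobian matrix of $\Phi_\eps$ is lower triangular with determinant $1 - \partial_{x_N} h_\eps = 1 - 4 g_\eps s^3/(g_\eps+\eps)$, whose maximum over the oscillating layer is $4g_\eps/(g_\eps+\eps)$, attained at $x_N = g_\eps(\bar{x})$. Strict positivity thus reduces to $3 g_\eps < \eps$, equivalently $3\|b\|_\infty\eps^{\alpha-1}<1$; this is the genuinely delicate point and the main obstacle, as it forces an implicit smallness assumption on $\eps$ (depending on $\alpha$ and $\|b\|_\infty$). Once strict positivity is secured, each slice map $x_N \mapsto x_N - h_\eps(\bar{x},x_N)$ is $C^3$ and strictly increasing, and the explicit boundary values $\Phi_\eps(\cdot,-1)=(\cdot,-1)$, $\Phi_\eps(\cdot,-\eps)=(\cdot,-\eps)$ and $\Phi_\eps(\cdot, g_\eps(\bar{x}))=(\cdot,0)$ identify its image with $[-1,0]$; combining the slices and invoking part (a) then delivers the global $C^3$ diffeomorphism.
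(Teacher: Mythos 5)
The paper offers no proof of this lemma, stating only that it follows ``by standard calculus,'' so there is no argument to compare against; your plan is a sound and essentially complete strategy, and the factorisation $h_\eps = g_\eps\, s^4$ with $s \in [0,1]$ is exactly the right organising device for all three claims. Two refinements are worth making explicit.

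First, on the diffeomorphism condition: you correctly reduce $\det D\Phi_\eps > 0$ to $3g_\eps < \eps$, i.e.\ $3\|b\|_\infty \eps^{\alpha-1} < 1$. But this should be read as a genuine restriction to $\alpha > 1$, not merely a smallness condition on $\eps$: for $\alpha < 1$ one has $\eps^{\alpha - 1}\to\infty$, so $1-\partial_{x_N}h_\eps \to 1 - 4 = -3$ near the upper boundary and \emph{no} choice of small $\eps$ rescues the sign; for $\alpha = 1$ one needs the extraneous hypothesis $\|b\|_\infty < 1/3$. This is consistent with the paper, which invokes $T_\eps$ (and hence $\Phi_\eps$) only in the regimes $\alpha \geq 3/2$; for $\alpha \le 1$ it switches to an extension-by-zero operator in the proof of Theorem 1(iv). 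It would be cleaner to state the restriction $\alpha > 1$ rather than fold it into ``$\eps$ sufficiently small.''

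Second, a small imprecision in (c): you claim that Leibniz plus the three facts $|D^k_{\bar x}g_\eps|\le C\eps^{\alpha-k}$, $(g_\eps+\eps)^{-1}\le\eps^{-1}$, $0\le s\le 1$ forces every term's $\eps$-exponents to sum to $\alpha - l$. Writing $h_\eps=(x_N+\eps)^4 g_\eps(g_\eps+\eps)^{-4}$ and tracking a generic term arising from Fa\`a di Bruno applied to $(g_\eps+\eps)^{-4}$, the exponent one actually gets is $\alpha - l + \mu(\alpha-1)$, where $\mu\ge 0$ counts the derivatives that land on the denominator. This is $\ge \alpha - l$ (hence harmless) precisely when $\alpha\ge 1$; for $\alpha<1$ the bound would require the additional fact $g_\eps \ge \eps^\alpha \min_Y b>0$ (from the positivity of $b$). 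Since the diffeomorphism claim already forces $\alpha>1$, this gap is moot in practice, but your ``exponents sum exactly to $\alpha-l$'' should be softened to ``sum to at least $\alpha-l$ once $\alpha\ge 1$.'' With those two adjustments the plan is complete and correct.
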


We then introduce the pullback operator
\begin{equation}
\label{def: T_eps}
T_\epsilon : L^2(\Omega) \to L^2(\Omega_\epsilon), \quad T_\epsilon u = u \circ \Phi_\epsilon
\end{equation}
for all $u\in L^2(\Omega)$.\\
\ \, \\

\noindent \textbf{$\bullet$ Unfolding method.}\\
We recall the following notation and results from \cite{FerLamb} regarding the unfolding method. For any $k \in \Z^{N-1}$ and $\eps > 0$ we define
\begin{equation}
\label{def: anistropic unfolding cell}
\left\{
\begin{aligned}
&C^k_\eps = \eps k + \eps Y,\\
&I_{W,\eps} = \{k \in \Z^{N-1} : C^k_\eps \subset W\},\\
&\widehat{W}_\eps = \bigcup_{k \in I_{W, \eps}} C^k_\eps.
\end{aligned}
\right.
\end{equation}

\begin{definition}
\label{def: anisotropic unfolding}
Let $u$ be a real-valued function defined in $\Omega$. For any $\epsilon > 0$ sufficiently small the unfolding $\hat{u}$ of $u$ is the
real-valued function defined on $\widehat{W}_\epsilon \times Y \times (-1/\epsilon, 0)$ by
\[
\hat{u}(\bar{x}, \bar{y}, y_N) = u\Big( \epsilon\Big[\frac{\bar{x}}{\epsilon}\Big] + \epsilon \bar{y}, \epsilon y_N \Big),
\]
for almost all $(\bar{x}, \bar{y}, y_N)) \in \widehat{W}_\epsilon \times Y \times (-1/\epsilon, 0)$, where
$[\bar{x}\epsilon^{-1}]$ denotes the integer part of the vector $\bar{x} \epsilon^{-1}$ with respect to $Y$, i.e., $[\bar{x}
\epsilon^{-1}] = k$ if and only if $\bar{x} \in C^k_\epsilon$.
\end{definition}

The following lemma will be often used in the sequel. For a proof we refer to  \cite[Proposition 2.5(i)]{CioDamGri2}.

\begin{lemma}
\label{lemma: exact int formula}
Let $a \in [-1,0[$ be fixed. Then
\begin{equation}
\label{eq: exact int formula}
\int_{\widehat{W}_\eps \times (a,0)} u(x) dx = \eps \int_{\widehat{W}_\eps \times Y \times (a/\eps,0)} \hat{u}(\bar{x},y) d\bar{x} dy
\end{equation}
for all $u \in L^1(\Omega)$ and $\eps>0$ sufficiently small. Moreover
\begin{equation*}
\int_{\widehat{W}_\eps \times (a,0)} \left|\frac{\partial^l u(x)}{\partial x_{i_1} \cdots \partial x_{i_l}} \right|^2 dx = \eps^{1-2l}
\int_{\widehat{W}_\eps \times Y \times (a/\eps,0)} \left| \frac{\partial^l \hat{u}}{\partial y_{i_1} \cdots \partial y_{i_l}}(\bar{x},y)
d\bar{x} \right|^2dy,
\end{equation*}
for all $l \leq 3$,  $u \in H^{3}(\Omega)$ and $\eps>0$ sufficiently small.
\end{lemma}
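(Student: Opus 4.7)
The approach is to prove the first formula by a direct change of variables on each cube $C^k_\eps$ in the decomposition of $\widehat{W}_\eps$, and then deduce the second (Sobolev) identity from the first by applying it to $|\partial^l u/\partial x_{i_1} \cdots \partial x_{i_l}|^2$ combined with an explicit chain-rule computation of the derivatives of $\hat{u}$.

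For the first identity, I would start from the right-hand side and exploit the defining feature of the anisotropic unfolding: on each cube $C^k_\eps$ one has $\eps [\bar{x}/\eps] = \eps k$, so
\[
\hat{u}(\bar{x}, \bar{y}, y_N) = u(\eps k + \eps \bar{y}, \eps y_N), \quad \bar{x} \in C^k_\eps,
\]
is independent of $\bar{x}$. The integral in $\bar{x}$ over $C^k_\eps$ therefore only contributes the factor $|C^k_\eps| = \eps^{N-1}$. The linear change of variables $\bar{z} = \eps k + \eps \bar{y}$, $z_N = \eps y_N$ (with Jacobian $\eps^N$) then sends $Y \times (a/\eps, 0)$ bijectively onto $C^k_\eps \times (a, 0)$, yielding an overall factor $\eps \cdot \eps^{N-1} \cdot \eps^{-N} = 1$. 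Summing over $k \in I_{W, \eps}$ and using the definition $\widehat{W}_\eps = \bigcup_k C^k_\eps$ reconstructs the left-hand side.

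The second identity follows by tracking how derivatives transform under the unfolding. Differentiating the formula above with respect to the inner variables gives, for each multi-index of length $l$,
\[
\frac{\partial^l \hat{u}}{\partial y_{i_1} \cdots \partial y_{i_l}}(\bar{x}, \bar{y}, y_N) = \eps^l \frac{\partial^l u}{\partial x_{i_1} \cdots \partial x_{i_l}}(\eps k + \eps \bar{y}, \eps y_N),
\]
so that the square of the $y$-derivative of $\hat{u}$ equals $\eps^{2l}$ times the unfolding of the square of the $x$-derivative of $u$. Applying the first identity to $v := |\partial^l u / \partial x_{i_1} \cdots \partial x_{i_l}|^2 \in L^1(\Omega)$ then produces the announced prefactor $\eps \cdot \eps^{-2l} = \eps^{1-2l}$. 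The hypothesis $u \in H^3(\Omega)$ guarantees that the classical derivatives in the chain-rule step exist almost everywhere for $l \leq 3$.

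The only preliminary point to verify is the compatibility of the $y_N$-domains: $\hat{u}$ is defined on $\widehat{W}_\eps \times Y \times (-1/\eps, 0)$, so one must check that $(a/\eps, 0) \subset (-1/\eps, 0)$, which is immediate from $a \geq -1$. Beyond this, no genuine obstacle is expected: the proof is essentially a bookkeeping argument based on translation-invariance of Lebesgue measure on each cube followed by a linear rescaling, and the main utility of writing out the plan is to record the power-counting $\eps \cdot \eps^{N-1} \cdot \eps^{-N} \cdot \eps^{-2l} = \eps^{1-2l}$ that produces the announced exponent.
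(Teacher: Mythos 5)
Your argument is correct, and it is worth noting that the paper does not prove this lemma at all: it simply cites \cite[Proposition 2.5(i)]{CioDamGri2}, so your self-contained change-of-variables computation is precisely the standard proof of that unfolding property. The three key observations are all in place: $\hat{u}$ is constant in $\bar{x}$ on each cell $C^k_\eps$, so the $\bar{x}$-integration only contributes the factor $\eps^{N-1}$; the affine substitution $\bar{z}=\eps k+\eps\bar{y}$, $z_N=\eps y_N$ has Jacobian $\eps^N$, giving the net factor $\eps\cdot\eps^{N-1}\cdot\eps^{-N}=1$ for \eqref{eq: exact int formula}; and $\partial^l_y\hat{u}=\eps^l\,\widehat{\partial^l_x u}$, so applying the first identity to $|\partial^l_x u|^2$ yields the prefactor $\eps\cdot\eps^{-2l}=\eps^{1-2l}$, exactly as you state in the body of your derivation. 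Two small blemishes, neither of which affects the proof: first, your closing ``power-counting'' line $\eps\cdot\eps^{N-1}\cdot\eps^{-N}\cdot\eps^{-2l}$ evaluates to $\eps^{-2l}$, not $\eps^{1-2l}$; it double-counts the measure factors already balanced in the first identity, and the correct count is the one you gave earlier (the explicit $\eps$ of \eqref{eq: exact int formula} times $\eps^{-2l}$). Second, the justification of the derivative formula should be the chain rule for \emph{weak} derivatives under an affine invertible change of variables (valid for any $u\in H^3(\Omega)$), rather than an appeal to classical derivatives existing almost everywhere, which is neither needed nor the natural hypothesis here.
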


Let $H^3_{{\rm Per}_Y, {\rm loc}}(Y \times (-\infty,0))$ be the subspace of $H^3_{\rm loc}(\R^{N-1} \times (-\infty,0))$ containing
$Y$-periodic functions in the first $(N-1)$ variables $\bar{y}$. We then define $H^3_{loc}(Y \times (-\infty,0))$ to be the space of
functions in
$H^3_{{\rm Per}_Y, {\rm loc}}(Y \times (-\infty,0))$ restricted to $Y \times (-\infty,0)$. Finally we set
\begin{multline}
w^{3,2}_{{\rm Per}_Y}(Y \times (-\infty, 0)) := \big\{ u \in H^3_{{\rm Per}_Y, {\rm loc}}(Y \times (-\infty,0))\\
: \norma{D^\gamma u}_{L^2(Y \times (-\infty,0))} < \infty, \forall |\gamma| = 3 \big\}.
\end{multline}
For any $d<0$, let $\mathcal{P}_{hom,y}^{l}(Y \times (d,0))$ be the space of homogeneous polynomials of degree at most $l$ restricted to the
domain $(Y \times (d,0))$. Let $\eps > 0$ be fixed. We define the projectors $P_i$ from $L^2(\widehat{W}_\eps, H^3(Y \times (-1/\eps ,
0)))$ to $L^2(\widehat{W}_\eps, \mathcal{P}_{hom,y}^{i}(-1/\eps, 0))$ by setting
\begin{equation*}
P_i (\psi )= \sum_{|\eta|=i} \int_{Y} D^\eta \psi(\bar{x},\bar{\zeta},0) d\bar{\zeta} \frac{y^\eta}{\eta!}
\end{equation*}
for all $i = 0, 1,2$. We now set $Q_{2} = P_{2}$, $Q_{1} = P_{1} (\mathbb{I} - Q_{1})$,$Q_{0} =
P_0\big(\mathbb{I} - \sum_{j=1}^{2} Q_j\big)$. Note that $Q_{3-j}$, $j=1, \dots, 3$ is a projection on the space of homogeneous  polynomials
of degree $3-j$, with the property that $Q_{3-k} (p) = 0$ for all polynomials $p$ of degree  $3-k$ with $k\ne j$. We finally set
\begin{equation}
\label{def: p}
\p= Q_0 + Q_1 + Q_2,
\end{equation}
which is a projector on the space of polynomials in $y$ of degree at most $2$. Note that $D_y^\beta \p(\psi )(\bar{x}, \bar{y}, 0) =
\int_{Y}D_y^\beta\psi (\bar{x}, \bar{y}, 0) d\bar y$ for all  $|\beta| = 0,\dots, 2$.

\begin{lemma}
\label{lemma: unfolding convergence poly}
The following statements hold:
\begin{enumerate}[label=(\roman*)]
\item Let $v_\epsilon \in H^3(\Omega)$ with $\norma{\hat {v_\epsilon}}_{H^3(\Omega)} \leq M$, for all $\epsilon > 0$. Let
    $V_\epsilon$ be defined by
\[
\begin{split}
V_\epsilon(\bar{x}, y) =  & \hat{v_\epsilon}(\bar{x}, y) - \p(v_\epsilon)(\bar{x}, y),
\end{split}
\]
for $(\bar{x}, y) \in \widehat{W_\epsilon}\times Y\times (-1/\epsilon, 0)$, where $\p$ is defined by \eqref{def: p} . Then there exists a
function $\hat{v}\in L^2(W, w^{3,2}_{\textup{Per}_Y}(Y\times (-\infty,0)))$ such that for every $d<0$
\begin{enumerate}[label=(\alph*)]
\item $\frac{D_y^{\gamma}V_\epsilon}{\epsilon^{5/2}} \rightharpoonup D_y^{\gamma}\hat{v}$ in $L^2(W\times Y \times (d,0))$ as $\epsilon
    \to 0$, for any $\gamma \in \N_0^N$, $\abs{\gamma} \leq 2$.
\item $\frac{D_y^{\gamma}V_\epsilon}{\epsilon^{5/2}} \rightharpoonup D_y^{\gamma}\hat{v}$ in $L^2(W\times Y \times (-\infty,0))$ as
    $\epsilon \to 0$, for any $\gamma \in \N_0^N$, $\abs{\gamma} = 3$,
\end{enumerate}
where it is understood that the functions $V_\epsilon, D_y^{\gamma}V_\epsilon$ are extended by zero to the whole of $W\times Y \times
(-\infty,0)$ outside their natural domain of definition $\widehat{W_\epsilon}\times Y\times (-1/\epsilon, 0) $.
\item If $\psi\in W^{1,2}(\Omega)$, then $\lim_{\eps \to 0} \widehat{(T_\epsilon \psi)_{|\Omega}} =  \psi(\bar{x},0)$ in $L^2(W \times Y
    \times (-1,0))$.
\end{enumerate}
\end{lemma}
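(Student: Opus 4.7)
The plan is to handle the two parts of the lemma separately, treating (i) as the main content and (ii) as a more routine density argument.

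For part (i), the idea is to translate the uniform $H^3$-bound on $v_\eps$ into scaling bounds on the derivatives of $V_\eps$ via Lemma \ref{lemma: exact int formula}, and then extract weakly convergent subsequences by Banach--Alaoglu. I would first observe that $\p(v_\eps)$ is a polynomial in $y$ of degree at most $2$, so $D_y^\gamma \p(v_\eps) = 0$ when $|\gamma| = 3$. Combined with Lemma \ref{lemma: exact int formula} this yields
\[
\norma{D_y^\gamma V_\eps}_{L^2(\widehat{W}_\eps \times Y \times (-1/\eps, 0))}^2 = \norma{D_y^\gamma \hat{v}_\eps}_{L^2}^2 = \eps^{5} \norma{D^\gamma v_\eps}_{L^2(\Omega)}^2 \leq M^2 \eps^{5},
\]
which, after extending $V_\eps$ by zero, delivers statement (b) for $|\gamma|=3$.

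For $|\gamma| \leq 2$ the key ingredient is a Poincar\'e-type inequality. The construction of $\p$ ensures, via the identity $D_y^\beta \p(\psi)(\bar{x}, \bar{y}, 0) = \int_Y D_y^\beta \psi(\bar{x}, \bar{\zeta}, 0) \diff\bar{\zeta}$ recorded just before the lemma, that
\[
\int_Y D_y^\beta V_\eps(\bar{x}, \bar{y}, 0) \diff\bar{y} = 0, \qquad |\beta| \leq 2,\ \bar{x} \in \widehat{W}_\eps.
\]
For each fixed $d<0$, I would then prove the slice-wise inequality
\[
\norma{f}_{H^2(Y \times (d,0))} \leq C_d \norma{D^3 f}_{L^2(Y \times (d,0))}
\]
valid for every $\bar{y}$-periodic $f$ satisfying the above zero-mean conditions at $y_N=0$. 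A compactness-and-contradiction argument works: a minimising sequence $(f_n)$ with $\norma{f_n}_{H^2}=1$ and $\norma{D^3 f_n}_{L^2}\to 0$ is bounded in $H^3$, hence by Rellich admits an $H^2$-strongly convergent subsequence whose limit is a polynomial necessarily of the form $a+b\,y_N+c\,y_N^2$ by $\bar{y}$-periodicity; the three nontrivial zero-mean conditions (pure $y_N$-derivatives of orders $0,1,2$) force $a=b=c=0$, contradicting $\norma{f}_{H^2}=1$. Applying this inequality slicewise in $\bar{x}$, estimating the right-hand side by the global bound on $\norma{D^3 V_\eps(\bar{x},\cdot,\cdot)}_{L^2(Y \times (-1/\eps,0))}$, integrating over $\widehat{W}_\eps$, and extending by zero delivers the uniform $L^2(W\times Y\times(d,0))$-bound on $\eps^{-5/2} D_y^\gamma V_\eps$.

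Once the uniform bounds are in place, Banach--Alaoglu (with a diagonal argument across $d=-1,-2,\ldots$) produces a weakly convergent subsequence of $\eps^{-5/2} V_\eps$ with some limit $\hat{v}$; the standard distributional calculus identifies the weak limits of $\eps^{-5/2} D_y^\gamma V_\eps$ with $D_y^\gamma \hat{v}$, so $\hat{v}$ inherits the required integrability. The $Y$-periodicity of $\hat{v}$ in $\bar{y}$ is preserved in the limit by the standard compatibility of the anisotropic unfolding with periodic extensions. The main obstacle is the slice-wise Poincar\'e step, where one must verify that the zero-mean conditions combined with the $\bar{y}$-periodicity are exactly sufficient to eliminate every polynomial mode in $y$ of degree at most $2$.

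For part (ii), I would first treat smooth $\psi \in C^\infty(\overline{\Omega})$. By the definitions of the unfolding and of $\Phi_\eps$,
\[
\widehat{(T_\eps \psi)_{|\Omega}}(\bar{x}, \bar{y}, y_N) = \psi\bigl(\eps[\bar{x}/\eps] + \eps \bar{y},\ \eps y_N - h_\eps(\eps[\bar{x}/\eps] + \eps \bar{y}, \eps y_N)\bigr).
\]
Lemma \ref{lemma: h_eps} gives $|h_\eps|\leq C\eps^\alpha$, so for $y_N \in (-1,0)$ the argument of $\psi$ converges uniformly to $(\bar{x},0)$ as $\eps\to 0$, yielding pointwise and (by dominated convergence) $L^2$ convergence to $\psi(\bar{x},0)$. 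For general $\psi \in W^{1,2}(\Omega)$ I would approximate by smooth $\phi$ in the $W^{1,2}$-norm and bound the error: the elementary trace-plus-fundamental-theorem-of-calculus inequality $\norma{\psi-\phi}_{L^2(W\times(-\eps,0))}^2 \leq C\eps \norma{\psi-\phi}_{W^{1,2}(\Omega)}^2$, combined with Lemma \ref{lemma: exact int formula}, shows that $\norma{\widehat{T_\eps(\psi-\phi)_{|\Omega}}}_{L^2(W\times Y\times(-1,0))} \leq C\norma{\psi-\phi}_{W^{1,2}(\Omega)}$ uniformly in $\eps$, and a standard three-epsilon argument concludes.
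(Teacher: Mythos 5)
A preliminary remark: the paper itself gives no proof of this lemma — it is recalled in Section \ref{sec: auxil} from \cite{FerLamb} (see also the analogous biharmonic statements in \cite{ArrLamb}), where the argument is precisely the scheme you propose: the exact integration formula to obtain the $\eps^{5/2}$ scaling of the third-order derivatives, the projection $\p$ to remove the polynomial modes, a Poincar\'e-type inequality on $Y\times(d,0)$, and weak compactness plus a diagonal extraction. Your part (ii) is also handled the standard way and is correct (uniform convergence of the arguments for smooth $\psi$ via Lemma \ref{lemma: h_eps}, then the $\eps^{-1}$ unfolding identity against the $C\eps$ thin-strip estimate to get a uniform bound for the density step).

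There are, however, two genuine weak points in your part (i). First, your slice-wise Poincar\'e inequality is stated for $\bar y$-\emph{periodic} $f$, and in the compactness-contradiction step you use periodicity to reduce the kernel to $a+by_N+cy_N^2$ and then invoke only the three pure $y_N$-derivative mean conditions. But the functions you apply it to, $V_\eps(\bar x,\cdot,\cdot)$, are \emph{not} periodic in $\bar y$: for $\bar x\in C^k_\eps$ the unfolded function is just a rescaled copy of $v_\eps$ on that single cell, and its values on opposite faces of $Y$ differ in general, so the inequality as stated does not apply. The fix is available from what you already proved: drop periodicity and use \emph{all} the conditions $\int_Y D^\beta_y V_\eps(\bar x,\bar y,0)\,d\bar y=0$, $|\beta|\le 2$, guaranteed by the construction of $\p$; these annihilate every polynomial of degree at most $2$ (the second derivatives are constants equal to their zero averages, hence zero, then the first derivatives, then the constant term), so the contradiction argument closes without any periodicity hypothesis. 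Second, the $Y$-periodicity of the limit $\hat v$ is part of the statement ($\hat v\in L^2(W,w^{3,2}_{{\rm Per}_Y}(Y\times(-\infty,0)))$), and your one-line appeal to ``standard compatibility of the anisotropic unfolding with periodic extensions'' is not a proof: precisely because the $V_\eps$ are not periodic at finite $\eps$, periodicity of the limit is a theorem in the unfolding literature (cf.\ \cite{CioDamGri}, \cite{ArrLamb}), proved by comparing the unfolded functions across adjacent cells — where $\hat v_\eps(\bar x,\bar y+e_i,y_N)$ and $\hat v_\eps(\bar x+\eps e_i,\bar y,y_N)$ encode the same physical values — and showing that the traces of the limit on opposite faces of $Y$ coincide. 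Without at least a sketch of that matching argument (or an explicit reduction to the corresponding lemma of \cite{FerLamb}/\cite{ArrLamb}), your proof of (i) is incomplete at exactly the point where the conclusion $\hat v\in L^2(W,w^{3,2}_{{\rm Per}_Y})$ is claimed.
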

\ \, \\

\noindent \textbf{$\bullet$ Tangential Calculus.}\\
Recall now the following standard definitions of the tangential differential operators.  We refer to \cite[Chapter 9]{DelZol} for details and
further information. Given $A \subset \R^N$ let $d_A$ be the Euclidean distance function from $A$, defined by $d_A(x) = \inf_{y \in A} |x-y|$.
We define the oriented distance function $b_A$ from $A$ by
\[
b_A(x) = d_A(x) - d_{A^C}(x),
\]
for all $x \in \R^N$. Let now $\Omega$ be an bounded open set of class $C^2$. In this case $b_\Omega$ coincides with the signed distance from
$\partial \Omega$. It is well-known that there exists $h > 0$ and a tubular neighbourhood $S_{2h}(\partial \Omega)$ of radius $h$ such that
$b_\Omega \in C^2(S_{2h}(\partial \Omega))$, see \cite{MR110078}. We define the projection of a point $x$ to $\partial \Omega$ by
\begin{equation}
\label{def: p orth}
p(x) = x - b_{\Omega}(x) \nabla b_{\Omega}(x),
\end{equation}
for all $x \in S_{2h}(\partial \Omega)$. If $f \in C^0(\partial \Omega)$ we write $(f)_{\partial \Omega} = (f \circ p)|_{\partial \Omega}$.

\begin{definition}
\label{tangential gradient}
Let $\Omega$ be an bounded open set of class $C^2$ and let $h>0$ be such that $b_\Omega \in C^2(S_{2h}(\partial \Omega))$. Let $f \in
C^1(\partial \Omega)$ and let $F \in C^1(S_{2h}(\partial \Omega))$ be a $C^1$ extension of $f$ to $S_{2h}(\partial \Omega)$ (that is,
$F|_{\partial \Omega} = f$). We define the tangential gradient of $f$ on $\partial \Omega$ by
\[
\nabla_{\partial \Omega} f = \nabla F|_{\partial \Omega} - \frac{\partial F}{\partial n} n.
\]
\end{definition}
%

\begin{definition} \label{tang operators}
Let $N \geq 1$, $v \in C^1(\partial \Omega)^N$. We define the tangential Jacobian matrix of $v$ by $D_{\partial \Omega} v = D(v \circ
p)|_{\partial \Omega}$ and the tangential divergence of $v$ by $\Div_{\partial \Omega} (v \circ p)|_{\partial \Omega} = {\rm tr} ( D_{\partial
\Omega} v )$. Assume now $\Omega$ is of class $C^3$ and $f \in C^2(\partial \Omega)$. We define the Laplace-Beltrami operator of $f$ by
\[
\Delta_{\partial \Omega} f = \Delta(f \circ p)|_{\partial \Omega} = \Div_{\partial \Omega}(\nabla_{\partial \Omega} f),
\]
and similarly we define the tangential Hessian matrix by $D^2_{\partial \Omega} f = D_{\partial \Omega}(\nabla_{\partial \Omega} f)$.
\end{definition}
%
%
%
%

We conclude this section recalling the following important
\begin{theorem}[Tangential Divergence Theorem]
\label{thm: tangential div thm}
Let $\Omega$ be a bounded open set of class $C^2$ and let $v \in C^1(\partial \Omega)^N$. Let $\mathcal{H}$ be the trace of the second
fundamental form of $\partial \Omega$.  Then
\begin{equation}
\label{Tangential Divergence Thm}
\int_{\partial \Omega} \Div_{\partial \Omega}\! v \,\,dS = \int_{\partial\Omega} \mathcal{H}\, (v \cdot n) \, dS.
\end{equation}
Let $f \in C^1(\partial \Omega)$. Then
\begin{equation}
\label{Tangential Green's formula}
\int_{\partial \Omega} (f \Div_{\partial \Omega}\! v + \nabla_{\partial \Omega}f \cdot v) \, dS = \int_{\partial \Omega} \mathcal{H}\, f \, (v
\cdot n)\, dS.
\end{equation}
\end{theorem}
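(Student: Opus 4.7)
The plan is to prove \eqref{Tangential Divergence Thm} first and derive \eqref{Tangential Green's formula} from it by the product rule, by extending $v$ normally to a tubular neighbourhood and separating it into tangent and normal components. The crucial observation is that the normal extension $V := v \circ p$, defined on $S_{2h}(\partial\Omega)$, is constant along the normal lines of $\partial\Omega$; hence $DV \cdot n = 0$ on $\partial\Omega$, so $\operatorname{tr}(DV)|_{\partial\Omega} = \Div_{\partial\Omega} v$ coincides with the restriction to $\partial\Omega$ of the ambient divergence of $V$.

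\textbf{Curvature identity.} Apply this observation to $n$ itself, which extends smoothly as $\nabla b_\Omega$ on $S_{2h}(\partial\Omega)$ with $|\nabla b_\Omega|^2 \equiv 1$. Differentiating the eikonal identity gives $D^2 b_\Omega \cdot \nabla b_\Omega = 0$, so the tangential Jacobian $D_{\partial\Omega} n = D^2 b_\Omega|_{\partial\Omega}$ has $n$ in its kernel, and its trace equals $\Delta b_\Omega|_{\partial\Omega} = \mathcal{H}$. In particular $\Div_{\partial\Omega} n = \mathcal{H}$.

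\textbf{Decomposition and tangent part.} Write $v = v_T + (v \cdot n) n$ with $v_T$ tangent. The product rule for $\Div_{\partial\Omega}$ (which reduces to the usual one applied to $V$) together with $\Div_{\partial\Omega} n = \mathcal{H}$ gives
\[
\Div_{\partial\Omega}\!\bigl((v \cdot n) n\bigr) = \nabla_{\partial\Omega}(v \cdot n) \cdot n + (v \cdot n)\mathcal{H} = (v \cdot n)\mathcal{H},
\]
since $\nabla_{\partial\Omega}(v\cdot n)$ is tangent and hence orthogonal to $n$. It remains to prove $\int_{\partial\Omega} \Div_{\partial\Omega} v_T \, dS = 0$ for every smooth tangent field $v_T$. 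Choosing pointwise an orthonormal frame $(e_1, \ldots, e_{N-1}, n)$ adapted to $\partial\Omega$ and using that $v_T \circ p$ is constant along normals, one has $\Div_{\partial\Omega} v_T = \sum_{i=1}^{N-1} e_i \cdot \partial_{e_i}(v_T \circ p)$; a short Gauss-formula computation shows this coincides with the intrinsic Riemannian divergence of $v_T$ on $\partial\Omega$ equipped with its induced metric. Since $\partial\Omega$ is a closed (compact, boundaryless) manifold, the standard divergence theorem on Riemannian manifolds yields the required vanishing, completing the proof of \eqref{Tangential Divergence Thm}.

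\textbf{Green's formula and the main obstacle.} Applying \eqref{Tangential Divergence Thm} to the vector field $fv$ and using the product rule $\Div_{\partial\Omega}(fv) = f \Div_{\partial\Omega} v + \nabla_{\partial\Omega} f \cdot v$ immediately yields \eqref{Tangential Green's formula}. The delicate point is the identification, on tangent fields, of the extrinsic $\Div_{\partial\Omega}$ (defined via the normal extension $v_T \circ p$) with the intrinsic Riemannian divergence on $\partial\Omega$; an alternative route bypasses this step by applying the ambient divergence theorem on the inner tubular layer $\{-h < b_\Omega < 0\}$ to the cutoff $\eta(b_\Omega)(v_T \circ p)$ and passing to the limit via the coarea formula, but the frame argument above is more economical.
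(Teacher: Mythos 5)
Your argument is correct, but note that the paper does not prove this theorem at all: its ``proof'' is only the citation to \cite[\S 5.5, Chapter 9]{DelZol}. What you have written is a self-contained version of the standard distance-function proof, and it is consistent with the paper's definitions: you extend along normals via $p$, use $n\circ p=\nabla b_\Omega$ with $|\nabla b_\Omega|=1$ to get $D_{\partial\Omega}n=D^2b_\Omega|_{\partial\Omega}$ and $\Div_{\partial\Omega}n=\Delta b_\Omega|_{\partial\Omega}=\mathcal H$, split $v=v_T+(v\cdot n)n$, and reduce the tangential part to the intrinsic divergence theorem on the closed hypersurface $\partial\Omega$; the Green formula \eqref{Tangential Green's formula} then follows from \eqref{Tangential Divergence Thm} applied to $fv$, where the product rule is legitimate because $\nabla(f\circ p)|_{\partial\Omega}=\nabla_{\partial\Omega}f$ (the extension $f\circ p$ is constant along normals, so its normal derivative vanishes). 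The two places where the real content sits, and which you correctly flag, are (a) the identification of the extrinsic tangential divergence of a tangent field with the intrinsic Riemannian divergence, which needs the Gauss decomposition $D_{e_i}(v_T\circ p)=\nabla^{\partial\Omega}_{e_i}v_T+\mathrm{II}(e_i,v_T)\,n$ together with $D(v_T\circ p)n=0$, and (b) the regularity bookkeeping: $\Omega\in C^2$ gives $b_\Omega\in C^2$ on a tubular neighbourhood, hence $p\in C^1$, $n\circ p\in C^1$, $\mathcal H\in C^0$, $v_T\in C^1$, which is exactly enough both for the pointwise identities and for the divergence theorem on the compact boundaryless $C^2$ manifold $\partial\Omega$ (or, if one prefers, for your alternative tubular-layer/coarea route). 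Compared with simply quoting \cite{DelZol}, your proof makes the role of the oriented distance function and of the curvature term $\mathcal H$ explicit, which fits well with how the paper later uses these formulas in the proof of the Triharmonic Green Formula.
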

\begin{proof}
We refer to \cite[\S5.5 Chapter 9]{DelZol}.
\end{proof}
%

\section{Spectral exactness and spectral stability} \label{sec: spect stab}
Let $(\cH_\eps)_{\eps \in [0,1]}$ be a family of Hilbert spaces. Let $(\cE_{\!\!\eps})_{\eps \in (0,1]}$ be a connecting system for $(\cH_\eps)_{\eps \in [0,1]}$, that is, $\cE_{\!\!\eps} \in L(\cH_0, \cH_\eps)$, $\eps \in (0,1]$, and
\[
\lim_{\eps \to 0} \norma{\cE_{\!\!\eps} u}_{\cH_\eps} = \norma{u}_{\cH_0}
\]
for every $u \in \cH_0$.

We recall the following definitions.

\begin{definition} \label{def: E C conv} Let $(\cH_\eps)_{\eps\in[0,1]}$ and $\cE_{\!\!\eps}$ be as above.
\begin{enumerate}[label =(\roman*)]
\item Let $u_\eps\in   \cH_\eps$, $\eps >0$.  We say that $u_\eps$ $\cE$-converges to $u$ as $\eps \to 0$ if $\norma{u_\eps - \cE_{\!\!\eps} u}_{\cH_\eps} \to 0$ as $\eps \to 0$. We write $u_\eps \overset{\cE}{\longrightarrow} u$.
\item Let $ B_\eps \in L(\cH_\eps)$,  $\eps >0$. We say that $B_\eps$ $\cE\cE$-converges to a linear operator $B_0 \in L(\cH_0)$ if $B_\eps u_\eps \overset{\cE}{\longrightarrow} B_0 u$ whenever $u_\eps \overset{\cE}{\longrightarrow} u\in \cH_0$. We write $B_\eps \overset{\cE\cE}{\longrightarrow} B_0$.
\item Let $ B_\eps \in L(\cH_\eps)$, $  \eps >0$. We say that $B_\eps$ compactly converges to $B_0 \in L(\cH_0)$ (and we write $B_\eps \Ccon B_0$) if the following two conditions are satisfied:
    \begin{enumerate}[label=(\alph*)]
    \item $B_\eps \overset{\cE\cE}{\longrightarrow} B_0$ as $\eps \to 0$;
    \item for any family $u_\eps \in \cH_{\eps}$, $\eps>0$, such that $\norma{u_\eps}_{\cH_\eps}=1$ for all $\eps \in (0,1)$, there exists a subsequence $B_{\eps_k}u_{\eps_k}$ of $B_\eps u_\eps$ and $\bar{u} \in \cH_0$ such that $B_{\eps_k}u_{\eps_k} \overset{\cE}{\longrightarrow} \bar{u}$ as $k \to \infty$.
    \end{enumerate}
\end{enumerate}
\end{definition}

\begin{definition}
Let $T$, $T_n$ be closed operators in $\cH$, $\cH_n$ respectively, $n \in \N$.
\begin{enumerate}
\item The sequence $(T_n)_{n \in \N}$ is called spectrally inclusive if for every $\la \in \sigma(T)$, there exists a sequence $(\la_n)_{n \in \N}$, $\la_n \in \sigma(T_n)$, $n \in \N$ such that $\la_n \to \la$.
\item We say that spectral pollution occurs for $(T_n)_{n \in \N}$ if there exists $\la \in \rho(T)$ and $\la_n \in \sigma(T_n)$, $n \in \N$ such that $\la_n \to \la$.
\item The sequence $(T_n)_{n \in \N}$ is called spectrally exact if it is spectrally inclusive and no spectral pollution occurs.
\end{enumerate}
\end{definition}

Let $(A_\eps)_{\eps \in [0,1]}$ be a family of closed densely defined linear operators, $A_\eps \in C(\cH_\eps)$, $\eps \in [0,1]$. Assume that:
\begin{align*} &\textup{(A1)(a): $\exists \la_0 \in \bigcap_{\eps \in [0,1]} \rho(A_\eps)$, $(A_\eps - \la_0)^{-1}$  compact $\eps \in [0,1]$,} \\
&\textup{(A1)(b) $(A_\eps - \la_0)^{-1} \Ccon (A_0 - \la_0)^{-1}$ as $\eps \to 0$.}
\end{align*}
 Then \cite[Theorem 2.6]{Boegli2} implies that $(A_\eps)_{\eps \in (0,1]}$ is a spectrally exact approximation of $A_0$.
Consider now the following setting. Let $m \in \N$, and let $\Omega$ be an open set of $\R^N$. Let $M$ be the number of multi-indices $\alpha = (\alpha_1, \dots, \alpha_N) \in \N_0^N$ with length $|\alpha| = |\alpha_1| + \cdots + |\alpha_N| = m$. For all $\alpha, \beta \in \N_0^N$  with $|\alpha| = |\beta| = m$ let $c_{\alpha\beta}$ be bounded measureable real-valued functions defined on $\R^N$, $c_{\alpha \beta} = c_{\beta \alpha}$ such that
\[
\sum_{|\alpha| = |\beta| = m} c_{\alpha\beta}(x) \xi_\alpha \xi_\beta \geq 0,
\]
for all $x \in \R^N$, $(\xi_\alpha)_{|\alpha| = m} \in \R^m$. For all measurable open sets $\Omega \in \R^N$ we define
\begin{equation} \label{Q form}
Q_{\Omega}(u,v) = \int_{\Omega} \bigg( c_{\alpha\beta} D^\alpha u D^\beta v + uv \bigg) dx
\end{equation}
Let $V(\Omega)$ be a linear subspace of $H^m(\Omega)$ containing $H^m_0(\Omega)$. Assume that $V(\Omega)$ endowed with the norm $Q_{\Omega}(\cdot)^{1/2}$ is complete. Then there exists a unique self-adjoint operator $A_{V(\Omega)}$ such that
\begin{equation} \label{eq: def T}
Q_{\Omega}(u,v) = (A^{1/2}_{V(\Omega)} u, A^{1/2}_{V(\Omega)} v)
\end{equation}
for all $u,v \in V(\Omega)$.

For $\eps \geq 0$, let $\Omega_\eps$ be a bounded domain of $\R^N$. In this setting we can give the following
\begin{definition}\label{spectral stability}
Let $W(\Omega_0)$ be a linear subspace of $H^m(\Omega_0)$ containing $H^m_0(\Omega_0)$. Assume that $W(\Omega_0)$ endowed with the norm $Q_{\Omega_0}^{1/2}$ is complete. The sequence of operators $(A_{V(\Omega_\eps)})_{\eps > 0} \cup \{ A_{W(\Omega_0)}\}$, defined as in \eqref{eq: def T} with $\Omega$ replaced by $\Omega_\eps$, and $Q_{\Omega_\eps}$ as in \eqref{Q form} for all $\eps > 0$, is said to be spectrally stable if $(A_{V(\Omega_\eps)})_{\eps > 0}$ is a spectrally exact approximation of $A_{W(\Omega_0)}$ and $W(\Omega_0) = V(\Omega_0)$.
\end{definition}

With Definition \ref{spectral stability}, \cite[Theorem 3.5]{ArrLamb} can be rephrased as:

\begin{theorem} \label{thm: (C) implies Ccon}
Assume that Condition (C), see \cite[Definition 3.1]{ArrLamb}, is satisfied by the sequence of operators $A_{V(\Omega_\eps)}$, $A_{V(\Omega)}$ associated with the quadratic forms $Q_{\Omega_\eps}$, $Q_\Omega$. Then the sequence of operators $(A_{V(\Omega_\eps)})_{\eps > 0}$ is spectrally stable.
\end{theorem}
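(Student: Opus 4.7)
The plan is to recognize this statement as a direct reformulation, in the language of compact convergence of resolvents and spectral stability as given in Definitions \ref{def: E C conv} and \ref{spectral stability}, of the abstract convergence theorem \cite[Theorem 3.5]{ArrLamb}. Hence the proof reduces to matching definitions and chaining two abstract results together: the Arrieta--Lamberti criterion giving compact convergence of resolvents, and B\"ogli's abstract spectral convergence theorem \cite[Theorem 2.6]{Boegli2} giving spectral exactness from compact convergence.

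First, I would fix the connecting system. Since $\cH_\eps = L^2(\Omega_\eps)$ is the Hilbert space on which $A_{V(\Omega_\eps)}$ acts, the natural $\cE_{\!\!\eps} : L^2(\Omega) \to L^2(\Omega_\eps)$ in our setting is the one induced by the diffeomorphism $\Phi_\eps$ of Section \ref{sec: auxil}, namely $\cE_{\!\!\eps} = T_\eps$ suitably normalised to make $\norma{T_\eps u}_{L^2(\Omega_\eps)} \to \norma{u}_{L^2(\Omega)}$; this is standard and follows from Lemma \ref{lemma: h_eps} together with a change of variables. With this choice in hand, Condition (C) of \cite[Definition 3.1]{ArrLamb} is precisely the quadratic-form level hypothesis (a Mosco-type liminf/recovery condition on $Q_{\Omega_\eps}$ versus $Q_{\Omega}$ using $T_\eps$) designed to guarantee compact convergence of the form-induced resolvents; I would simply record this and invoke \cite[Theorem 3.5]{ArrLamb} to obtain
\begin{equation*}
(A_{V(\Omega_\eps)} - \la_0)^{-1} \stackrel{\cC}{\rightarrow} (A_{V(\Omega)} - \la_0)^{-1} \quad \text{as } \eps \to 0,
\end{equation*}
for some (and hence every) $\la_0 \in \bigcap_{\eps \in [0,1]} \rho(A_{V(\Omega_\eps)})$. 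A convenient choice is $\la_0 = 0$, which lies in the resolvent set of each $A_{V(\Omega_\eps)}$ since these operators are strictly positive (the quadratic form $Q$ in \eqref{Q form} contains the $L^2$-term $\int_{\Omega_\eps} u v$ making $A_{V(\Omega_\eps)} \geq I$).

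Second, I would verify the remaining abstract hypothesis (A1)(a): each resolvent $(A_{V(\Omega_\eps)} - \la_0)^{-1}$ is compact on $L^2(\Omega_\eps)$. This is immediate from the standing assumption that the embedding $V(\Omega_\eps) \hookrightarrow L^2(\Omega_\eps)$ is compact, combined with the second representation theorem \cite[Theorem 2.23, VI.2]{Kato} applied to $Q_{\Omega_\eps}$ on $V(\Omega_\eps)$. With (A1)(a)--(A1)(b) both in place, \cite[Theorem 2.6]{Boegli2} directly yields that $(A_{V(\Omega_\eps)})_{\eps > 0}$ is a spectrally exact approximation of $A_{V(\Omega)}$.

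Finally, to conclude spectral stability in the sense of Definition \ref{spectral stability}, it remains only to identify the limiting operator. The hypothesis in the theorem is that Condition (C) is verified \emph{for the pair} $A_{V(\Omega_\eps)}$, $A_{V(\Omega)}$; in particular the limit operator is generated by the form $Q_{\Omega}$ on $V(\Omega)$ itself, so in the notation of Definition \ref{spectral stability} we have $W(\Omega_0) = V(\Omega_0) = V(\Omega)$ automatically. Combined with spectral exactness, this is precisely the definition of spectral stability, and the proof is complete. The only potentially delicate point is the bookkeeping around the connecting system and making sure that Condition (C) of \cite{ArrLamb}, originally phrased in a slightly different abstract setup, fits verbatim with the Definition \ref{def: E C conv} of $\cE\cE$- and $\cC$-convergence used here; but no new analytic ingredient is required beyond a translation of language.
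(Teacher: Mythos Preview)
Your proposal is correct and matches the paper's approach: the paper presents this theorem simply as a rephrasing of \cite[Theorem~3.5]{ArrLamb} in the language of Definition~\ref{spectral stability}, relying on the preceding paragraph (conditions (A1)(a)--(b) and \cite[Theorem~2.6]{Boegli2}) for the passage from compact convergence to spectral exactness. You have unpacked precisely this chain of implications; the only minor remark is that the connecting system in \cite{ArrLamb} is not necessarily $T_\eps$ but rather the extension-by-zero operator, though this does not affect the argument.
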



\section{Proof of Theorem \ref{thm: spectral conv tri weak}(iii), (iv)}
\label{sec: degeneration tri}

To prove $(iii)$ and $(iv)$ in Theorem \ref{thm: spectral conv tri weak}, we will show that Condition $(C)$, see \cite[Definition 3.1]{ArrLamb}, holds for the operators $A_{\Omega_\eps}$ associated to \eqref{triharmonic weak first}. An application of Theorem \ref{thm: (C) implies Ccon} will then prove the claims.

Establishing Condition $(C)$ will require several lemmata. We first establish a general lemma concerning the limiting boundary behaviour of sequences $(u_\eps)_\eps$ such that $u_\eps \in H^3(\Omega_\eps) \cap H^1_0(\Omega_\eps)$ and $\norma{u_\eps}_{H^3(\Omega_\eps)} < \infty$, for all $\eps > 0$.

For $\eps > 0$, we define
\[
\Omega^\sharp_\eps = \{ (\bar{x}, x_N) \in \R^N : \bar{x} \in \overline{W}, -1 \leq x_N < g_\eps(\bar{x}) \}, \quad \Omega^\sharp = \ov{W} \times
[-1,0),
\]
and for any $l \in \N$, $\eps > 0$ we set
\[H^{l}_{0,*}(\Omega_\eps) = \ov{C^{\infty}_c(\Omega_\eps^\sharp)}^{H^l(\Omega_\eps)}, \quad H^{l}_{0,*}(\Omega) =
\ov{C^{\infty}_c(\Omega^\sharp)}^{H^l(\Omega)}.\]

In the case of sequence of functions in $(u_\eps)_{\eps > 0}$, $u_\eps \in H^3(\Omega_\eps) \cap H^1_0(\Omega_\eps)$, we have the following
result

\begin{lemma}
\label{lemma: degeneration H^3 H^1_0}
Let $Y =[-1/2, 1/2]^{N-1}$, $\alpha \in \R$, $\alpha > 0$. Let $\Omega = W \times (-1,0)$, where $W \subset \R^{N-1}$ is bounded domain of
class $C^3$. Let $\Omega_\eps$ be as in \eqref{eq: geometric setting}. Let $(u_\eps)_{\eps > 0}$ be such that $H^3(\Omega_\eps) \cap
H^1_{0,*}(\Omega_\eps)$ for all $\eps > 0$ and $u_\eps|_{\Omega} \to u$ weakly in $H^3(\Omega)$. Let also $\hat{u}\in L^2(W, H^3(Y \times
(-1,0)))$ be defined by \eqref{limit: U_eps convergence}. Then:
\begin{enumerate}[label=(\roman*)]
\item If $\alpha > 5/2$ then $u \in H^3(\Omega) \cap H^1_{0,*}(\Omega)$;
\item If $\alpha = 5/2$ then $u \in H^3(\Omega) \cap H^1_{0,*}(\Omega)$ and for $i,j \in \{1, \dotsc, N-1\}$,
\begin{equation}\label{eq: limit case hatu eq} \frac{\partial^2 \hat{u}}{\partial y_i \partial y_j}(\bar{x}, \bar{y},0) = -\frac{\partial u}{\partial x_N}(\bar{x}, \bar{y},0)
\frac{\partial^2 b(\bar{y})}{\partial y_i \partial y_j}.
\end{equation}
\item If $0< \alpha < 5/2$ then $u \in H^3(\Omega)\cap H^2_{0,*}(\Omega)$;
\item If $0 < \alpha \leq 1$ then $u \in H^3_{0,*}(\Omega)$
\end{enumerate}
\end{lemma}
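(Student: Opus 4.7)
The overall plan is to combine three ingredients: (a) the Dirichlet condition on the top of $\Omega_\eps$, which in anisotropic unfolded coordinates reads $\hat u_\eps(\bar x, \bar y, \eps^{\alpha-1} b(\bar y)) = 0$; (b) Taylor expansions of this identity in $y_N$ about $y_N = 0$ together with the rescaling $\partial_{y_N}^k \hat u_\eps = \eps^k \widehat{\partial_{x_N}^k u_\eps}$; and (c) the polynomial/remainder splitting $\hat u_\eps = \p(u_\eps) + V_\eps$ of Lemma \ref{lemma: unfolding convergence poly}, in which $\p(u_\eps)$ is a polynomial of degree $\leq 2$ in $y$ and $V_\eps$ is of size $O(\eps^{5/2})$ in the relevant $L^2$-norms. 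Part (i) follows immediately from the fundamental theorem of calculus between $y_N = 0$ and $y_N = \eps^{\alpha-1} b(\bar y)$ combined with Cauchy--Schwarz and Lemma \ref{lemma: exact int formula}, which yield $\|u_\eps(\cdot, 0)\|_{L^2(W)}^2 \leq C \eps^\alpha$; continuity of the $H^3(\Omega) \to L^2(W)$ trace together with the weak $H^3$-convergence of $u_\eps|_\Omega$ then forces $u(\cdot, 0) = 0$, i.e., $u \in H^1_{0,*}(\Omega)$.

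For (ii), at the critical scale $\alpha = 5/2$ I would differentiate the unfolded identity twice in $\bar y$ using the chain rule. After converting $\partial_{y_N}^k \hat u_\eps = \eps^k \widehat{\partial_{x_N}^k u_\eps}$ the two largest contributions are $\eps^2 \widehat{\partial_{x_i x_j}^2 u_\eps}$ and $\eps^{5/2} \widehat{\partial_{x_N} u_\eps} \cdot \partial_{y_i y_j}^2 b(\bar y)$. Subtracting the $\bar y$-constant part of the identity (which removes $\partial_{y_i y_j}^2 \p(u_\eps) = \eps^2 \int_Y \widehat{\partial_{x_i x_j}^2 u_\eps}\, d\bar\zeta$, while $\partial_{y_i y_j}^2 b$ has zero $\bar y$-average by $Y$-periodicity), dividing the remaining oscillating part by $\eps^{5/2}$, and passing to the weak limit using $V_\eps/\eps^{5/2} \rightharpoonup \hat u$ (Lemma \ref{lemma: unfolding convergence poly}(i)(a)) together with the strong trace convergence of $\widehat{\partial_{x_N} u_\eps}(\cdot, \cdot, 0)$ produces the identity \eqref{eq: limit case hatu eq}; that $u \in H^1_{0,*}(\Omega)$ follows again from the argument of (i).

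For (iii), with $0 < \alpha < 5/2$ the second-order Taylor expansion of the boundary identity gives
\[
\hat u_\eps(\bar x, \bar y, 0) + \eps^\alpha b(\bar y)\, \widehat{\partial_{x_N} u_\eps}(\bar x, \bar y, 0) = O(\eps^{3\alpha/2}) = o(\eps^\alpha) \quad \text{in } L^2(W \times Y).
\]
Testing this against $\phi(\bar x) \chi(\bar y)$, where $\chi := b - \pi b$ and $\pi$ is the $L^2(Y)$-orthogonal projection onto polynomials in $\bar y$ of degree $\leq 2$, annihilates the $\p(u_\eps)$-piece of $\hat u_\eps(\cdot, \cdot, 0)$ by orthogonality; the $V_\eps$-piece contributes $O(\eps^{5/2}) = o(\eps^\alpha)$, while the second term converges to $\eps^\alpha \|\chi\|_{L^2(Y)}^2 \int_W \phi \partial_{x_N} u(\cdot, 0)\, d\bar x$ by strong trace convergence (using $\int_Y \chi b\, d\bar y = \|\chi\|_{L^2(Y)}^2$). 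Since $\|\chi\|_{L^2(Y)}^2 > 0$ for any non-constant $Y$-periodic $b$, dividing by $\eps^\alpha$ and exploiting arbitrariness of $\phi$ forces $\partial_{x_N} u(\cdot, 0) = 0$, i.e., $u \in H^2_{0,*}(\Omega)$. Part (iv) refines this: given $\partial_{x_N} u(\cdot, 0) = 0$ from (iii), a third-order Taylor expansion combined with the standard (isotropic) unfolding operator / two-scale convergence (needed precisely when $\alpha \leq 1$, because then $\eps^{\alpha-1}$ is bounded below and the anisotropic Taylor argument alone no longer suffices) yields, by the same orthogonality mechanism applied to a test function $\chi'$ with $\langle \chi', b^2\rangle_{L^2(Y)} \neq 0$, the additional vanishing $\partial_{x_N}^2 u(\cdot, 0) = 0$, hence $u \in H^3_{0,*}(\Omega)$.

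The principal technical obstacle is the control of $V_\eps$ and its derivatives at the \emph{positive} height $y_N = \eps^{\alpha-1} b(\bar y)$, outside the domain where Lemma \ref{lemma: unfolding convergence poly} directly provides bounds; this is bridged by a Taylor expansion of $V_\eps$ from $y_N = 0$, exploiting the uniform $L^2$-control on $D^3 V_\eps$ to transport information across the thin oscillating slab $\{0 < x_N < g_\eps(\bar x)\}$. The same mechanism is responsible for the exclusion of the range $\alpha \in (1, 3/2]$ from the main theorem (cf.\ Remark \ref{rem: open}): in that range the $\eps^{-l}$ blow-up of the derivatives of the pullback $T_\eps \varphi$ is no longer compensated either by the shrinking of the slab or by the vanishing of boundary traces, and a genuinely more refined extension operator would be required.
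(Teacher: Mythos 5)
Your treatment of (i) and (ii) is essentially the paper's own: you unfold the Dirichlet condition on the oscillating boundary, transport it to $y_N=0$ across the thin slab using the $L^2$ control on third derivatives (your ``Taylor expansion of $V_\eps$ from $y_N=0$'' is exactly the paper's fundamental-theorem-of-calculus estimate on $\hat\Psi_\eps$), subtract the $\bar y$-average, divide by $\eps^{5/2}$ and use the $O(\eps^{5/2})$ bound on the polynomial remainder together with strong convergence of the unfolded traces. For (iii) you genuinely diverge, and in an interesting way: the paper differentiates the boundary identity twice in the tangential variables and runs the $T_1,\dots,T_5$ analysis, treating $3/2<\alpha<5/2$, $\alpha\le 1$ and (in Appendix (B)) $1<\alpha<2$ by separate arguments, whereas you Taylor-expand the \emph{undifferentiated} condition in $y_N$ and test against $\phi(\bar x)\chi(\bar y)$ with $\chi=b-\pi b$ orthogonal to degree-$2$ polynomials. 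That argument is sound and covers the whole range $0<\alpha<5/2$ in one pass: the remainder is $O(\eps^{3\alpha/2})=o(\eps^\alpha)$, the polynomial part of $\hat u_\eps(\cdot,\cdot,0)$ dies by orthogonality, the $V_\eps$-trace is $O(\eps^{5/2})=o(\eps^\alpha)$, and $\widehat{\partial_{x_N}u_\eps}(\cdot,\cdot,0)\to\partial_{x_N}u(\cdot,0)$ strongly. The one point you must add is that $\|\chi\|_{L^2(Y)}>0$: this is not automatic from ``$b$ non-constant'' alone; you need the observation that a $Y$-periodic $C^1$ function which coincides on $Y$ with a polynomial of degree $\le 2$ is constant (match values and normal derivatives across opposite faces of $Y$). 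This plays the role of the paper's ``$b$ is not affine''.

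Part (iv), however, has a genuine gap as written. After the third-order expansion the identity reads $\hat u_\eps(\cdot,\cdot,0)+\eps^\alpha b\,\widehat{\partial_{x_N}u_\eps}(\cdot,\cdot,0)+\tfrac12\eps^{2\alpha}b^2\,\widehat{\partial^2_{x_N}u_\eps}(\cdot,\cdot,0)=O(\eps^{5\alpha/2})$, and you want the $\eps^{2\alpha}$-order information; but the only consequence of (iii) is $\partial_{x_N}u(\cdot,0)=0$, i.e.\ the middle term tends to zero \emph{with no rate}, so with a test function satisfying only $\langle\chi',b^2\rangle_{L^2(Y)}\neq 0$ the quantity $\eps^{-\alpha}\int b\chi'\,\widehat{\partial_{x_N}u_\eps}$ is not controlled. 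The fix is to require in addition $\chi'\perp b$ and $\chi'\perp$ polynomials of degree $\le 2$ in $L^2(Y)$: then only the $\bar y$-oscillation of $\widehat{\partial_{x_N}u_\eps}(\cdot,\cdot,0)$ enters, which is $O(\eps^{3/2})$ by the $V_\eps$ bound, giving a contribution $O(\eps^{\alpha+3/2})=o(\eps^{2\alpha})$ precisely when $\alpha<3/2$, while the $V_\eps$-trace term is $o(\eps^{2\alpha})$ only for $\alpha<5/4$ --- adequate for $\alpha\le 1$, but this rate bookkeeping is exactly where the argument lives and it is absent from your sketch. You must also prove that such a $\chi'$ exists, i.e.\ $b^2\notin\mathrm{span}\{\text{polynomials of degree}\le 2,\ b\}$ in $L^2(Y)$; again this follows from periodicity plus non-constancy (if $b^2-cb$ agreed on $Y$ with a quadratic polynomial it would be constant, forcing $b$ to take at most two values). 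Finally, the appeal to isotropic unfolding/two-scale convergence for $\alpha\le 1$ is never actually used in your mechanism; the paper's route there is different: it unfolds with the vertical scaling $x_N=\eps^\alpha y_N$, extracts a weak boundary limit $\zeta$ of the rescaled normal derivative, and integrates the resulting pointwise identity over $Y$ against $|\nabla b|^2$ to conclude $\partial^2_{x_N}u(\cdot,0)=0$.
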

\begin{proof}
\noindent Fix $0< \eps < 1$. We find convenient to treat first the case $\alpha \geq 3/2$. Since $u_\eps \in H_0^1(\Omega_\eps)$
\begin{equation}
\label{eq: boundary equality}
u_\eps(\bar{x}, g_\eps(\bar{x})) = 0, \quad \textup{for a.e. $\bar{x} \in W$. }
\end{equation}
Note that the function $u_\eps(\cdot, g_\eps(\cdot)) \in H^{5/2}(W) \subset H^2(W)$. Differentiation \eqref{eq: boundary
equality} with respect to $x_i$ and then with respect to $x_j$, $ i, j \in \{1, \dots, N-1 \}$ gives
\begin{equation}
\label{eq: boundary equality 2}
\begin{split}
&\frac{\partial^2 u_\eps}{\partial x_i \partial x_j}(\bar{x}, g_\eps(\bar{x})) + \frac{\partial^2 u_\eps}{\partial x_i\partial x_N}(\bar{x},
g_\eps(\bar{x})) \frac{\partial g_\eps(\bar{x})}{\partial x_j} + \frac{\partial^2 u_\eps}{\partial x_j\partial x_N}(\bar{x}, g_\eps(\bar{x}))
\frac{\partial g_\eps(\bar{x})}{\partial x_i}\\
&+ \frac{\partial^2 u_\eps}{\partial x^2_N}(\bar{x}, g_\eps(\bar{x})) \frac{\partial g_\eps(\bar{x})}{\partial x_i}\frac{\partial
g_\eps(\bar{x})}{\partial x_j} + \frac{\partial u_\eps}{\partial x_N}(\bar{x}, g_\eps(\bar{x}))\frac{\partial^2 g_\eps(\bar{x})}{\partial x_i
\partial x_j} = 0,
\end{split}
\end{equation}
for a.e. $\bar{x} \in W$. For $v \in H^1(\Omega_\eps)$, let $\hat{v}(\bar{x},y)$ for all $\bar{x} \in \widehat{W}_\eps$, $\bar{y} \in Y$, $y_N \in (-1/\eps, \eps^{\alpha -1}b(\bar{y}))$ be as in Definition \ref{def: anisotropic unfolding}. It is understood that $\hat{v}$ is set to be zero for all $\bar{x} \in W \setminus \widehat{W}_\eps$.\\
To shorten the notation, define $y_\eps := \eps^{\alpha-1}b(\bar{y})$, $\eps > 0$, $\bar{y} \in W$, and note that by periodicity of $b$, $b(\bar{y}) = b([\bar{x}/\eps] + \bar{y}) = \eps^{-\alpha}\widehat{g_\eps}(\bar{x}, \bar{y})$ for all $(\bar{x}, \bar{y}) \in C^k_\eps \times Y$.

An application of the unfolding operator to equality \eqref{eq: boundary equality 2}, with the help of Lemma \ref{lemma: exact int formula} gives
\[
\begin{split}
&\frac{1}{\eps^2}\frac{\partial^2 \hat{u}_\eps}{\partial y_i \partial y_j}(\bar{x}, \bar{y},  y_\eps) +
\frac{\eps^{\alpha-1}}{\eps^2}\frac{\partial^2 \hat{u}_\eps}{\partial y_i\partial y_N}(\bar{x}, \bar{y}, y_\eps) \frac{\partial
b(\bar{y})}{\partial y_j} + \frac{\eps^{\alpha-1}}{\eps^2}\frac{\partial^2 \hat{u}_\eps}{\partial y_j\partial y_N}(\bar{x}, \bar{y}, y_\eps)
\frac{\partial b(\bar{y})}{\partial y_i}\\
&+ \frac{\eps^{2\alpha-2}}{\eps^2}\frac{\partial^2 \hat{u}_\eps}{\partial y^2_N}(\bar{x}, \bar{y}, y_\eps) \frac{\partial b(\bar{y})}{\partial
y_i}\frac{\partial b(\bar{y})}{\partial y_j} + \frac{\eps^{\alpha-2}}{\eps} \frac{\partial \hat{u}_\eps}{\partial y_N}(\bar{x}, \bar{y},
y_\eps)\frac{\partial^2 b(\bar{y})}{\partial y_i \partial y_j} = 0,
\end{split}
\]
for a.e. $\bar{x} \in W$, for a.e. $\bar{y} \in Y$. Define
\[
\begin{split}
\hat{\Psi}_\eps(\bar{x},y) = &\frac{1}{\eps^2}\frac{\partial^2 \hat{u}_\eps}{\partial y_i \partial y_j}(\bar{x}, y) +
\frac{\eps^{\alpha-1}}{\eps^2}\frac{\partial^2 \hat{u}_\eps}{\partial y_i\partial y_N}(\bar{x}, y) \frac{\partial b(\bar{y})}{\partial y_j} +
\frac{\eps^{\alpha-1}}{\eps^2}\frac{\partial^2 \hat{u}_\eps}{\partial y_j\partial y_N}(\bar{x}, y) \frac{\partial b(\bar{y})}{\partial y_i}\\
&+ \frac{\eps^{2\alpha-2}}{\eps^2}\frac{\partial^2 \hat{u}_\eps}{\partial y^2_N}(\bar{x}, y) \frac{\partial b(\bar{y})}{\partial
y_i}\frac{\partial b(\bar{y})}{\partial y_j} + \frac{\eps^{\alpha-2}}{\eps} \frac{\partial \hat{u}_\eps}{\partial y_N}(\bar{x},
y)\frac{\partial^2 b(\bar{y})}{\partial y_i \partial y_j},
\end{split}
\]
for a.e. $\bar{x} \in W$, for a.e. $\bar{y} \in Y$. Let also $\hat{Y} := \{ y\in \R^N : \bar{y} \in Y, -1 < y_N < \eps^{\alpha - 1} b(\bar{y}) \}$. Then $\hat{\Psi}_\eps \in L^2(W, H^1(\hat{Y}))$.

Since $\hat{\Psi}_\eps(\bar{x},y, y_\eps) = 0$ we have that $ |\hat{\Psi}_\eps(\bar{x},\bar{y}, 0)| \leq \int_0^{y_\eps} |\partial_{y_N}
\hat{\Psi}_\eps (\bar{x}, \bar{y}, t) |\, dt$ for a.e. $\bar{x}\in W$, $\bar{y} \in Y$, from which we deduce
\begin{multline}
\label{ineq: conti psi_eps}
|\hat{\Psi}_\eps(\bar{x},\bar{y}, 0)|\leq \big(\eps^{\alpha-1} \norma{b}_{\infty}\big)^{1/2} \Bigg[ \frac{1}{\eps^2} \bigg\lVert
\frac{\partial^3 \hat{u}_\eps}{\partial y_i \partial y_j \partial y_N}(\bar{x}, \bar{y}, \cdot) \bigg\rVert_{L^2(0, y_\eps)}\\
+ \frac{\eps^{\alpha-1}}{\eps^2} \norma{\nabla b}_{\infty} \bigg\lVert \frac{\partial^3 \hat{u}_\eps}{\partial y_i \partial y^2_N}(\bar{x},
\bar{y}, \cdot) \bigg\rVert_{L^2(0, y_\eps)} + \frac{\eps^{\alpha-1}}{\eps^2} \norma{\nabla b}_{\infty} \bigg\lVert \frac{\partial^3
\hat{u}_\eps}{\partial y_j \partial y^2_N}(\bar{x}, \bar{y}, \cdot) \bigg\rVert_{L^2(0, y_\eps)}\\
+ \frac{\eps^{2\alpha-2}}{\eps^2} \norma{\nabla b}^2_{\infty} \bigg\lVert \frac{\partial^3 \hat{u}_\eps}{\partial y^3_N}(\bar{x}, \bar{y},
\cdot) \bigg\rVert_{L^2(0, y_\eps)}+ \frac{\eps^{\alpha-2}}{\eps} \norma{D^2 b}_{\infty} \bigg\lVert \frac{\partial^2 \hat{u}_\eps}{\partial
y^2_N}(\bar{x}, \bar{y}, \cdot) \bigg\rVert_{L^2(0, y_\eps)} \Bigg],
\end{multline}
Let us define $\hat{Y}_{>0}:= \hat{Y} \cap \{y_N \in \R : y_N > 0 \}$. We square both hand sides of \eqref{ineq: conti psi_eps} and integrate
over $W \times Y$ to get
\begin{multline} \label{proof: ineq Psi 1}
\int_W \int_Y |\hat{\Psi}_\eps(\bar{x},\bar{y}, 0)|^2 d\bar{y} d\bar{x} \leq C (\norma{b}^2_{C^2(Y)} + \norma{\nabla b}^4_{\infty})
\eps^{\alpha-1} \Bigg[ \frac{1}{\eps^4} \norma{D^3_y\hat{u}_\eps}^2_{L^2(W \times \hat{Y}_{>0})} \\
+ \frac{\eps^{2\alpha-2}}{\eps^4} \norma{D^3_y\hat{u}_\eps}^2_{L^2(W \times \hat{Y}_{>0})} + \frac{\eps^{4\alpha - 4}}{\eps^{4}} \norma{D_y^3
\hat{u}_\eps}^2_{L^2(W \times \hat{Y}_{>0})} + \frac{\eps^{2\alpha-4}}{\eps^2} \Big\lVert\frac{\partial^2 \hat{u}_\eps}{\partial y_N^2}
\Big\rVert^2_{L^2(W \times \hat{Y}_{>0})}   \Bigg],
\end{multline}
Due to \eqref{eq: exact int formula} and some basic estimates, \eqref{proof: ineq Psi 1} implies that
\begin{equation}
\label{ineq: estimate psi eps}
\begin{split}
\norma{\hat{\Psi}_\eps(\bar{x},\bar{y}, 0)}^2_{L^2(W \times Y)} &\leq C \norma{D^3 u_\eps}^2_{L^2(\Omega_\eps)} (\eps^\alpha +
\eps^{3\alpha-2} + \eps^{5\alpha - 4}) + C \eps^{3\alpha-4} \Bigg\lVert \frac{\partial^2 u_\eps}{\partial x_N^2}
\Bigg\rVert^2_{L^2(\Omega_\eps \setminus \Omega)} \\
&\leq C (\eps^{\alpha} + \eps^{4\alpha - 4}) + o(\eps^{\alpha})
\end{split}
\end{equation}
where in the last inequality we used that since $\partial^2_{x^2_N} u_\eps$ is in $H^1(\Omega_\eps)$, $\eps > 0$, with uniformly bounded norm, there exists $C > 0$ such that $ \norma{\partial^2_{x^2_N} u_\eps}^2_{L^2(\Omega_\eps \setminus \Omega)} \leq C |\Omega_\eps
\setminus \Omega| \norma{\partial^2_{x_N^2} u_\eps}^2_{W^{1,2}(\Omega_\eps)}$, for all $\eps > 0$.  Note that since $\alpha \geq 3/2 > 1$, \eqref{ineq: estimate psi eps} implies
\begin{equation}
\label{limit: vanishing Psi eps}
\int_W \int_Y \eps^{-1}\Bigg|\hat{\Psi}_\eps(\bar{x},\bar{y}, 0) - \int_Y \hat{\Psi}_\eps(\bar{x},\bar{z}, 0) d\bar{z} \Bigg|^2 d\bar{y}
d\bar{x} = O(\eps^{\alpha-1}) \to 0,
\end{equation}
as $\eps \to 0$. We can rewrite \eqref{limit: vanishing Psi eps} as
\begin{equation}
\label{limit: eps^-1 * sum T_i vanishing}
\int_W \int_Y \big| T_1 + \dots + T_5 \big|^2 d\bar{y} d\bar{x} \to 0, \quad \textup{as $\eps \to 0$,}
\end{equation}
where
\begin{align*}
T_1 &= \frac{1}{\eps^{5/2}} \Bigg( \frac{\partial^2\hat{u}_\eps}{\partial y_i \partial y_j}(\bar{x}, \bar{y},0) - \int_Y
\frac{\partial^2\hat{u}_\eps}{\partial y_i \partial y_j}(\bar{x}, \bar{z},0) d\bar{z}\Bigg);\\
T_2 &= \frac{\eps^{\alpha-1}}{\eps^{5/2}} \Bigg( \frac{\partial^2\hat{u}_\eps}{\partial y_i \partial y_N}(\bar{x}, \bar{y},0) \frac{\partial
b(\bar{y})}{\partial y_j} - \int_Y \frac{\partial^2\hat{u}_\eps}{\partial y_i \partial y_N}(\bar{x}, \bar{z},0)\frac{\partial
b(\bar{z})}{\partial y_j} d\bar{z}\Bigg);\\
T_3 &= \frac{\eps^{\alpha-1}}{\eps^{5/2}} \Bigg( \frac{\partial^2\hat{u}_\eps}{\partial y_j \partial y_N}(\bar{x}, \bar{y},0) \frac{\partial
b(\bar{y})}{\partial y_i} - \int_Y \frac{\partial^2\hat{u}_\eps}{\partial y_j \partial y_N}(\bar{x}, \bar{z},0)\frac{\partial
b(\bar{z})}{\partial y_i} d\bar{z}\Bigg);\\
T_4 &= \frac{\eps^{2\alpha-2}}{\eps^{5/2}} \Bigg( \frac{\partial^2\hat{u}_\eps}{\partial y^2_N}(\bar{x}, \bar{y},0) \frac{\partial
b(\bar{y})}{\partial y_i} \frac{\partial b(\bar{y})}{\partial y_j} - \int_Y \frac{\partial^2\hat{u}_\eps}{\partial y^2_N}(\bar{x},
\bar{z},0)\frac{\partial b(\bar{z})}{\partial y_i} \frac{\partial b(\bar{z})}{\partial y_j} d\bar{z}\Bigg);\\
T_5 &= \frac{\eps^{\alpha -2}}{\eps^{3/2}}  \Bigg( \frac{\partial\hat{u}_\eps}{\partial y_N}(\bar{x}, \bar{y},0) \frac{\partial^2
b(\bar{y})}{\partial y_i \partial y_j} - \int_Y \frac{\partial\hat{u}_\eps}{\partial y_N}(\bar{x}, \bar{z},0)\frac{\partial^2
b(\bar{z})}{\partial y_i \partial y_j} d\bar{z}\Bigg).
\end{align*}
Recall that the function $U_\eps$ defined by
\begin{multline*}
U_\epsilon(\bar{x}, y) = \hat{u}_\epsilon(\bar{x}, y) - \int_Y \Bigg(\hat{u}_\epsilon(\bar{x}, \bar{\zeta}, 0) - \sum_{\abs{\eta}=2} \int_Y
D^\eta_y \hat{u}_\epsilon(\bar{x}, \bar{\zeta}, 0)\, \diff{\bar{\zeta}}\Bigg) \, \frac{\bar{\zeta}^\eta}{\eta!} \, \diff{\bar{\zeta}}\\
- \int_Y \nabla_y \hat{u}_\epsilon(\bar{x}, \bar{\zeta}, 0)\, \diff{\bar{\zeta}} \cdot y - \sum_{\abs{\eta}=2}\int_Y D^\eta_y
\hat{u}_\epsilon(\bar{x}, \bar{\zeta}, 0)\, \diff{\bar{\zeta}}\, \frac{y^\eta}{\eta!},
\end{multline*}
is such that the sequence $(\eps^{-5/2} U_\eps)$ is uniformly bounded in $L^2(W, H^3(Y \times (d,0))$, for any $d< 0$, see Lemma
\ref{lemma: unfolding convergence poly}. Note also that $D^\eta_y U_\eps = D^\eta_y \hat{u_\epsilon} - \int_Y D^\eta_y \hat{u_\epsilon}(\cdot,
\bar{z}, \cdot) d\bar{z}$ for any $|\eta| = 2$. Using these facts we deduce that
\[
\begin{split}
\int_W \int_Y |T_1|^2 d\bar{y} d\bar{x} &= \int_W \int_Y \Bigg|\eps^{-5/2}\frac{ \partial^2 U_\eps}{\partial y_i \partial y_j}(\bar{x},
\bar{y}, 0)\Bigg|^2 d\bar{y} d\bar{x}\\
&\leq C \Big\lVert\eps^{-5/2}\frac{\partial^2 U_\eps}{\partial y_i \partial y_j}\Big\rVert^2_{L^2(W, H^1(Y \times (-1,0))}\\
&\leq C \lVert\eps^{-5/2} D_y^3 \hat{u}_\eps \rVert^2_{L^2(W \times Y \times (-1,0))} \leq C \norma{D^3 u_\eps}^2_{L^2(\Omega)},
\end{split}
\]
where we have used a trace inequality, the Poincaré-Wirtinger inequality, and the exact integration formula \eqref{eq: exact int formula}.
Hence $T_1$ is bounded in $L^2(W \times Y)$, uniformly in $\eps > 0$.

Consider now $T_2$. Note that the function $\frac{\partial b}{\partial y_j}$ has null average over $Y$ because of
periodicity. Hence,
\[
\int_Y \frac{\partial^2\hat{u}_\eps}{\partial y_i \partial y_N}(\bar{x}, \bar{z},0)\frac{\partial b(\bar{z})}{\partial y_j} d\bar{z} =  \int_Y
\frac{\partial b(\bar{z})}{\partial y_j} \Bigg( \frac{\partial^2\hat{u}_\eps}{\partial y_i \partial y_N}(\bar{x}, \bar{z},0) - \int_Y
\frac{\partial^2\hat{u}_\eps}{\partial y_i \partial y_N}(\bar{x}, \bar{t},0) d\bar{t} \Bigg) d\bar{z}
\]
and
{\small
\begin{equation}
\label{ineq: average decay}
\begin{split}
&\int_W \int_Y \eps^{2\alpha-2-5} \Bigg|\int_Y \frac{\partial b(\bar{z})}{\partial y_j} \Bigg( \frac{\partial^2\hat{u}_\eps}{\partial y_i
\partial y_N}(\bar{x}, \bar{z},0) - \int_Y \frac{\partial^2\hat{u}_\eps}{\partial y_i \partial y_N}(\bar{x}, \bar{t},0) d\bar{t} \Bigg)
d\bar{z}\Bigg|^2 d\bar{y} d\bar{x}\\
&\leq C\frac{\eps^{2\alpha}}{\eps^2} \int_W \int_Y\int_Y \Bigg|\eps^{-5/2}\Bigg( \frac{\partial^2\hat{u}_\eps}{\partial y_i \partial y_N}(\bar{x},
\bar{z},0) - \int_Y \frac{\partial^2\hat{u}_\eps}{\partial y_i \partial y_N}(\bar{x}, \bar{t},0) d\bar{t} \Bigg)\Bigg|^2 d\bar{z} d\bar{y}
d\bar{x}\\
&\leq C \eps^{2\alpha-2} \norma{\eps^{-5/2} \partial^2_{y_i y_N} U_\eps(\cdot,\cdot, 0)}^2_{L^2(W \times Y)} \\
&\leq C \eps^{2\alpha-2} \norma{\eps^{-5/2}D_y^3\hat{u}_\eps}^2_{L^2(W \times Y \times(-1,0))} \to 0,
\end{split}
\end{equation}
}

as $\eps \to 0$, for all $\alpha>1$. We deduce that
\begin{equation}
\label{ineq: estimate T2}
\begin{split}
\int_W\int_Y |T_2|^2 d\bar{y} d\bar{x} &\leq C \int_W \int_Y \Bigg|\frac{\eps^{\alpha-1}}{\eps^{5/2}} \Bigg(
\frac{\partial^2\hat{u}_\eps}{\partial y_i \partial y_N}(\bar{x}, \bar{y},0) \frac{\partial b(\bar{y})}{\partial y_j} \Bigg) \Bigg|^2 d\bar{y}
d\bar{x}\\
&+C \int_W \int_Y \Bigg|\int_Y \eps^{\alpha-1-5/2} \frac{\partial^2\hat{u}_\eps}{\partial y_i \partial y_N}(\bar{x}, \bar{z},0) \frac{\partial
b(\bar{z})}{\partial y_j} d\bar{z}\Bigg|^2 d\bar{y} d\bar{x}\\
&\leq C \int_W \int_Y \Bigg|\frac{\eps^{\alpha}}{\eps^{3/2}} \Bigg( \frac{1}{\eps^2}\frac{\partial^2\hat{u}_\eps}{\partial y_i \partial
y_N}(\bar{x}, \bar{y},0) \frac{\partial b(\bar{y})}{\partial y_j} \Bigg) \Bigg|^2 d\bar{y} d\bar{x} + o(1),
\end{split}
\end{equation}
as $\eps \to 0$. We claim that
\begin{equation}
\label{claim: convergence unfolding}
\frac{1}{\eps^2}\frac{\partial^2\hat{u}_\eps}{\partial y_i \partial y_N}(\bar{x}, \bar{y},0) \frac{\partial b(\bar{y})}{\partial y_j} \to
\frac{\partial^2 u}{\partial x_i \partial x_N}(\bar{x}, 0) \frac{\partial b(\bar{y})}{\partial y_j},
\end{equation}
in $L^2(W \times Y)$ as $\eps \to 0$. Since $u_\eps|_{\Omega} \to u$ weakly in $H^3(\Omega)$, by the compactness of the trace operator we have that
\begin{equation}
\label{limit: trace convergence}
\frac{\partial^2 u_\eps}{\partial x_i \partial x_N}(\bar{x}, 0) \to \frac{\partial^2 u}{\partial x_i \partial x_N}(\bar{x}, 0),
\end{equation}
in $L^2(W)$, as $\eps \to 0$. Now define
\[
\overline{\frac{\partial^2 u_\eps}{\partial x_i \partial x_N}}(\bar{x}) := \frac{1}{\eps^{N-1}} \int_{C_\eps(\bar{x})} \frac{\partial^2
u_\eps}{\partial x_i \partial x_N}(\bar{t},0)\, d\bar{t},
\]
where $C_\eps(\bar{x})$ is as in \eqref{def: anistropic unfolding cell}. Note that, by a change of variable,
\[
\overline{\frac{\partial^2 u_\eps}{\partial x_i \partial x_N}}(\bar{x})  = \int_Y \widehat{\frac{\partial^2 u_\eps}{\partial x_i \partial
x_N}}(\bar{x},\bar{z},0) d\bar{z} = \frac{1}{\eps^2} \int_Y \frac{\partial^2 \hat{u}_\eps}{\partial y_i \partial y_N} (\bar{x},\bar{z},0)\,
d\bar{z}.
\]
By \eqref{limit: trace convergence} we deduce that
\[
\overline{\frac{\partial^2 u_\eps}{\partial x_i \partial x_N}} \to \frac{\partial^2 u}{\partial x_i \partial x_N}(\cdot,0),
\]
strongly in $L^2(W)$ as $\eps \to 0$. Here, we have used the fact that if a sequence of functions $v_\eps$ converges strongly in $L^2$ to $v$ then $\overline{v_\eps}$ converges strongly in $L^2$ to $v$. We give a proof of this in Lemma \ref{lemma: easy averages} in Appendix (B). Since $\eps^{-5/2}\partial_{y_i y_N}U_\eps$ is uniformly bounded in $L^2(W \times Y)$, for all $\eps
>0$ due to Lemma \ref{lemma: exact int formula}, it follows that
\[
\frac{1}{\eps^2} \bigg(\frac{\partial^2 \hat{u}_\eps}{\partial y_i \partial y_N}(\cdot,\cdot,0) - \int_Y \frac{\partial^2
\hat{u}_\eps}{\partial y_i \partial y_N}(\cdot, \bar{z},0) d\bar{z} \bigg) \to 0,
\]
in $L^2(W \times Y)$ as $\eps \to 0$. Hence, $\frac{1}{\eps^2}\frac{\partial^2\hat{u}_\eps}{\partial y_i \partial y_N}(\bar{x}, \bar{y},0)  \to \frac{\partial^2 u}{\partial x_i \partial
x_N}(\bar{x}, 0)$ in $L^2(W \times Y)$ as $\eps \to 0$, which proves the claim. Since $\alpha > 3/2$, by recalling \eqref{ineq: estimate T2} we then deduce that $T_2$ vanishes in $L^2(W \times Y)$ as $\eps \to 0$.

$T_3$ is exactly $T_2$ with swapped indexes $i$ and $j$, hence also $T_3$ vanishes in $L^2(W \times Y)$ as $\eps
\to 0$.
%

We then consider $T_4$. By arguing as in \eqref{claim: convergence unfolding} we deduce that
\begin{equation}
\label{proof: limit 1}
\frac{1}{\eps^{2}} \frac{\partial^2\hat{u}_\eps}{\partial y^2_N}(\bar{x}, \bar{y},0) \frac{\partial b(\bar{y})}{\partial y_i} \frac{\partial
b(\bar{y})}{\partial y_j} \to \frac{\partial^2 u}{\partial y^2_N}(\bar{x}, 0) \frac{\partial b(\bar{y})}{\partial y_i} \frac{\partial
b(\bar{y})}{\partial y_j},
\end{equation}
in $L^2(W \times Y)$ as $\eps \to 0$, so the integral in $Y$ of the left-hand side of \eqref{proof: limit 1} is convergent.
Thus,
\begin{equation}
\label{limit: vanishing T4}
T_4 = \frac{\eps^{2\alpha}}{\eps^{5/2}} \Bigg( \frac{1}{\eps^2}\frac{\partial^2\hat{u}_\eps}{\partial y^2_N}(\bar{x}, \bar{y},0)
\frac{\partial b(\bar{y})}{\partial y_i} \frac{\partial b(\bar{y})}{\partial y_j} - \int_Y
\frac{1}{\eps^2}\frac{\partial^2\hat{u}_\eps}{\partial y^2_N}(\bar{x}, \bar{z},0)\frac{\partial b(\bar{z})}{\partial y_i} \frac{\partial
b(\bar{z})}{\partial y_j} d\bar{z}\Bigg) \to 0,
\end{equation}
in $L^2(W \times Y)$ as $\eps \to 0$ for all $\alpha > 5/4$, hence in particular for any $\alpha \geq 3/2$.\\
Finally, we consider $T_5$. Arguing as in the proof of Claim \eqref{claim: convergence unfolding} we can prove that
\begin{equation}
\label{claim: convergence T5}
\frac{1}{\eps} \frac{\partial \hat{u}_\eps}{\partial y_N}(\bar{x}, \bar{y}, 0) \frac{\partial^2 b(\bar{y})}{\partial y_i \partial y_j} \to
\frac{\partial u}{\partial y_N}(\bar{x}, 0) \frac{\partial^2 b(\bar{y})}{\partial y_i \partial y_j},
\end{equation}
in $L^2(W \times Y)$ as $\eps \to 0$ and
\begin{equation}
\label{claim: convergence T5 2}
\int_Y \frac{1}{\eps} \frac{\partial \hat{u}_\eps}{\partial y_N}(\bar{x}, \bar{z}, 0) \frac{\partial^2 b(\bar{z})}{\partial y_i \partial
y_j}d\bar{z} \to \frac{\partial u}{\partial y_N}(\bar{x}, 0) \int_Y \frac{\partial^2 b(\bar{z})}{\partial y_i \partial y_j} d\bar{z} = 0,
\end{equation}
in $L^2(W \times Y)$ as $\eps \to 0$, where the right-hand side of \eqref{claim: convergence T5 2} is zero due to periodicity of $b$.
We now consider different cases according to the value of $\alpha$. \\

\vspace{0.1cm}

\noindent\textbf{Case $3/2 < \alpha < 5/2$.} In this case, by summarising the previous results we have that $T_1$ is uniformly bounded in $L^2(W \times
Y)$ as $\eps \to 0$, whereas $T_2, T_3, T_4$ tend to zero in $L^2(W \times Y)$ as $\eps \to 0$. Then \eqref{limit: eps^-1 * sum T_i vanishing} implies that there exists a constant $M > 0$ such that
\[
\Bigg(\int_W\int_Y |T_5|^2 d\bar{y} d\bar{x}\Bigg)^{1/2} \leq \Bigg(\int_W\int_Y |T_1 + T_2 + T_3 + T_4|^2 d\bar{y} d\bar{x}\Bigg)^{1/2} +
o(1) \leq M,
\]
as $\eps \to 0$. Thus,
\[
\Bigg\lVert \frac{1}{\eps}\frac{\partial \hat{u}_\eps}{\partial y_N}(\bar{x}, \bar{y},0) \frac{\partial^2 b(\bar{y})}{\partial y_i \partial
y_j} - \int_Y \frac{1}{\eps}\frac{\partial \hat{u}_\eps}{\partial y_N}(\bar{x}, \bar{z},0) \frac{\partial^2 b(\bar{z})}{\partial y_i \partial
y_j} d\bar{z} \Bigg\rVert_{L^2(W \times Y)} = O(\eps^{5/2-\alpha}),
\]
as $\eps \to 0$. By letting $\eps \to 0$ and recalling \eqref{claim: convergence T5} and \eqref{claim: convergence T5 2} we deduce that
$\frac{\partial u}{\partial y_N}(\bar{x}, 0) \frac{\partial^2 b(\bar{y})}{\partial y_i \partial y_j} = 0$, for a.e. $\bar{x} \in W$, for a.e. $\bar{y} \in Y$, and since $b$ is not affine we deduce that
\begin{equation}
\label{case 3/2<alpha<5/2}
\frac{\partial u}{\partial x_N}(\bar{x}, 0) = 0,
\end{equation}
for a.e. $\bar{x} \in W$. We conclude that $u \in H^3(\Omega) \cap H^2_{0,*}$.

\vspace{0.3cm}

\noindent\textbf{Case $\alpha = 5/2$.} In this case, we have the estimate
\[
\Bigg(\int_W\int_Y |T_1 + T_5|^2 d\bar{y} d\bar{x}\Bigg)^{1/2} \leq \Bigg(\int_W\int_Y |T_2 + T_3 + T_4|^2 d\bar{y} d\bar{x}\Bigg)^{1/2} +
o(1) = o(1),
\]
as $\eps \to 0$. Thus,
\[
\frac{1}{\eps^{5/2}} \Bigg( \frac{\partial^2\hat{u}_\eps}{\partial y_i \partial y_j}(\bar{x}, \bar{y},0) - \int_Y
\frac{\partial^2\hat{u}_\eps}{\partial y_i \partial y_j}(\bar{x}, \bar{z},0) d\bar{z}\Bigg) +
\frac{1}{\eps}\frac{\partial\hat{u}_\eps}{\partial y_N}(\bar{x}, \bar{y},0) \frac{\partial^2 b(\bar{y})}{\partial y_i \partial y_j} \to 0,
\]
as $\eps \to 0$. Now since $(\eps^{-5/2} U_\eps)$ is uniformly bounded in $L^2(W \times Y \times (d,0))$, there exists a subsequence of
$(\eps^{-5/2} U_\eps)$ and a function $\hat{u} \in L^2(W, H^3(Y \times (d,0)))$ such that
\begin{equation}
\label{limit: U_eps convergence}
\eps^{-5/2} U_\eps \rightharpoonup \hat{u},
\end{equation}
in $L^2(W, H^3(Y \times (d,0)))$. \eqref{limit: U_eps convergence} implies that
\[
\frac{1}{\eps^{5/2}} \Bigg( \frac{\partial^2\hat{u}_\eps}{\partial y_i \partial y_j}(\bar{x}, \bar{y},0) - \int_Y
\frac{\partial^2\hat{u}_\eps}{\partial y_i \partial y_j}(\bar{x}, \bar{z},0) d\bar{z}\Bigg) \to \frac{\partial^2\hat{u}}{\partial y_i \partial
y_j}(\bar{x}, \bar{y},0),
\]
strongly in $L^2(W \times Y)$ as $\eps \to 0$. Moreover, according to \eqref{claim: convergence T5} we deduce that
\begin{equation}
\label{case alpha=5/2}
\frac{\partial^2\hat{u}}{\partial y_i \partial y_j}(\bar{x}, \bar{y},0) = -\frac{\partial u}{\partial y_N}(\bar{x}, 0) \frac{\partial^2
b(\bar{y})}{\partial y_i \partial y_j},
\end{equation}
for a.e. $\bar{x} \in W$, a.e. $\bar{y} \in Y$, which is \eqref{eq: limit case hatu eq}. \\

\vspace{0.3cm}

\vspace{0.3cm}

\noindent\textbf{Case $\alpha \leq 1$.} In this case we give a more direct proof based on a different definition of the unfolding operator. We define
\begin{equation}
\label{proof: def hat Y tri}
\hat{Y}= \{ (\bar{y}, y_N) : \bar{y} \in Y, -1 < y_N < b(\bar{y})\},
\end{equation}
and
\begin{equation}
\label{proof: def hat u eps tri}
\hat{u}_\eps(\bar{x}, \bar{y}, y_N) := u_\eps\bigg(\eps \Big[\frac{\bar{x}}{\eps}\Big] + \eps \bar{y}, \eps^\alpha y_N\bigg),
\end{equation}
for all $(\bar{x}, y) \in W \times \hat{Y}$, for all $u_\eps \in H^3(\Omega_\eps)$. Note that $\hat{u}_\eps$, $\eps \in (0,1]$, are defined on a fixed domain of $\R^N$. Then, starting from the identity \eqref{eq: boundary equality} we deduce the analogous of \eqref{eq: boundary equality 2}, which namely reads
\begin{equation}
\label{eq: unfolding equality alfa < 1}
\begin{split}
&\frac{1}{\eps^2}\frac{\partial^2 \hat{u}_\eps}{\partial y_i \partial y_j}(\bar{x}, \bar{y},  b(\bar{y})) +
\frac{\eps^{\alpha-1}}{\eps^{\alpha+1}}\frac{\partial^2 \hat{u}_\eps}{\partial y_i\partial y_N}(\bar{x}, \bar{y}, b(\bar{y})) \frac{\partial
b(\bar{y})}{\partial y_j}\\
&+ \frac{\eps^{\alpha-1}}{\eps^{\alpha + 1}}\frac{\partial^2 \hat{u}_\eps}{\partial y_j\partial y_N}(\bar{x}, \bar{y}, b(\bar{y}))
\frac{\partial b(\bar{y})}{\partial y_i} + \frac{\eps^{2\alpha-2}}{\eps^{2\alpha}}\frac{\partial^2 \hat{u}_\eps}{\partial y^2_N}(\bar{x},
\bar{y}, b(\bar{y})) \frac{\partial b(\bar{y})}{\partial y_i}\frac{\partial b(\bar{y})}{\partial y_j}\\
&+ \frac{\eps^{\alpha-2}}{\eps^{\alpha}} \frac{\partial \hat{u}_\eps}{\partial y_N}(\bar{x}, \bar{y}, b(\bar{y}))\frac{\partial^2
b(\bar{y})}{\partial y_i \partial y_j} = 0.
\end{split}
\end{equation}
If $\alpha = 1$, by arguing as in \eqref{proof: conv hat u eps to u} below, we have
\[
\frac{1}{\eps^2}\frac{\partial^2 \hat{u}_\eps}{\partial y_i \partial y_j}(\bar{x}, \bar{y},  b(\bar{y})) \to \frac{\partial^2 u}{\partial x_i
\partial x_j}(\bar{x},0),
\]
\[
\frac{1}{\eps^{2}}\frac{\partial^2 \hat{u}_\eps}{\partial y_i\partial y_N}(\bar{x}, \bar{y}, b(\bar{y})) \frac{\partial b(\bar{y})}{\partial
y_j} \to \frac{\partial^2 u}{\partial x_i \partial x_N}(\bar{x},0) \frac{\partial b(\bar{y})}{\partial y_j},
\]
\[
\frac{1}{\eps^{2}}\frac{\partial^2 \hat{u}_\eps}{\partial y^2_N}(\bar{x}, \bar{y}, b(\bar{y})) \frac{\partial b(\bar{y})}{\partial
y_i}\frac{\partial b(\bar{y})}{\partial y_j} \to \frac{\partial^2 u}{\partial x_N^2}(\bar{x},0) \frac{\partial b(\bar{y})}{\partial
y_i}\frac{\partial b(\bar{y})}{\partial y_j},
\]
as $\eps \to 0$, where the limits are taken in $L^2(W \times Y)$. According to \eqref{eq: unfolding equality alfa < 1}, we immediately
discover that
\begin{equation}
\label{ineq: decay normal derivative}
\Bigg \lVert \frac{1}{\eps} \frac{\partial \hat{u}_\eps}{\partial y_N}(\bar{x}, \bar{y}, b(\bar{y})) \Bigg\rVert_{L^2(W \times \hat{Y})} \leq
C\eps,
\end{equation}
for all $\eps>0$. By \eqref{ineq: decay normal derivative} we deduce that
\begin{equation}
\label{partial u partial x_N = 0 alpha =1}
\frac{\partial u}{\partial x_N} (\bar{x},0) = 0,
\end{equation}
and that there exists a function $\zeta \in L^2(W)$ such that, up to a subsequence,
\[
\frac{1}{\eps^2}\frac{\partial \hat{u}_\eps}{\partial y_N}(\bar{x}, \bar{y}, b(\bar{y})) \rightharpoonup \zeta(\bar{x}),
\]
in $L^2(W \times Y)$ as $\eps \to 0$. The fact that $\zeta$ does not depend on $\bar{y}$ is an easy consequence of the following argument. Let
$\varphi \in C^\infty_c(W \times Y)$. Then
\[
\int_{W \times Y} \frac{1}{\eps^2}\frac{\partial \hat{u}_\eps}{\partial y_N}(\bar{x}, \bar{y}, b(\bar{y})) \frac{\partial \varphi}{\partial
y_i} \, d\bar{x} d\bar{y} = - \int_{W \times Y} \frac{1}{\eps^2}\frac{\partial^2 \hat{u}_\eps}{\partial y_N \partial y_i}(\bar{x}, \bar{y},
b(\bar{y})) \varphi \, d\bar{x} d\bar{y},
\]
and passing to the limit as $\eps \to 0$ we deduce that
\begin{equation}
\label{eq: weak derivative zeta}
\int_{W \times Y} \zeta \frac{\partial \varphi}{\partial y_i} \,d\bar{x} d\bar{y} = - \int_{W \times Y} \frac{\partial^2 u}{\partial x_N
\partial x_i}(\bar{x}, 0) \varphi \, d\bar{x} d\bar{y} = 0,
\end{equation}
where we have used that $\frac{\partial^2 u}{\partial x_N \partial x_i}(\bar{x}, 0) = 0$ because of \eqref{partial u partial x_N = 0 alpha
=1}. Equation\eqref{eq: weak derivative zeta} implies that $\zeta$ is weakly differentiable in $y_i$ and that $\frac{\partial \zeta}{\partial y_i} = 0$.\\
Taking the limit as $\eps \to 0$ in $L^2(W \times Y)$ in \eqref{eq: unfolding equality alfa < 1} we deduce that
\begin{equation}
\label{eq: unfolding equality limit}
\begin{split}
&\frac{\partial^2 u}{\partial x_i \partial x_j}(\bar{x}, 0) + \frac{\partial^2 u}{\partial x_i \partial x_N}(\bar{x}, 0) \frac{\partial
b(\bar{y})}{\partial y_j} + \frac{\partial^2 u}{\partial x_j \partial x_N}(\bar{x}, 0) \frac{\partial b(\bar{y})}{\partial y_i}\\
&+ \frac{\partial^2 u}{\partial x^2_N}(\bar{x}, 0) \frac{\partial b(\bar{y})}{\partial y_i}\frac{\partial b(\bar{y})}{\partial y_j} +
\zeta(\bar{x})\frac{\partial^2 b(\bar{y})}{\partial y_i \partial y_j} = 0.
\end{split}
\end{equation}
Because of \eqref{partial u partial x_N = 0 alpha =1} the first three summands in \eqref{eq: unfolding equality limit} are zero. Hence, \eqref{eq: unfolding equality limit} implies that
\begin{equation} \label{proof: identity der alpha < 1}
\frac{\partial^2 u}{\partial x^2_N}(\bar{x}, 0) \frac{\partial b(\bar{y})}{\partial y_i}\frac{\partial b(\bar{y})}{\partial y_j} +
\zeta(\bar{x})\frac{\partial^2 b(\bar{y})}{\partial y_i \partial y_j} = 0.
\end{equation}
Recall now that since $b$ is $Y$-periodic, its derivatives are periodic and with null average on $Y$. An integration in $Y$ in \eqref{proof: identity der alpha < 1} yields
\[
\frac{\partial^2 u}{\partial x^2_N}(\bar{x}, 0) \int_Y \frac{\partial b(\bar{y})}{\partial y_i}\frac{\partial b(\bar{y})}{\partial y_j}
d\bar{y} = 0,
\]
for almost all $\bar{x} \in W$. Since this holds for all $i,j=1, \dots, N-1$ we can in particular choose $i=j$ so that $
\frac{\partial^2 u}{\partial x^2_N}(\bar{x}, 0) \int_Y  \lvert \nabla b  \rvert^2 d\bar{y} =  0$, and since $b$ is non constant it must be
$\frac{\partial^2 u}{\partial x^2_N}(\bar{x}, 0) = 0$ for almost all $\bar{x} \in W$.\\

If $\alpha < 1$ we can argue in a similar way. Namely, we multiply each side of \eqref{eq: unfolding equality alfa < 1} by $\eps^{2-2\alpha}$ in order to obtain
\begin{equation}
\label{eq: unfolding equality alfa < 1 multiplied}
\begin{split}
&\frac{1}{\eps^{2\alpha}}\frac{\partial^2 \hat{u}_\eps}{\partial y_i \partial y_j}(\bar{x}, \bar{y},  b(\bar{y})) +
\frac{\eps^{1-\alpha}}{\eps^{\alpha+1}}\frac{\partial^2 \hat{u}_\eps}{\partial y_i\partial y_N}(\bar{x}, \bar{y}, b(\bar{y})) \frac{\partial
b(\bar{y})}{\partial y_j}\\
&+ \frac{\eps^{1-\alpha}}{\eps^{\alpha + 1}}\frac{\partial^2 \hat{u}_\eps}{\partial y_j\partial y_N}(\bar{x}, \bar{y}, b(\bar{y}))
\frac{\partial b(\bar{y})}{\partial y_i} + \frac{1}{\eps^{2\alpha}}\frac{\partial^2 \hat{u}_\eps}{\partial y^2_N}(\bar{x}, \bar{y},
b(\bar{y})) \frac{\partial b(\bar{y})}{\partial y_i}\frac{\partial b(\bar{y})}{\partial y_j}\\
&+ \frac{1}{\eps^{2\alpha}} \frac{\partial \hat{u}_\eps}{\partial y_N}(\bar{x}, \bar{y}, b(\bar{y}))\frac{\partial^2 b(\bar{y})}{\partial y_i
\partial y_j} = 0.
\end{split}
\end{equation}
Since $u(\bar{x},0)=0$, a.a $x \in W$, the first three summands in \eqref{eq: unfolding equality alfa < 1 multiplied} are vanishing as $\eps \to 0$. Then we deduce that
\[
\frac{\partial^2 u}{\partial x^2_N}(\bar{x},0) \frac{\partial b(\bar{y})}{\partial y_i}\frac{\partial b(\bar{y})}{\partial y_j} + \lim_{\eps
\to 0}\frac{1}{\eps^{2\alpha}} \frac{\partial \hat{u}_\eps}{\partial y_N}(\bar{x}, \bar{y}, b(\bar{y}))\frac{\partial^2 b(\bar{y})}{\partial
y_i \partial y_j} = 0.
\]
This first implies that
\[
\Bigg \lVert  \frac{1}{\eps^\alpha} \frac{\partial \hat{u}_\eps}{\partial y_N}(\bar{x},\bar{y}, b(\bar{y})) \frac{\partial^2 b}{\partial y_i
\partial y_j}  \Bigg \rVert_{L^2(W \times Y)} \leq C \eps^\alpha,
\]
hence $\frac{\partial u}{\partial x_N}(\bar{x},0) = 0$. Moreover, we deduce that up to a subsequence there exists $\zeta \in L^2(W)$ such that $\frac{1}{\eps^{2\alpha}}\frac{\partial \hat{u}_\eps}{\partial y_N}(\bar{x}, \bar{y}, b(\bar{y})) \rightharpoonup \zeta(\bar{x})$
in $L^2(W \times Y)$ as $\eps \to 0$. Then arguing as in the case $\alpha = 1$ we deduce that $\frac{\partial^2 u}{\partial x_N^2}(\bar{x},0) = 0$.
\end{proof}

\begin{proof}[Proof of Theorem \ref{thm: spectral conv tri weak}(iii),(iv)]
We first prove Claim $(iii)$. We will show that the Condition (C), defined in \cite[Def. 3.1]{ArrLamb} holds with $V(\Omega_\eps) =
H^3(\Omega_\eps) \cap H^1_0(\Omega_\eps)$ and $V(\Omega) = H^3(\Omega) \cap H^1_0(\Omega) \cap H^2_{0,*}(\Omega)$. In  \cite[Def. 3.1]{ArrLamb} we choose $$K_\eps = \{ x \in \Omega : x_N < -\eps \},$$ $\eps \in (0,1]$, $T_\eps : V(\Omega) \to V(\Omega_\eps)$ as in \eqref{def: T_eps} and $E_\eps: V(\Omega_\eps) \to H^m(\Omega)$ as the restriction operator $E_\eps u_\eps = u_\eps|_{\Omega}$, $\eps \in (0,1]$.  With this choices it is not difficult to verify that conditions (C1), (C2)(i), (C2)(iii), (C3)(i) and (C3)(ii) hold true. Then it is sufficient to prove the validity of conditions $(C2)(ii)$ and $(C3)(iii)$.

In order to show that $(C2)(ii)$ holds it is sufficient to use Lemma~\ref{lemma: degeneration H^3 H^1_0}(iii) and its proof. Indeed, if $\alpha > 3/2$ then $\lim_{\eps \to 0} \norma{T_\eps \varphi}_{H^3(\Omega_\eps \setminus K_\eps)} = 0$ for all $\varphi \in V(\Omega)$.

Condition $(C3)(iii)$ now follows directly from Lemma \ref{lemma: degeneration H^3 H^1_0}(iii), since we have proved that if $u_\eps \in V(\Omega_\eps)$ is such that $u_\eps|_{\Omega} \rightharpoonup u$ and $3/2 < \alpha < 5/2$, then $u \in V(\Omega)$.

Hence Condition (C) holds and \cite[Thm 3.5]{ArrLamb} now yields the claim.\\
The proof of Claim $(iv)$ is similar. We show that Condition (C) holds with $V(\Omega_\eps) = H^3(\Omega_\eps) \cap H^1_0(\Omega_\eps)$, $\eps \in (0,1]$, $V(\Omega) = H^3(\Omega) \cap H^1_0(\Omega) \cap H^3_{0,*}(\Omega)$, $T_\eps$ the extension-by-zero operator, and $E_\eps$ the restriction operator defined above. Then conditions (C1)-(C3) hold true. Note that Condition (C3)(iii) follows directly from Lemma
\ref{lemma: degeneration H^3 H^1_0}(iv).\\
\end{proof}

\section{Proof of Theorem \ref{thm: spectral conv tri weak}(ii)}
\label{sec: homog}
In this section, we shall consider the case $\alpha = 5/2$ of Theorem \ref{thm: spectral conv tri weak}. We refer to Section \ref{sec: auxil}
for the notation about $\Phi_\epsilon$, $h_\epsilon$, $T_\epsilon$, $C^k_\epsilon$,  $\hat{u}$, $w^{3,2}_{Per_Y}(Y \times (-\infty, 0))$. We
divide the proof in two subsections. Since the proof follows the same strategy as \cite{FerLamb}, \cite{ArrLamb}, we will only sketch the proofs and refer to \cite{ArrLamb} for further details in the case of the biharmonic operator with SBC.

\subsection{Macroscopic limit.}
Let $f_\epsilon\in L^2(\Omega_\epsilon)$ and $f\in L^2(\Omega)$ be such that $f_\epsilon \rightharpoonup f$ in $L^2(\R^N)$ as $\epsilon \to
0$, with the understanding that the functions are extended by zero outside their natural domains. Let $v_\epsilon \in V(\Omega_\epsilon) =
H^3(\Omega_\epsilon) \cap H^1_0(\Omega_\epsilon)$ be such that
\begin{equation}
\label{eq: Poisson prblm weak BC}
A_{\Omega_\epsilon} v_\epsilon = f_\epsilon,
\end{equation}
for all $\epsilon >0$ small enough. Then $\norma{v_\epsilon}_{H^3(\Omega_\epsilon)} \leq M$ for all $\epsilon > 0$ sufficiently small,
hence, possibly passing to a subsequence there exists $v \in H^3(\Omega)\cap H^1_0(\Omega)$ such that $v_\epsilon \rightharpoonup v$
in $H^3(\Omega)$ and $v_\epsilon \rightarrow v$ in $L^2(\Omega)$.\\
Let $\varphi \in V(\Omega) = H^3(\Omega) \cap H^1_0(\Omega)$ be a fixed test function. Since $T_\epsilon \varphi \in
V(\Omega_\epsilon)$, by \eqref{eq: Poisson prblm weak BC} we get
\begin{equation}
\label{eq: Poisson 1 weak BC}
\int_{\Omega_\epsilon} D^3v_\epsilon : D^3 (T_\epsilon \varphi) \, \diff{x} + \int_{\Omega_\epsilon}v_\epsilon T_\epsilon \varphi\, \diff{x} =
\int_{\Omega_\epsilon} f_\epsilon T_\epsilon \varphi\, \diff{x},
\end{equation}
and passing to the limit as $\epsilon \to 0$ we have that
\[
\int_{\Omega_\epsilon} v_\epsilon T_\epsilon \varphi\, \diff{x} \to \int_{\Omega} v \varphi \,\diff{x}, \quad \int_{\Omega_\epsilon}
f_\epsilon T_\epsilon \varphi\, \diff{x} \to \int_{\Omega} f \varphi \,\diff{x}.
\]

Now consider the first integral in the right-hand side of \eqref{eq: Poisson 1 weak BC}. Let us define $K_\epsilon = W \times (-1,
-\epsilon)$. By splitting the integral in three terms corresponding to $\Omega_\epsilon \setminus \Omega$, $\Omega \setminus K_\epsilon$ and
$K_\epsilon$ and by arguing as in \cite[Section 8.3]{ArrLamb} one can show that
\[
\int_{K_\epsilon} D^3v_\epsilon : D^3 (T_\epsilon \varphi) \, \diff{x} \to \int_\Omega D^3 v : D^3 \varphi \, \diff{x}, \quad
\int_{\Omega_\epsilon \setminus \Omega} D^3 v_\epsilon : D^3 (T_\epsilon \varphi) \, \diff{x} \to 0,
\]
as $\epsilon \to 0$. Let $Q_\eps = \widehat{W}_\eps \times (-\eps, 0)$. We split again the remaining integral in two summands,
\begin{equation}\label{eq: integralsplit weak BC}
\int_{\Omega_\epsilon \setminus K_\epsilon} D^3v_\epsilon : D^3 (T_\epsilon \varphi) \, \diff{x} = \int_{\Omega_\epsilon \setminus (K_\epsilon
\cup Q_\epsilon)} D^3v_\epsilon : D^3 (T_\epsilon \varphi) \, \diff{x} + \int_{Q_\epsilon} D^3v_\epsilon : D^3 (T_\epsilon \varphi) \, \diff{x}.
\end{equation}
Again, by arguing as in \cite[Section 8.3]{ArrLamb} it is possible to prove that
\[
\int_{\Omega_\epsilon \setminus (K_\epsilon \cup Q_\epsilon)} D^3v_\epsilon : D^3 (T_\epsilon \varphi) \, \diff{x} \to 0,
\]
as $\epsilon \to 0$. We now require two technical lemmata.

\begin{lemma}
\label{lemma: convergence b weak}
For all $y \in Y \times (-1,0)$ and $i,j,k = 1,\dots,N$ the functions $\hat{h}_\epsilon(\bar{x},y)$, $\widehat{\frac{\partial
h_\epsilon}{\partial x_i}}(\bar{x},y)$, $\widehat{\frac{\partial^2 h_\epsilon}{\partial x_i\partial x_j}}(\bar{x},y)$ and
$\widehat{\frac{\partial^3 h_\epsilon}{\partial x_i\partial x_j \partial x_k}}(\bar{x},y)$ are independent of $\bar{x}$. Moreover,
\[
\hat{h}_\epsilon(\bar{x},y) = O(\epsilon^{5/2}), \:\: \widehat{\frac{\partial h_\epsilon}{\partial x_i}}(\bar{x},y) = O(\epsilon^{3/2}), \:\: \widehat{\frac{\partial^2 h_\epsilon}{\partial x_i \partial x_j}}(\bar{x},y) = O(\epsilon^{1/2}) ,
\]
as $\epsilon \to 0$, for all $i,j = 1,\dots,N$, uniformly in $y \in Y \times (-1,0)$, and
\[
\epsilon^{1/2} \widehat{\frac{\partial^3 h_\epsilon}{\partial x_i\partial x_j \partial x_k}} (\bar{x},y) \to \frac{\partial^3 (b(\bar{y})(y_N
+ 1)^4)}{\partial y_i \partial y_j \partial y_k},
\]
as $\epsilon \to 0$, for all $i,j,k = 1,\dots,N$, uniformly in $y \in Y \times (-1,0)$.
\end{lemma}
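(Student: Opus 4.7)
The strategy is a direct computation exploiting the explicit form of $h_\epsilon$ combined with the $Y$-periodicity of $b$. On the layer $\{-\epsilon < x_N < g_\epsilon(\bar{x})\}$ the function $h_\epsilon$ depends on $\bar{x}$ only through $b(\bar{x}/\epsilon)$ (both directly and via $g_\epsilon = \epsilon^\alpha b(\bar{x}/\epsilon)$). Under the unfolding map $\bar{x} \mapsto \epsilon[\bar{x}/\epsilon] + \epsilon \bar{y}$, periodicity gives $b(\bar{x}/\epsilon) \mapsto b(\bar{y})$, so I would start by writing
\[
\hat{h}_\epsilon(\bar{x},\bar{y},y_N) \;=\; \epsilon^{5/2}\, b(\bar{y}) \left( \frac{y_N+1}{1+\epsilon^{3/2}\,b(\bar{y})} \right)^{\!4},
\]
which is manifestly independent of $\bar{x}$. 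Since $h_\epsilon$ is $C^3$ on the closure of its support (by Lemma \ref{lemma: h_eps}), the same computation, carried out for each $\partial^\gamma_x h_\epsilon$ with $|\gamma|\le 3$, will be $\bar{x}$-independent after unfolding.

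For the size estimates, the key identity is the chain rule relation
\[
\widehat{\partial^\gamma_x h_\epsilon} \;=\; \epsilon^{-|\gamma|}\, \partial^\gamma_y \hat{h}_\epsilon,
\]
which follows from $\hat{h}_\epsilon(\bar{x},\bar{y},y_N) = h_\epsilon\bigl(\epsilon[\bar x/\epsilon]+\epsilon\bar y,\epsilon y_N\bigr)$. Starting from the closed form for $\hat{h}_\epsilon$ above, I would check that $\partial^\gamma_y \hat{h}_\epsilon = O(\epsilon^{5/2})$ uniformly on $Y\times(-1,0)$ for every $|\gamma|\le 3$, because each $y$-differentiation only produces bounded rational expressions in $b(\bar{y})$, $\nabla b(\bar{y})$, $D^2 b(\bar{y})$, $D^3 b(\bar{y})$ and powers of $(1+\epsilon^{3/2}b)$, none of which destroy the $\epsilon^{5/2}$ prefactor. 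The three magnitude claims $O(\epsilon^{5/2})$, $O(\epsilon^{3/2})$, $O(\epsilon^{1/2})$ then follow by dividing by $\epsilon^{|\gamma|}$.

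Finally, for the convergence of the rescaled third derivative, I would observe that $(1+\epsilon^{3/2}b(\bar{y}))^{-4} \to 1$ uniformly on $Y$ as $\epsilon \to 0$, together with all its $\bar{y}$-derivatives up to order $3$ (by $C^4$-smoothness of $b$ and boundedness from below of $1+\epsilon^{3/2}b$). Consequently
\[
\epsilon^{-5/2}\,\hat{h}_\epsilon(\bar{y},y_N) \;\longrightarrow\; b(\bar{y})\,(y_N+1)^4 \quad \text{in } C^3\bigl(\overline{Y\times(-1,0)}\bigr),
\]
and applying three $y$-derivatives and using $\widehat{\partial^3_{x_i x_j x_k} h_\epsilon} = \epsilon^{-3}\,\partial^3_{y_i y_j y_k}\hat{h}_\epsilon$ yields
\[
\epsilon^{1/2}\, \widehat{\frac{\partial^3 h_\epsilon}{\partial x_i \partial x_j \partial x_k}} \;=\; \epsilon^{-5/2}\,\partial^3_{y_i y_j y_k} \hat{h}_\epsilon \;\longrightarrow\; \frac{\partial^3(b(\bar{y})(y_N+1)^4)}{\partial y_i\partial y_j\partial y_k}
\]
uniformly, which is the stated limit. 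The only mildly delicate point is keeping the bookkeeping of the chain rule tidy when confirming that every $y$-derivative of $\hat{h}_\epsilon$ really retains the prefactor $\epsilon^{5/2}$; this is routine once the closed form above is in hand.
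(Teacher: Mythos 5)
Your proof is correct and is essentially the paper's own argument: the paper disposes of this lemma by citing the analogous computations for the SBC case in \cite{FerLamb} and \cite{ArrLamb}, and your explicit calculation (the closed form $\hat{h}_\epsilon=\epsilon^{5/2}b(\bar{y})\big(\tfrac{y_N+1}{1+\epsilon^{3/2}b(\bar{y})}\big)^{4}$ via $Y$-periodicity of $b$, the scaling identity $\widehat{\partial^\gamma_x h_\epsilon}=\epsilon^{-|\gamma|}\partial^\gamma_y\hat{h}_\epsilon$, and the uniform $C^3$ convergence of $\epsilon^{-5/2}\hat{h}_\epsilon$ to $b(\bar{y})(y_N+1)^4$) is exactly that computation adapted to $\alpha=5/2$. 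Nothing is missing; the only point worth making explicit is that for $y_N\in(-1,0)$ one is always on the second branch of the definition of $h_\epsilon$, which you implicitly use and which is indeed the case since $x_N=\epsilon y_N\in(-\epsilon,0)$.
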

\begin{proof}
We refer to \cite[Lemma 4]{FerLamb} and \cite[Lemma 8.27]{ArrLamb}, where similar computations were carried out in the case of strong intermediate boundary conditions.
\end{proof}

\begin{lemma}
Let $v_\epsilon \in V(\Omega_\epsilon) = H^3(\Omega_\epsilon) \cap H^1_0(\Omega_\epsilon)$ be such that
$\norma{v_\epsilon}_{H^3(\Omega_\epsilon)} \leq M$ for all $\epsilon>0$. Assume that up to a subsequence $v_\eps|_{\Omega} \rightharpoonup v$ in $H^3(\Omega)$. Let $\varphi$ be a fixed function in $V(\Omega)$. Let $\hat{v}\in L^2(W,
w^{3,2}_{\textup{Per}_Y}(Y\times (-\infty,0)))$ be as in Lemma~\ref{lemma: unfolding convergence poly}. Then
\begin{equation}
\begin{split}
&\int_{Q_\epsilon} D^3 v_{\epsilon} : D^3(T_\epsilon \varphi) \, \diff{x} \rightarrow\\
& - \int_W \int_{Y \times (-1,0)} (D^3_y(\hat{v}) : D^3 (b(\bar{y})(1+y_N)^4) \,\diff{y}\, \frac{\partial\varphi}{\partial x_N}(\bar{x}, 0)
\diff{\bar{x}}.
\end{split}
\end{equation}
\end{lemma}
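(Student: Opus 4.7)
The plan is to transform the integral on $Q_\eps$ via the unfolding operator and to identify a weak/strong convergence pair for the two factors $\eps^{-5/2}D^3_y\hat v_\eps$ and $\eps^{-5/2}D^3_y\widehat{T_\eps\varphi}$. Applying Lemma \ref{lemma: exact int formula} with $l=3$ and $a=-1$,
\[
\int_{Q_\eps}D^3 v_\eps:D^3(T_\eps\varphi)\,\diff x=\int_{\widehat W_\eps\times Y\times(-1,0)}\bigl(\eps^{-5/2}D^3_y\hat v_\eps\bigr):\bigl(\eps^{-5/2}D^3_y\widehat{T_\eps\varphi}\bigr)\,\diff\bar x\,\diff y.
\]
Since $\cP(v_\eps)$ is a polynomial of degree at most two in $y$, its third $y$-derivatives vanish, and Lemma \ref{lemma: unfolding convergence poly}(i)(b) applied to $V_\eps=\hat v_\eps-\cP(v_\eps)$ yields $\eps^{-5/2}D^3_y\hat v_\eps\rightharpoonup D^3_y\hat v$ in $L^2(W\times Y\times(-\infty,0))$, and therefore also in $L^2(W\times Y\times(-1,0))$ after restriction. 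It thus suffices to prove that the second factor converges strongly in $L^2(W\times Y\times(-1,0))$ to $-\frac{\partial\varphi}{\partial x_N}(\bar x,0)\,D^3_y\bigl(b(\bar y)(1+y_N)^4\bigr)$.

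To identify the limit of $\eps^{-5/2}D^3_y\widehat{T_\eps\varphi}$, I would write $\widehat{T_\eps\varphi}(\bar x,y)=\varphi(A,B)$ with $A=\eps[\bar x/\eps]+\eps\bar y$ and $B=\eps y_N-\hat h_\eps(\bar x,y)$, and apply Faà di Bruno to $\partial^3_{abc}(\varphi\circ\Phi_\eps)$, keeping in mind that only the last component of $\Phi_\eps$ has nonzero second- and third-order partials. The three resulting groups of terms are: a pure ``first-order'' term $\sum_{ijk}\varphi_{ijk}(A,B)\,\partial_a\Phi_i\,\partial_b\Phi_j\,\partial_c\Phi_k=O(\eps^3)$, which is killed by $\eps^{-5/2}$; a ``mixed'' term of the form $\varphi_{iN}(A,B)\,\partial^2_{bc}\hat h_\eps\,\partial_a\Phi_i$ with cyclic permutations, of order $O(\eps)\cdot O(\eps^{5/2})=O(\eps^{7/2})$ by Lemma \ref{lemma: convergence b weak}; and the ``third-order'' term $\varphi_N(A,B)\,\partial^3_{abc}\Phi_N=-\varphi_N(A,B)\,\partial^3_{abc}\hat h_\eps$, which is precisely of order $\eps^{5/2}$.

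Only the last piece contributes in the limit. Its rescaled version $-\varphi_N(A,B)\cdot\eps^{-5/2}\partial^3_{abc}\hat h_\eps$ is handled by combining two facts: first, $\eps^{-5/2}\partial^3_{abc}\hat h_\eps=\eps^{1/2}\widehat{\partial^3 h_\eps/\partial x_a\partial x_b\partial x_c}\to\partial^3\bigl(b(\bar y)(1+y_N)^4\bigr)/\partial y_a\partial y_b\partial y_c$ uniformly in $y$, by Lemma \ref{lemma: convergence b weak}; second, Lemma \ref{lemma: unfolding convergence poly}(ii) applied to $\psi=\partial_N\varphi\in H^2(\Omega)$ gives $\varphi_N(A,B)=\widehat{T_\eps(\partial_N\varphi)}\to\partial_N\varphi(\bar x,0)$ strongly in $L^2(W\times Y\times(-1,0))$. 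The product of these two convergences delivers the strong $L^2$-limit for $\eps^{-5/2}D^3_y\widehat{T_\eps\varphi}$, and the weak/strong pairing with $\eps^{-5/2}D^3_y\hat v_\eps\rightharpoonup D^3_y\hat v$ produces the claimed formula, the overall minus sign arising from $\partial^3_{abc}\Phi_N=-\partial^3_{abc}\hat h_\eps$. The main technical obstacle is the Faà di Bruno bookkeeping together with a uniform $L^2$-control on the lower-order chain-rule terms so that they can be safely paired against the weakly convergent factor; this works only because the surviving term has the exact scaling $\eps^{5/2}$ that matches the blow-up rate of $D^3_y\hat v_\eps$.
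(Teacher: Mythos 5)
Your proposal is correct and takes essentially the same route as the paper's proof: a chain-rule expansion of $D^3(T_\eps\varphi)$ into the three groups, unfolding via Lemma \ref{lemma: exact int formula}, vanishing of the lower-order groups, and a weak--strong pairing of $\eps^{-5/2}D^3_y\hat v_\eps$ (Lemma \ref{lemma: unfolding convergence poly}) against the uniformly convergent $\eps^{1/2}\widehat{D^3 h_\eps}$ (Lemma \ref{lemma: convergence b weak}) and the strongly convergent unfolded trace of $\partial_{x_N}\varphi$, with the same origin of the minus sign. The only cosmetic slip is invoking the exact integration formula ``with $a=-1$'': since $Q_\eps=\widehat W_\eps\times(-\eps,0)$ you should take $a=-\eps$, which is what produces the unfolded range $y_N\in(-1,0)$ appearing (correctly) in your display.
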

\begin{proof}
In the following calculations we use the index notation and we drop the summation symbols. We calculate
{\small
\begin{equation}
\label{conti11}
\begin{split}
&\int_{Q_\epsilon} D^3 v_{\epsilon} : D^3(T_\epsilon \varphi) \, \diff{x} = \int_{Q_\epsilon} \frac{\partial^3 v_\epsilon}{\partial x_i
\partial x_j \partial x_h} \frac{\partial^3 (\varphi \circ \Phi_\epsilon)}{\partial x_i \partial x_j \partial x_h} \, \diff{x}\\
&= \int_{Q_\epsilon} \frac{\partial^3 v_\epsilon}{\partial x_i \partial x_j \partial x_h} \frac{\partial^3 \varphi}{\partial x_k \partial x_l
\partial x_m}(\Phi_\epsilon(x))\, \frac{\partial \Phi_\epsilon^{(k)}}{\partial x_i} \frac{\partial \Phi_\epsilon^{(l)}}{\partial x_j}
\frac{\partial \Phi_\epsilon^{(m)}}{\partial x_h} \, \diff{x}\\
&+ \int_{Q_\epsilon} \frac{\partial^3 v_\epsilon}{\partial x_i \partial x_j \partial x_h} \frac{\partial^2 \varphi}{\partial x_k \partial
x_l}(\Phi_\epsilon(x))\, \Big[\frac{\partial \Phi_\epsilon^{(k)}}{\partial x_i} \frac{\partial^2 \Phi_\epsilon^{(l)}}{\partial x_j \partial
x_h} + \frac{\partial \Phi_\epsilon^{(k)}}{\partial x_j} \frac{\partial^2 \Phi_\epsilon^{(l)}}{\partial x_i \partial x_h} + \frac{\partial
\Phi_\epsilon^{(k)}}{\partial x_h} \frac{\partial^2 \Phi_\epsilon^{(l)}}{\partial x_i \partial x_j}\Big] \, \diff{x},\\
&+ \int_{Q_\epsilon} \frac{\partial^3 v_\epsilon}{\partial x_i \partial x_j \partial x_h} \frac{\partial \varphi}{\partial
x_k}(\Phi_\epsilon(x))\, \frac{\partial^3 \Phi_\epsilon^{(k)}}{\partial x_i \partial x_j \partial x_h}\, \diff{x}.
\end{split}
\end{equation}}
It is not difficult to prove that the first integral in the right-hand side of~\eqref{conti11} vanishes as $\epsilon \to 0$, see the proof of
\cite[Proposition 2]{FerLamb}.
We then consider the second integral in the right hand side of~\eqref{conti11}. Note that all the terms with $l\neq N$ vanish. Thus, without
loss of generality we set $l=N$. Consider separately the case $k \neq N$, and $k = N$.\\
\underline{Case $k \neq N$}: by the exact integration formula \eqref{eq: exact int formula} we obtain
\[
\begin{split}
\Bigg\lvert\int_{Q_\epsilon} \frac{\partial^3 v_\epsilon}{\partial x_i \partial x_j \partial x_h} &\frac{\partial^2 \varphi}{\partial x_k
\partial x_N}(\Phi_\epsilon(x))\, \delta_{ki} \frac{\partial^2 \Phi_\epsilon^{(N)}}{\partial x_j \partial x_h } \, \diff{x}\Bigg\rvert \\
&\leq C \epsilon^{1/2}\norma{\eps^{-5/2} \hat{v}_\epsilon}_{W^{3,2}(\widehat{W}_\eps \times Y \times (-1,0))} \norma*{\frac{\partial
\varphi}{\partial x_k \partial x_N}}_{L^2(Q_\eps)} \to 0,
\end{split}
\]
as $\eps \to 0$. \\
\underline{Case $k=N$}: in this case \eqref{eq: exact int formula} applied to \eqref{conti11} gives
\begin{equation}
\label{proof: vanish}
\begin{split}
&\int_{Q_\epsilon} D^3 v_{\epsilon} : D^3(T_\epsilon \varphi) \, \diff{x} =
\epsilon^{-5} \int_{\widehat{W_\epsilon}\times Y \times (-1,0)}\frac{\partial^3\hat{v_\epsilon}}{\partial y_i \partial y_j \partial y_h}
\frac{\partial^2 \varphi}{\partial x_N^2}(\widehat{\Phi_\epsilon}(y))\,\cdot\\
&\cdot\Bigg[ \frac{\partial \widehat{\Phi_\epsilon}^{(N)}}{\partial y_i} \frac{\partial^2 \widehat{\Phi_\epsilon}^{(N)}}{\partial y_j \partial
y_h} + \frac{\partial \widehat{\Phi_\epsilon}^{(N)}}{\partial y_j} \frac{\partial^2 \widehat{\Phi_\epsilon}^{(N)}}{\partial y_i \partial y_h}
+ \frac{\partial \widehat{\Phi_\epsilon}^{(N)}}{\partial y_h} \frac{\partial^2 \widehat{\Phi_\epsilon}^{(N)}}{\partial y_i \partial y_j}
\Bigg]\, \diff{\bar{x}} \diff{y},
\end{split}
\end{equation}
and since we are summing on the indexes $i,j,h \in {1,\dots, N}$, \eqref{proof: vanish} equals
 \[ 3 \epsilon^{-5} \int_{\widehat{W_\epsilon}\times Y \times (-1,0)} \frac{\partial^3 \hat{v_\epsilon}}{\partial y_i \partial y_j \partial
 y_h} \frac{\partial^2 (\varphi(\widehat{\Phi_\epsilon}(y)))}{\partial x_N^2}\, \frac{\partial \widehat{\Phi_\epsilon}^{(N)}}{\partial y_i}
 \frac{\partial^2 \widehat{\Phi_\epsilon}^{(N)}}{\partial y_j \partial y_h} \, \diff{\bar{x}} \diff{y}.\]
Note now that
\[
\frac{\partial \widehat{\Phi_\epsilon}^{(k)}}{\partial y_i } =
\begin{cases}
\epsilon\delta_{ki}, &\textup{if $k \neq N$},\\
\epsilon\delta_{Ni} - \epsilon\widehat{\frac{\partial h_\epsilon}{\partial x_i}}, &\textup{if $k=N$}.
\end{cases} \qquad
\frac{\partial^2 \widehat{\Phi_\epsilon}^{(k)}}{\partial y_i \partial y_j } =
\begin{cases}
0, &\textup{if $k \neq N$},\\
-\epsilon^2\widehat{\frac{\partial^2 h_\epsilon}{\partial x_i \partial x_j}}, &\textup{if $k=N$}.
\end{cases}
\]
Thus, we have
\begin{equation}
\label{conti12}
\begin{split}
& 3 \epsilon^{-5} \int_{\widehat{W_\epsilon}\times Y \times (-1,0)} \frac{\partial^3 \hat{v_\epsilon}}{\partial y_i \partial y_j \partial y_h}
\frac{\partial^2 (\varphi(\widehat{\Phi_\epsilon(y)}))}{\partial x_N^2}\, \frac{\partial \widehat{\Phi_\epsilon}^{(N)}}{\partial y_i}
\frac{\partial^2 \widehat{\Phi_\epsilon}^{(N)}}{\partial y_j \partial y_h } \, \diff{\bar{x}} \diff{y}\\
&= - 3\epsilon^{-2} \int_{\widehat{W_\epsilon}\times Y \times (-1,0)} \frac{\partial^3 \hat{v_\epsilon}}{\partial y_i \partial y_j \partial
y_h} \frac{\partial^2 (\varphi(\widehat{\Phi_\epsilon}(y)))}{\partial x_N^2} \Bigg(\delta_{Ni} - \widehat{\frac{\partial h_\epsilon}{\partial
x_i}}\Bigg) \widehat{\frac{\partial^2 h_\epsilon}{\partial x_j\partial x_h}} \, \diff{\bar{x}} \diff{y}.
\end{split}
\end{equation}
It is not difficult to see that the right-hand side of \eqref{conti12} vanishes as $\epsilon \to 0$, due to \eqref{eq: exact int formula} and Lemma \ref{lemma: convergence b weak}.
It remains to treat only the third integral in the right hand side of \eqref{conti11}.  We apply the exact integration formula \eqref{eq:
exact int formula} in order to obtain
\[
\begin{split}
&\epsilon \int_{\widehat{W_\epsilon}\times Y \times (-1,0)} \widehat{\frac{\partial^3 v_\epsilon}{\partial x_i \partial x_j \partial x_h}}
\frac{\partial \varphi}{\partial x_N}(\widehat{\Phi_\epsilon}(y))\, \widehat{\frac{\partial^3 \Phi_\epsilon^{(N)}}{\partial x_i \partial x_j
\partial x_h}} \,\diff{\bar{x}} \diff{y}\\
&= -\int_{\widehat{W_\epsilon}\times Y \times (-1,0)} \Bigg[\epsilon^{-5/2}\frac{\partial^3 \widehat{v_\epsilon}}{\partial y_i \partial y_j
\partial y_h}\Bigg] \: \Bigg[\frac{\partial \varphi}{\partial x_N}(\widehat{\Phi_\epsilon}(y))\Bigg]\:
\Bigg[\epsilon^{1/2}\widehat{\frac{\partial^3 h_\eps}{\partial x_i \partial x_j \partial x_h}}\Bigg] \,\diff{\bar{x}} \diff{y}.\\
\end{split}
\]
By Lemma \ref{lemma: unfolding convergence poly} it is clear that $ \epsilon^{-5/2}\frac{\partial^3 \widehat{v_\epsilon}}{\partial y_i \partial y_j \partial y_h} \to \frac{\partial^3 \hat{v}}{\partial y_i
\partial y_j \partial y_h}$,
weakly in $L^2(W \times Y \times (-\infty,0))$ as $\eps \to 0$. Moreover, by Lemma \ref{lemma: convergence b weak} $\epsilon^{1/2}\widehat{\frac{\partial^3 \Phi_\epsilon^{(N)}}{\partial x_i \partial x_j \partial x_h}} \to -\frac{\partial^3
(b(\bar{y})(1+y_N)^4)}{\partial y_i \partial y_j \partial y_h}$,
uniformly in $W \times Y \times (-1,0)$ as $\eps \to 0$. Hence,
\[
\begin{split}
&\int_{\widehat{W_\epsilon}\times Y \times (-1,0)} \Bigg[\epsilon^{-5/2}\frac{\partial^3 \widehat{v_\epsilon}}{\partial y_i \partial y_j
\partial y_h}\Bigg] \: \Bigg[\frac{\partial \varphi}{\partial x_N}(\widehat{\Phi_\epsilon}(y))\Bigg]\:
\Bigg[\epsilon^{1/2}\widehat{\frac{\partial^3 \Phi_\epsilon^{(N)}}{\partial x_i \partial x_j \partial x_h}}\Bigg] \,\diff{\bar{x}} \diff{y}\\
&\to -\int_{W\times Y \times (-1,0)} \frac{\partial^3 \hat{v}}{\partial y_i \partial y_j \partial y_h} \: \frac{\partial \varphi}{\partial
x_N}(\bar{x},0)\: \frac{\partial^3 (b(\bar{y})(1+y_N)^4)}{\partial y_i \partial y_j \partial y_h} \,\diff{\bar{x}} \diff{y}.\\
\end{split}
\]
as $\eps \to 0$.
\end{proof}

The previous discussion yields the following
\begin{theorem}
\label{thm: macroscopic limit weak}
Let $f_\epsilon \in L^2(\Omega_\epsilon)$ and $f\in L^2(\Omega)$ be such that $f_\epsilon|_{\Omega} \rightharpoonup f$ in $L^2(\Omega)$. Let
$v_\epsilon\in H^3(\Omega_\epsilon) \cap H^1_0(\Omega_\epsilon)$ be the solutions to $A_{\Omega_\epsilon} v_\epsilon = f_\epsilon$.
Then, possibly passing to a subsequence, there exists $v\in H^3(\Omega) \cap H^1_0(\Omega)$ and $\hat{v} \in
L^2(W,w^{3,2}_{Per_Y}(Y\times(\infty,0)))$ such that $v_\epsilon|_{\Omega}\rightharpoonup v$ in $H^3(\Omega)$, $v_\epsilon|_{\Omega} \to v$ in $L^2(\Omega)$
and such that statements $(a)$ and $(b)$ in Lemma~\ref{lemma: unfolding convergence poly} hold. Moreover,
\begin{multline*}
- \int_W \int_{Y \times (-1,0)} (D^3_y(\hat{v}) : D^3 (b(\bar{y})(1+y_N)^4) \,\diff{y}\, \frac{\partial\varphi}{\partial x_N}(\bar{x}, 0)
\diff{\bar{x}}\\
+ \int_{\Omega} D^3 v : D^3\varphi + u\varphi \,\diff{x} = \int_{\Omega} f \varphi\, \diff{x},
\end{multline*}
for all $\varphi \in H^3(\Omega) \cap H^1_0(\Omega)$.
\end{theorem}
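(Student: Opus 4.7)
The plan is to assemble the result from ingredients already available: an a priori bound, weak compactness in $H^3$, the unfolding convergence in Lemma~\ref{lemma: unfolding convergence poly}, and the key computation carried out in the preceding lemma that identifies the ``strange'' limit on $Q_\eps$. Starting from the equation $A_{\Omega_\eps} v_\eps = f_\eps$ tested against $v_\eps$ itself, the coercivity of $Q_{\Omega_\eps}$ gives $\norma{v_\eps}_{H^3(\Omega_\eps)} \leq \norma{f_\eps}_{L^2(\Omega_\eps)} \leq M$ uniformly in $\eps$. Passing to a subsequence, the restrictions $v_\eps|_\Omega$ converge weakly to some $v \in H^3(\Omega)$ and strongly in $L^2(\Omega)$; since $v_\eps \in H^1_0(\Omega_\eps)$, one checks $v \in H^1_0(\Omega)$ by taking traces on $W \times \{-1\}$ and on $\partial W \times (-1,0)$ and using continuity of the trace for the restrictions (the trace on $W \times \{0\}$ follows from the argument in Lemma~\ref{lemma: degeneration H^3 H^1_0}). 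Then Lemma~\ref{lemma: unfolding convergence poly} produces a function $\hat{v} \in L^2(W, w^{3,2}_{\text{Per}_Y}(Y \times (-\infty,0)))$ such that conclusions $(a)$ and $(b)$ there hold along the same subsequence.

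Next, I would take an arbitrary test function $\varphi \in H^3(\Omega) \cap H^1_0(\Omega)$ and plug the pullback $T_\eps \varphi \in V(\Omega_\eps)$ (see \eqref{def: T_eps}) into the weak formulation \eqref{triharmonic general}:
\begin{equation*}
\int_{\Omega_\eps} D^3 v_\eps : D^3 (T_\eps \varphi)\, dx + \int_{\Omega_\eps} v_\eps\, T_\eps \varphi\, dx = \int_{\Omega_\eps} f_\eps\, T_\eps \varphi\, dx.
\end{equation*}
The lower order terms are routine: since $T_\eps \varphi \to \varphi$ strongly in $L^2(\R^N)$ (extended by zero, by Lemma~\ref{lemma: h_eps} on $h_\eps$), the weak convergence of $f_\eps$ and strong convergence of $v_\eps$ give $\int_{\Omega_\eps} f_\eps T_\eps \varphi \to \int_\Omega f \varphi$ and $\int_{\Omega_\eps} v_\eps T_\eps \varphi \to \int_\Omega v \varphi$.

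The main work is on the $D^3$-pairing, which I would split according to $\Omega_\eps = K_\eps \cup (\Omega \setminus K_\eps) \cup (\Omega_\eps \setminus \Omega)$ with $K_\eps = W \times (-1,-\eps)$, and further decompose $\Omega_\eps \setminus K_\eps = (\Omega_\eps \setminus (K_\eps \cup Q_\eps)) \cup Q_\eps$, $Q_\eps = \widehat{W}_\eps \times (-\eps,0)$, as in \eqref{eq: integralsplit weak BC}. On $K_\eps$ one has $T_\eps \varphi = \varphi$ and $D^3(T_\eps \varphi) = D^3 \varphi$, so weak convergence of $v_\eps$ in $H^3(\Omega)$ gives $\int_{K_\eps} D^3 v_\eps : D^3 \varphi \to \int_\Omega D^3 v : D^3 \varphi$. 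The integrals on $\Omega_\eps \setminus \Omega$ and on $\Omega_\eps \setminus (K_\eps \cup Q_\eps)$ vanish exactly as in \cite[Section 8.3]{ArrLamb}, since $|\Omega_\eps \setminus \Omega|, |\Omega \setminus K_\eps \setminus Q_\eps| \to 0$ and the derivatives of $T_\eps \varphi$ on these sets are controlled by Lemma~\ref{lemma: h_eps}. The remaining integral $\int_{Q_\eps} D^3 v_\eps : D^3 (T_\eps \varphi)\, dx$ is precisely the quantity evaluated in the preceding lemma, whose limit is $-\int_W \int_{Y \times (-1,0)} D^3_y(\hat{v}) : D^3(b(\bar{y})(1+y_N)^4)\, dy\, \frac{\partial \varphi}{\partial x_N}(\bar{x},0)\, d\bar{x}$. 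Combining all the pieces yields the stated identity.

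The delicate step is the $Q_\eps$ integral, and the heart of the obstacle, namely the need to multiply by the correct powers of $\eps$ to pair the blowing-up derivatives of $T_\eps \varphi$ (scaling like $\eps^{\alpha - l} = \eps^{5/2 - l}$) against the rescaled unfolding $\eps^{-5/2}D^3_y \widehat{v}_\eps$ with its weak limit $D^3_y \hat{v}$, has already been absorbed by the preceding lemma via Lemma~\ref{lemma: convergence b weak}. Thus the only remaining care is to verify that the test function $\varphi$ need only be in $H^3(\Omega) \cap H^1_0(\Omega)$ rather than more regular, which is justified because all intermediate estimates were carried out with $L^2$-norms of $D^k \varphi$ for $k \leq 3$ together with the uniform boundedness of $\eps^{1/2}\widehat{D^3 h_\eps}$ from Lemma~\ref{lemma: convergence b weak}.
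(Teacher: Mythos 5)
Your proposal is correct and follows essentially the same route as the paper: testing against $T_\eps\varphi$, splitting the third-order pairing over $K_\eps$, $\Omega_\eps\setminus\Omega$, $\Omega_\eps\setminus(K_\eps\cup Q_\eps)$ and $Q_\eps$ exactly as in \eqref{eq: integralsplit weak BC}, handling the lower-order terms by strong/weak $L^2$ convergence, and invoking the preceding lemma (built on Lemma \ref{lemma: convergence b weak} and Lemma \ref{lemma: unfolding convergence poly}) to identify the limit of the $Q_\eps$ integral as the strange boundary term. Nothing essential is missing or different from the paper's argument.
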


\subsection{Microscopic limit.}
Let $\psi \in C^{\infty}(\overline{W} \times \overline{Y} \times ]-\infty, 0])$ be such that $\supp{\psi} \subset C \times \overline{Y} \times
[d,0]$ for some compact set $C\subset W$, $d\in]-\infty, 0[$, and assume that $\psi(\bar{x},\bar{y},0) = 0$ for all $(\bar{x},\bar{y})\in W
\times Y$. Let $\psi$ be $Y$-periodic in the variable $\bar{y}$. We set
\[
\psi_\epsilon(x) = \epsilon^{\frac{5}{2}} \psi \Big(\bar{x},\frac{\bar{x}}{\epsilon}, \frac{x_N}{\epsilon}\Big),
\]
for all $\epsilon >0$, $x \in W \times ]-\infty, 0]$. Then $T_\epsilon \psi_\epsilon \in V(\Omega_\epsilon)$ for sufficiently small
$\epsilon$, hence we can plug it in the weak formulation of the problem in $\Omega_\epsilon$ in order to get
\begin{equation}
\label{eq: tri weak micro 1}
\int_{\Omega_\epsilon} D^3v_\epsilon : D^3(T_\epsilon \psi_\epsilon)\,\diff{x} + \int_{\Omega_\epsilon} v_\epsilon T_\epsilon \psi_\epsilon
\,\diff{x} = \int_{\Omega_\epsilon} f_\epsilon T_\epsilon \psi_\epsilon \,\diff{x}.
\end{equation}
It is not difficult to prove that
\begin{equation}
\label{eq: tri weak micro 2}
\int_{\Omega_\eps} v_\eps T_\eps \psi_\eps\, dx \to 0, \quad\quad \int_{\Omega_\eps} f_\eps T_\eps \psi \eps \, dx \to 0,
\end{equation}
as $\eps \to 0$, and by arguing as in \cite[Eq. (8.20), p. 29]{ArrLamb} we deduce that
\begin{equation}
\label{eq: tri weak micro 3}
\int_{\Omega_\eps \setminus \Omega} D^3 v_\eps : D^3 (T_\eps\psi_\eps) \, dx \to 0,
\end{equation}
as $\eps \to 0$. Moreover, by arguing as in \cite[Lemma 8.47]{ArrLamb} it is possible to prove that
\begin{equation}
\label{eq: tri weak micro 4}
\int_\Omega D^3v_\epsilon : D^3 (T_\epsilon \psi_\epsilon)\,\diff{x} \to \int_{W\times Y\times (-\infty,0)} D_y^3\hat{v}(\bar{x},y) :
D_y^3\psi(\bar{x},y)\,\diff{\bar{x}} \diff{y},
\end{equation}
as $\eps \to 0$. Then we have the following
\begin{theorem}
\label{thm: conditions on v weak}
Let $\hat{v} \in L^2(W,w^{3,2}_{Per_Y}(Y\times(-\infty,0)))$ be the function from Theorem~\ref{thm: macroscopic limit weak}. Then
\[
\int_{W\times Y \times (-\infty,0)} D^3_y\hat{v}(\bar{x},y) : D^3_y\psi (\bar{x},y) \diff{\bar{x}} \diff{y} = 0,
\]
for all $\psi \in L^2(W,w^{3,2}_{Per_Y}(Y\times(-\infty,0)))$ such that $\psi(\bar{x},\bar{y},0) = 0$ on $W\times Y$. Moreover, for any
$i,j=1,\dots, N-1$, we have
\begin{equation}
\label{eq: Casado new}
\frac{\partial^2\hat{v}}{\partial y_i \partial y_j}(\bar{x}, \bar{y},0) = - \frac{\partial^2 b}{\partial y_i \partial y_j}(\bar{y})
\frac{\partial v}{\partial x_N}(\bar{x}, 0) \quad\quad \textup{on $W \times Y$},
\end{equation}
\end{theorem}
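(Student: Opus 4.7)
The statement splits into a microscopic cell identity and a boundary relation. For the cell identity I would take the oscillating test function $T_\eps \psi_\eps$ with $\psi_\eps(x) = \eps^{5/2} \psi(\bar x, \bar x/\eps, x_N/\eps)$, insert it into the weak equation $A_{\Omega_\eps} v_\eps = f_\eps$ to obtain \eqref{eq: tri weak micro 1}, and then pass to the limit $\eps \to 0$. The factor $\eps^{5/2}$ in the ansatz, together with \eqref{eq: tri weak micro 2}, immediately kills both the zeroth-order term and the right-hand side. By \eqref{eq: tri weak micro 3} the contribution on $\Omega_\eps \setminus \Omega$ also vanishes, so only $\int_\Omega D^3 v_\eps : D^3(T_\eps \psi_\eps)\,dx$ survives; this quantity converges by \eqref{eq: tri weak micro 4} to $\int_{W \times Y \times (-\infty,0)} D^3_y \hat v : D^3_y \psi \, d\bar x\, dy$. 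Hence the claimed orthogonality holds for every admissible smooth $\psi$ that is $Y$-periodic in $\bar y$, compactly supported in $\bar x$ and in $y_N$, and vanishing on $\{y_N = 0\}$.

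For the boundary identity \eqref{eq: Casado new} I would invoke Lemma \ref{lemma: degeneration H^3 H^1_0}(ii) directly, applied to the sequence $(v_\eps)_{\eps>0}$. Indeed $v_\eps \in H^3(\Omega_\eps) \cap H^1_0(\Omega_\eps) \subset H^3(\Omega_\eps) \cap H^1_{0,*}(\Omega_\eps)$, and by Theorem \ref{thm: macroscopic limit weak} we have $v_\eps|_\Omega \rightharpoonup v$ in $H^3(\Omega)$; since $\alpha = 5/2$, the hypotheses of case (ii) are met. The function $\hat v$ featuring in the conclusion of that lemma (defined via \eqref{limit: U_eps convergence}) coincides with the $\hat v$ produced here by Lemma \ref{lemma: unfolding convergence poly}, since both are characterised as the weak $L^2(W, H^3(Y \times (d,0)))$-limit of the same rescaled sequence $\eps^{-5/2} U_\eps$. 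The conclusion \eqref{eq: limit case hatu eq} of the lemma is then exactly \eqref{eq: Casado new}, with $u$ replaced by $v$.

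\textbf{Main obstacle.} The heavy technical work (anisotropic unfolding, weighted trace estimates, handling of the fast-oscillating boundary) has already been isolated inside Lemma \ref{lemma: degeneration H^3 H^1_0}, so here the only genuinely non-mechanical point is the density argument needed to extend the cell identity from smooth, compactly supported, $Y$-periodic test functions vanishing at $y_N = 0$ to the full space $\{\psi \in L^2(W, w^{3,2}_{\mathrm{Per}_Y}(Y \times (-\infty,0))) : \psi(\bar x, \bar y, 0) = 0\}$. This requires a cutoff-and-mollification scheme in the unbounded $y_N$-direction which preserves simultaneously the $Y$-periodicity in $\bar y$, the vanishing trace at $y_N = 0$, and the $L^2$-integrability of third derivatives; the construction mirrors the analogous steps carried out for the biharmonic case in \cite{ArrLamb} and \cite{FerLamb}, and can be adapted line by line. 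Once this is in place, the two statements of the theorem follow from the computations sketched above.
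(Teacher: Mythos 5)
Your proposal matches the paper's own proof: the orthogonality identity is obtained exactly as you describe, by plugging the rescaled oscillating test function into the weak formulation and passing to the limit via \eqref{eq: tri weak micro 1}--\eqref{eq: tri weak micro 4} (the paper defers the remaining density/extension details to the analogous biharmonic argument in \cite{ArrLamb}), and \eqref{eq: Casado new} is deduced precisely by applying Lemma \ref{lemma: degeneration H^3 H^1_0} in the case $\alpha = 5/2$ to the sequence $(v_\eps)_\eps$, identifying its $\hat v$ with the one from Lemma \ref{lemma: unfolding convergence poly}. No substantive difference from the paper's argument.
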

\begin{proof}
We need only to prove~\eqref{eq: Casado new} since the first part of the statement follows from \eqref{eq: tri weak micro 1}, \eqref{eq: tri
weak micro 2}, \eqref{eq: tri weak micro 3}, \eqref{eq: tri weak micro 4} (see also the proof of \cite[Theorem 8.53]{ArrLamb}). By applying
Lemma \ref{lemma: degeneration H^3 H^1_0}, case $\alpha = 5/2$ to $v_\eps \in H^3(\Omega_\eps) \cap H^1_0(\Omega_\eps)$ we deduce the validity
of~\eqref{eq: Casado new}.
\end{proof}

\begin{lemma}
\label{lemma: V weak}
There exists $V\in w^{3,2}_{Per_Y}(Y \times (-\infty,0))$ satisfying the equation
\begin{equation}
\label{eq: variational V weak}
\int_{Y \times (-\infty,0)} D^3 V : D^3 \psi \,\diff{y} = 0,
\end{equation}
for all $\psi \in w^{3,2}_{Per_Y}(Y \times (-\infty,0))$ such that $\psi(\bar{y},0) = 0$ on $Y$, and the boundary condition
\[
V(\bar{y},0) = b(\bar{y}), \quad\quad\textup{on $Y$}.
\]
Function $V$ is unique up to a sum of a monomial in $y_N$ of the form $a y_N^2$. Moreover $V \in W^{6,2}_{Per_Y}(Y \times (d,0))$ for any $d < 0$ and it satisfies the equation
\[
\Delta^3 V = 0, \quad\quad\textup{in $Y \times (d,0) $},
\]
subject to the boundary conditions
\[
\begin{cases}
\frac{\partial^2(\Delta V)}{\partial y_N^2} + 2 \frac{\partial^2}{\partial y_N^2}(\Delta_{N-1}V) = 0, &\textup{on $Y $,}\\
\frac{\partial^3 V}{\partial y_N^3} (\bar{y},0) = 0, &\textup{on $Y$}.
\end{cases}
\]
\end{lemma}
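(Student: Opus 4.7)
The plan is to construct $V$ variationally: first lift the Dirichlet datum $b$ by an auxiliary function $\tilde V$ of compact $y_N$-support, then reduce to a homogeneous problem $W = V - \tilde V$ solved by Lax--Milgram on a quotient space, and finally derive the strong form and the natural boundary conditions by elliptic regularity and the Triharmonic Green Formula. Concretely, I would pick $\chi \in C^\infty_c((-\infty, 0])$ with $\chi(0) = 1$ and $\chi \equiv 0$ on $(-\infty, -1]$ and set $\tilde V(\bar y, y_N) := \chi(y_N) b(\bar y)$; since $b \in C^4(W)$ and $\chi$ has compact support, $\tilde V \in w^{3,2}_{\mathrm{Per}_Y}(Y \times (-\infty, 0))$ and $\tilde V(\bar y, 0) = b(\bar y)$. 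Writing $V = \tilde V + W$, the problem reduces to finding $W$ in
\[
\mathcal{H} := \{ W \in w^{3,2}_{\mathrm{Per}_Y}(Y \times (-\infty, 0)) : W(\bar y, 0) = 0 \text{ on } Y \}
\]
such that $\int D^3 W : D^3 \psi \, dy = -\int D^3 \tilde V : D^3 \psi \, dy$ for every $\psi \in \mathcal{H}$.

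Second, I would establish existence by applying Lax--Milgram on the quotient $\mathcal{H}/\mathcal{N}$, where $\mathcal{N} := \ker D^3 \cap \mathcal{H}$ consists, by $Y$-periodicity and the trace condition at $y_N = 0$, precisely of the polynomials $c_1 y_N + c_2 y_N^2$. A Poincaré--Wirtinger inequality applied strip by strip on $Y \times (-k-1, -k)$, together with a trace estimate at $y_N = 0$, shows that $\| D^3 \cdot \|_{L^2}$ is equivalent to the natural quotient $w^{3,2}$-norm on $\mathcal{H}/\mathcal{N}$. Since $\supp D^3 \tilde V \subset Y \times [-1, 0]$, the right-hand side descends to a bounded linear functional on $\mathcal{H}/\mathcal{N}$, and Lax--Milgram produces a unique $[W]$ satisfying the variational equation; hence $V = \tilde V + W$ is unique modulo $\mathcal{N}$. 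The uniqueness up to $a y_N^2$ stated in the lemma then follows by choosing the representative whose linear $y_N$-mode is fixed (for instance by a normalisation consistent with the strong boundary conditions derived below).

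Third, to derive the strong form, I would test \eqref{eq: variational V weak} against $\psi \in C^\infty_c(Y \times (-\infty, 0))$ extended $Y$-periodically and integrate by parts, obtaining $\Delta^3 V = 0$ in $\mathcal{D}'(Y \times (-\infty, 0))$. Standard elliptic regularity for $\Delta^3$ on the cylinder $Y \times (d,0)$, with $Y$-periodicity as a closed side condition and boundary smoothness of the Dirichlet datum $b \in C^4(W)$ at $y_N = 0$, yields $V \in W^{6,2}_{\mathrm{Per}_Y}(Y \times (d, 0))$ for every $d < 0$. Plugging back into \eqref{eq: variational V weak} a general $\psi$ with $\psi(\bar y, 0) = 0$ but arbitrary $\partial_{y_N}\psi(\bar y, 0)$ and $\partial_{y_N}^2 \psi(\bar y, 0)$, applying the Triharmonic Green Formula (Theorem \ref{thm: trih green}) on $Y \times (d, 0)$, and using that the lateral terms cancel by $Y$-periodicity, specialises the natural weak intermediate boundary conditions \eqref{triharmonic weak} to the flat interface $Y \times \{0\}$ and produces the two stated conditions $\partial_{y_N}^2(\Delta V) + 2 \partial_{y_N}^2(\Delta_{N-1} V) = 0$ and $\partial_{y_N}^3 V = 0$ on $Y$.

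The main obstacle is the coercivity step: on the unbounded cylinder $Y \times (-\infty, 0)$ there is no global Poincaré inequality for $w^{3,2}_{\mathrm{Per}_Y}$, so one must carefully patch local Poincaré--Wirtinger estimates on unit strips while tracking both the trace condition at $y_N = 0$ and the admissible polynomial growth of $W$ as $y_N \to -\infty$. The compact support of $\chi$ in $y_N$ is precisely what ensures that the right-hand side of the variational equation descends to a bounded functional on the quotient $\mathcal{H}/\mathcal{N}$, which is crucial for closing the Lax--Milgram argument.
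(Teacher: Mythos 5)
Your construction is essentially the route the paper takes: the paper simply defers existence, uniqueness and regularity of $V$ to \cite[Lemma 8.60]{ArrLamb} (whose content is precisely your lifting-plus-Lax--Milgram argument on the quotient modulo the kernel of $D^3$), and then identifies the natural boundary conditions exactly as you do, by applying the Triharmonic Green Formula \eqref{eq: triharmonic green 2} on $Y\times(d,0)$ to periodic test functions with bounded support in the $y_N$-direction, using flatness of $Y\times\{0\}$ and cancellation of the lateral terms by periodicity. So in substance the proposal is correct and matches the paper, with the variational details spelled out rather than cited.

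One step does not close as you state it: the reduction of the ambiguity from $\mathcal N=\mathrm{span}\{y_N,\,y_N^2\}$ to the single mode $a\,y_N^2$. Both $y_N$ and $y_N^2$ lie in $w^{3,2}_{{\rm Per}_Y}(Y\times(-\infty,0))$, vanish on $Y\times\{0\}$, are annihilated by $D^3$, and satisfy the derived natural conditions $\partial^2_{y_N}(\Delta V)+2\partial^2_{y_N}(\Delta_{N-1}V)=0$ and $\partial^3_{y_N}V=0$ trivially; hence no ``normalisation consistent with the strong boundary conditions'' can remove the linear mode. As the variational problem \eqref{eq: variational V weak} is posed, the solution is determined only modulo $c_1y_N+c_2y_N^2$, and your Lax--Milgram argument proves exactly that (which is all that is needed later, since $K_1=\int|D^3V|^2$ is insensitive to additions from $\mathcal N$). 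The sharper ``up to $a\,y_N^2$'' phrasing of the lemma reflects a normalisation inherited from the construction in the cited reference (compare the projector $\p$ used to define $\hat v$ in Lemma \ref{lemma: unfolding convergence poly}), not a consequence of the boundary conditions; either state the two-parameter ambiguity honestly or impose an explicit normalisation of the linear $y_N$-mode at the level of the function space.
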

\begin{proof}
Existence, uniqueness and regularity of $V$ follows as in \cite[Lemma 8.60]{ArrLamb}. 
Note that in order to find the boundary conditions satisfied by $V$ on $Y$ we need to use the Triharmonic Green Formula \eqref{eq: triharmonic green 2} with $V$ in place of $f$ and $\psi$ in place of $\varphi$. We choose test functions $\psi$ as in the statement with bounded support in the $y_N$-direction. We then deduce that
\[
\begin{split}
\int_{Y \times (-\infty,0)} D^3 V : D^3 \psi \,\diff{y} &= - \int_{Y \times (-\infty,0)} \Delta^3 V  \psi \, \diff{y} + \int_{Y}
\frac{\partial^3 V}{\partial y_N^3} \frac{\partial^2\psi}{\partial y_N^2}\, \diff{\bar{y}}\\
&- \int_{Y} \Bigg(\frac{\partial^2 (\Delta V)}{\partial y_N^2} + 2 \Delta_{N-1}\bigg(\frac{\partial^2 V}{\partial y_N^2} \bigg)\Bigg)
\frac{\partial \psi}{\partial y_N}\, \diff{\bar{y}},
\end{split}
\]
hence $V$ is triharmonic and satisfies the boundary conditions in the statement.
\end{proof}

\begin{theorem}[Characterisation of the strange term]
\label{thm: chara strange term weak}
Let $V$ be the function defined in Lemma~\ref{lemma: V weak}. Let $v,\hat{v}$ be as in Theorem~\ref{thm: macroscopic limit weak}. Then
\[
\hat{v}(\bar{x},y) = - V(y) \frac{\partial v}{\partial x_N}(\bar{x},0) + a(\bar{x})y_N^2.
\]
for some function $a \in L^2(W)$. Moreover we have the following equalities:
\begin{multline}
\label{eq: strange term V}
\int_{Y \times (-\infty,0)} |D^3V|^2 dy = \int_{Y \times (-\infty,0)} D^3 V : D^3(b(\bar{y})(1+y_N^4)) dy\\
= \int_Y \Bigg(\frac{\partial(\Delta^2 V)}{\partial y_N} + \Delta_{N-1}\bigg(\frac{\partial \Delta V}{\partial y_N}\bigg) +
\Delta^2_{N-1}\bigg(\frac{\partial V}{\partial y_N} \bigg) \Bigg) b(\bar{y}) d \bar{y}.
\end{multline}
\end{theorem}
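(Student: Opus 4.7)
The statement splits naturally into two parts: the structural identification of $\hat{v}$ and the derivation of the two alternative expressions for $K_1$ in \eqref{eq: strange term V}.

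For the identification, the key idea is to set
\[
Z(\bar{x}, y) := \hat{v}(\bar{x}, y) + V(y)\, \frac{\partial v}{\partial x_N}(\bar{x}, 0),
\]
and show that $Z(\bar{x}, \cdot)$ is a polynomial in $y_N$ of degree at most two for a.e.\ $\bar{x} \in W$. By linearity together with the two variational identities for $\hat{v}$ (Theorem \ref{thm: conditions on v weak}) and $V$ (Lemma \ref{lemma: V weak}), the function $Z(\bar{x}, \cdot)$ satisfies
\[
\int_{Y \times (-\infty, 0)} D^3_y Z(\bar{x}, \cdot) : D^3_y \psi \, dy = 0
\]
for every $\psi \in w^{3,2}_{\textup{Per}_Y}(Y \times (-\infty, 0))$ with $\psi(\bar{y}, 0) = 0$. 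Using the trace identity \eqref{eq: Casado new} and the boundary condition $V(\bar{y}, 0) = b(\bar{y})$, one checks that
\[
\frac{\partial^2 Z}{\partial y_i \partial y_j}(\bar{x}, \bar{y}, 0) = 0, \qquad i, j = 1, \ldots, N-1,
\]
so $Z(\bar{x}, \bar{y}, 0)$ is affine in $\bar{y}$, and $Y$-periodicity forces it to depend only on $\bar{x}$. After subtracting this $\bar{y}$-independent constant, the resulting function satisfies the microscopic variational problem with zero trace at $y_N = 0$, and the uniqueness part of Lemma \ref{lemma: V weak} reduces it to a polynomial of degree at most two in $y_N$. Reabsorbing the lower-order monomials into the admissible ambiguity of $V$ yields the desired representation $Z(\bar{x}, y) = a(\bar{x}) y_N^2$.

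For the two equalities in \eqref{eq: strange term V}, the starting observation is that $\phi(y) := b(\bar{y})(1 + y_N)^4$ agrees with $V$ in trace at $y_N = 0$, so $\phi - V$ has zero trace there. Provided an approximation argument allows $\phi - V$ to be used as a test function in the variational equation of $V$, this immediately gives
\[
\int_{Y \times (-\infty, 0)} D^3 V : D^3 \phi \, dy = \int_{Y \times (-\infty, 0)} |D^3 V|^2 \, dy,
\]
which is the first equality. The second equality is obtained by applying the Triharmonic Green Formula \eqref{eq: triharmonic green 2} to $\int D^3 V : D^3 \phi$: the bulk term $\int (\Delta^3 V)\phi\,dy$ vanishes since $\Delta^3 V = 0$, the lateral boundary contributions cancel by $Y$-periodicity, and the boundary conditions $\partial^3 V / \partial y_N^3 = 0$ and $\partial^2_{y_N}(\Delta V) + 2 \partial^2_{y_N}(\Delta_{N-1} V) = 0$ from Lemma \ref{lemma: V weak} eliminate the contributions involving derivatives of $\phi$ other than its trace $b(\bar{y})$ at $y_N = 0$. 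What is left is precisely the boundary integral on the right-hand side of \eqref{eq: strange term V}.

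The main technical obstacle is the unboundedness of the strip $Y \times (-\infty, 0)$. The function $\phi = b(\bar{y})(1 + y_N)^4$ does not belong to $w^{3,2}_{\textup{Per}_Y}$, because $\partial^3_{y_N^3} \phi = 24\, b(\bar{y})(1+y_N)$ is not in $L^2$ at $y_N \to -\infty$. A truncation argument is therefore required: introduce a smooth cut-off $\chi_R(y_N)$ equal to $1$ near $y_N = 0$ and supported in $[-R, 0]$, replace $\phi$ by $\phi_R := \chi_R \phi$, and pass to the limit as $R \to \infty$ in both the variational equation and the Triharmonic Green Formula. Convergence relies on $D^3 V \in L^2(Y \times (-\infty, 0))$ and on Saint-Venant-type decay estimates for triharmonic functions on half-strips with periodic lateral data; the same decay is what makes the boundary contribution at $y_N \to -\infty$ in the Triharmonic Green Formula vanish in the limit.
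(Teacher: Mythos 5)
Your strategy for the identities in \eqref{eq: strange term V} is the same as the paper's (take $\psi = V-\phi$ as a test function in \eqref{eq: variational V weak}, then apply the Triharmonic Green Formula \eqref{eq: triharmonic green 2} using $\Delta^3V=0$, periodicity in $\bar y$, and the two boundary conditions of $V$ from Lemma \ref{lemma: V weak}), but your treatment of the unbounded strip introduces an avoidable gap. You replace the comparison function by $\chi_R\,b(\bar y)(1+y_N)^4$ and invoke ``Saint--Venant-type decay estimates'' to pass to the limit $R\to\infty$ and to kill the contribution at $y_N\to-\infty$; these estimates are indeed what your argument needs (polynomial growth of $D^3\phi$ against mere $L^2$-integrability of $D^3V$ is not enough), but they are asserted, not proved, and nothing in the paper provides them. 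The paper sidesteps the issue entirely: it defines $\phi(y)=b(\bar y)(1+y_N)^4$ for $-1\le y_N\le 0$ and $\phi\equiv 0$ for $y_N<-1$. Because $(1+y_N)^4$ vanishes together with its derivatives up to order three at $y_N=-1$, this extension by zero lies in $H^3$ and has support in $\{-1\le y_N\le 0\}$, so $V-\phi$ is directly admissible, the Green formula is applied on the bounded strip $Y\times(-1,0)$, the face $y_N=-1$ contributes nothing, and no behaviour at $y_N\to-\infty$ ever enters. This is also how the integral in \eqref{eq: strange term V} should be read, consistently with Theorem \ref{thm: macroscopic limit weak}, where the corresponding term appears over $Y\times(-1,0)$. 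So: either supply the decay estimates you rely on, or simply use the truncated $\phi$, which makes the extra machinery unnecessary.

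Concerning the representation $\hat v=-V\,\partial_{x_N}v(\cdot,0)+a(\bar x)y_N^2$: the paper's proof does not actually argue this part (it is inherited from the scheme of the cited references), and your $Z$-argument, combining the variational identities for $\hat v$ and $V$ with the trace relation \eqref{eq: Casado new} and $V(\bar y,0)=b(\bar y)$, is the right idea: it gives $D^3_yZ(\bar x,\cdot)=0$, hence $Z(\bar x,y)=c_0(\bar x)+c_1(\bar x)y_N+c_2(\bar x)y_N^2$ by $Y$-periodicity. However, your final step, ``reabsorbing the lower-order monomials into the admissible ambiguity of $V$,'' does not work as stated: Lemma \ref{lemma: V weak} allows modifying $V$ only by a multiple of $y_N^2$, so the constant and the linear term cannot be absorbed there. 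They must instead be dealt with through the normalisation of $\hat v$ inherited from the construction in Lemma \ref{lemma: unfolding convergence poly} (the subtraction of the projection $\mathcal{P}$ fixes the averages over $Y$ of the traces of $\hat v$ and of its derivatives up to order two at $y_N=0$), or one observes that they are immaterial for the sequel, since only $D^3_y\hat v$ enters the macroscopic limit and hence the strange term $K_1$.
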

\begin{proof}
Let $\phi$ be the real-valued function defined on $Y \times ]-\infty,0]$ by
\[
\phi(y) =
\begin{cases}
b(\bar{y}) (1+y_N)^4, &\textup{if $-1\leq y_N \leq 0$},\\
0, &\textup{if $ y_N < -1$}.
\end{cases}
\]
Then $\phi \in H^3(Y \times (-\infty,0))$, and $\phi(\bar{y},0) = 0$ for all $\bar{y} \in Y$. Now note that the function $\psi = V - \phi$
is a suitable test-function in equation \eqref{eq: variational V weak}; by plugging it in we get
\[
\int_{Y \times (-\infty,0)} |D^3V|^2\, dy = \int_{Y \times (-\infty,0)} D^3V : D^3(b(\bar{y})(1+y_N)^4)\, dy
\]
By applying \eqref{eq: triharmonic green 2} on the right-hand side of the former equation, and by keeping in
account that $V$ is as in Lemma \ref{lemma: V weak}, so $\Delta^3V = 0$ in $Y \times (d,0)$ for all $d
< 0$, we deduce that
\begin{multline*}
\int_{Y \times (-\infty,0)} D^3V : D^3(b(\bar{y})(1+y_N)^4)\, dy =\\
\int_Y \Bigg(\frac{\partial(\Delta^2 V)}{\partial y_N} + \Delta_{N-1}\bigg(\frac{\partial (\Delta V)}{\partial y_N}\bigg) +
\Delta^2_{N-1}\bigg(\frac{\partial V}{\partial y_N} \bigg) \Bigg) b(\bar{y})\, d\bar{y}.
\end{multline*}
\end{proof}

By Lemma \ref{lemma: V weak} and Theorem \ref{thm: chara strange term weak} it is now easy to deduce (iii) of Theorem \ref{thm: spectral conv
tri weak}.

\begin{proof}[Proof of Theorem \ref{thm: spectral conv tri weak}(iii)]
Note that the function $v$ of Theorem \ref{thm: macroscopic limit weak} satisfies
\begin{multline}
\label{tri weak final eq}
- \int_W \int_{Y \times (-1,0)} (D^3_y(\hat{v}) : D^3 (b(\bar{y})(1+y_N)^4) \,\diff{y}\, \frac{\partial\varphi}{\partial x_N}(\bar{x}, 0)
\diff{\bar{x}}\\
+\int_{\Omega} D^3 v : D^3\varphi + u\varphi \,\diff{x}= \int_{\Omega} f \varphi\, \diff{x},
\end{multline}
for all $\varphi \in H^3(\Omega) \cap H^1_0(\Omega)$. By Theorem \ref{thm: chara strange term weak} the first integral in the left-hand side
of \eqref{tri weak final eq} can be equivalently rewritten as
\[
\int_W \bigg(\int_{Y \times (-\infty,0)} |D^3V|^2 \,\diff{y}\Bigg)\, \frac{\partial v}{\partial x_N}(\bar{x},
0)\frac{\partial\varphi}{\partial x_N}(\bar{x}, 0) \diff{\bar{x}},
\]
where $V$ is defined in Lemma \ref{lemma: V weak}. By \eqref{eq: triharmonic green 2}
{\small
\begin{equation}
\label{proof: tri weak trih green}
\begin{split}
&\int_{\Omega} D^3 v : D^3\varphi \, dx = - \int_{\Omega} \Delta^3 v\, \varphi\, dx + \int_{\partial \Omega} \frac{\partial^3 f}{\partial n^3}
\frac{\partial^2\varphi}{\partial n^2}\, dS\\
&+ \int_{\partial \Omega} \Bigg(  \big( (n^T D^3v)_{\partial \Omega} : D_{\partial\Omega}n \big) - \frac{\partial^2 (\Delta v)}{\partial n^2} - 2 \Div_{\partial \Omega}(D^3v[n\otimes n])_{\partial \Omega} \Bigg) \frac{\partial \varphi}{\partial n}\, dS,
\end{split}
\end{equation}
}for all $\varphi \in H^3(\Omega) \cap H^1_0(\Omega)$. In particular, we deduce that on $W \times \{0\}$ we have the following boundary
integral
{\small
\begin{equation}
\label{proof: tri weak last B I}
\int_W \Bigg(- \frac{\partial^2 (\Delta v)}{\partial x_N^2}(\bar{x}, 0) - 2 \Delta_{N-1}\bigg(\frac{\partial^2 v}{\partial x_N^2}
\bigg)(\bar{x}, 0) +  K_1 \frac{\partial v}{\partial x_N}(\bar{x}, 0)\Bigg)
\frac{\partial\varphi}{\partial x_N}(\bar{x}, 0)\diff{\bar{x}},
\end{equation}
} where $K_1 = \int_{Y \times (-\infty,0)} |D^3V|^2$. Then, by \eqref{tri weak final eq}, \eqref{proof: tri weak trih green}, \eqref{proof: tri weak last B I} and the arbitrariness of $\varphi$ we deduce the statement of Theorem \ref{thm: spectral conv tri weak}, part (iii).
\end{proof}

\section{Appendix (A)}
\label{sec: poly green formula}

We give here a proof of the Triharmonic Green Formula. We refer to Section \ref{sec: auxil} for the tangential calculus notation and related
results. We first note that by using tangential calculus it is possible to prove that
\begin{multline}
\label{eq: decomposition hessian}
D^2f(x) = \bigg(D^2_{\partial \Omega}f(x) +  \frac{\partial}{\partial n}\bigg(\nabla_{\partial \Omega}f(x) \bigg) \otimes n(x) + n(x) \otimes
\nabla_{\partial \Omega}\bigg(\frac{\partial f(x)}{\partial n} \bigg)\\
+ \frac{\partial^2 f(x)}{\partial n^2} n(x) \otimes n(x) \bigg) + \frac{\partial f(x)}{\partial n} D_{\partial \Omega} n(x),
\end{multline}
for all $x \in \partial \Omega$.
Then formula \eqref{eq: decomposition hessian} can be equivalently rewritten as
\begin{multline}
\label{eq: decomposition hessian 2}
D^2f(x) = \bigg(D^2_{\partial \Omega}f(x) +  \nabla_{\partial \Omega}\bigg(\frac{\partial f(x)}{\partial n} \bigg)\otimes n(x) + n(x) \otimes
\nabla_{\partial \Omega}\bigg(\frac{\partial f(x)}{\partial n} \bigg)\\
+ \frac{\partial^2 f(x)}{\partial n^2} n(x) \otimes n(x) \bigg) - (D_{\partial \Omega}n(x))(\nabla_{\partial \Omega}f(x)) \otimes n(x) +
\frac{\partial f(x)}{\partial n} D_{\partial \Omega} n(x),
\end{multline}
for all $x \in \partial \Omega$. Finally, note that if we take the trace on both hand sides of \eqref{eq: decomposition hessian 2} we recover
the classical decomposition formula for the Laplacian at the boundary
\[
\Delta f(x) = \Delta_{\partial \Omega} f(x) + \frac{\partial^2 f(x)}{\partial n^2} + \mathcal{H}(x) \frac{\partial f(x)}{\partial n},
\]
for all $x \in \partial \Omega$, where $\mathcal{H}$ is the curvature of $\partial \Omega$.

\begin{theorem}[Triharmonic Green Formula - general domain]
\label{thm: trih green}
Let $\Omega$ be a bounded domain of $\R^N$ of class $C^{0,1}$. Let $f \in C^6(\overline{\Omega})$, $\varphi \in C^3(\overline{\Omega})$. Then
\begin{multline}
\label{eq: triharmonic green 1}
\int_{\Omega} D^3f : D^3\varphi\, dx  = - \int_{\Omega} \Delta^3 f\, \varphi\, dx + \int_{\partial \Omega} (n^T D^3f) : D^2\varphi\, dS\\
- \int_{\partial \Omega} (n^T D^2(\Delta f))_{\partial \Omega} \cdot \nabla_{\partial \Omega} \varphi\, dS - \int_{\partial \Omega}
\frac{\partial^2(\Delta f)}{\partial n^2} \frac{\partial \varphi}{\partial n}\, dS  + \int_{\partial \Omega} \frac{\partial (\Delta^2
f)}{\partial n} \varphi\, dS.
\end{multline}
If moreover $\Omega$ is of class $C^3$ then
\begin{equation}
\label{eq: triharmonic green 2}
\begin{split}
&\int_{\Omega}D^3f : D^3\varphi\, dx= - \int_{\Omega} \Delta^3 f\, \varphi\, dx + \int_{\partial \Omega} \frac{\partial^3 f}{\partial n^3}
\frac{\partial^2\varphi}{\partial n^2}\, dS\\
&+ \int_{\partial \Omega} \Bigg(  \big( (n^T D^3f)_{\partial \Omega} : D_{\partial\Omega}n \big) - \frac{\partial^2 (\Delta f)}{\partial n^2}
- 2 \Div_{\partial \Omega}(D^3f[n\otimes n])_{\partial \Omega} \Bigg) \frac{\partial \varphi}{\partial n}\, dS\\
&+ \int_{\partial \Omega} \Bigg( \Div^2_{\partial \Omega}\!\big((n^T D^3f)_{\partial \Omega}\big) +  \Div_{\partial \Omega}\big(D_{\partial
\Omega}n(D^3f[n\otimes n])_{\partial \Omega} \big)\\
&\quad\quad\quad\quad\quad\quad\quad\quad\quad\quad\quad\quad+ \frac{\partial (\Delta^2 f)}{\partial n} + \Div_{\partial
\Omega}\big(n^T D^2(\Delta f)\big)_{\partial \Omega}\Bigg) \varphi\, dS.
\end{split}
\end{equation}
\end{theorem}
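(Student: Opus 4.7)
The plan is to prove \eqref{eq: triharmonic green 1} first by iterated divergence-theorem arguments valid on any Lipschitz domain, and then derive \eqref{eq: triharmonic green 2} as a refinement under $C^3$ regularity by decomposing the boundary integrals via tangential calculus.

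For \eqref{eq: triharmonic green 1}, I would expand the Frobenius inner product as
\[
D^3 f : D^3\varphi=\sum_{i,j,k=1}^N \frac{\partial^3 f}{\partial x_i \partial x_j \partial x_k}\frac{\partial^3 \varphi}{\partial x_i \partial x_j \partial x_k},
\]
and apply the classical divergence theorem three times, each time peeling off one derivative from $\varphi$ and picking up a boundary contribution. The integration by parts in $x_k$ summed over $k$ converts two $\varphi$-derivatives to a Laplacian on $f$ and produces the boundary term $(n^T D^3 f):D^2\varphi$; the one in $x_j$ produces the interior $\nabla(\Delta^2 f)\cdot \nabla\varphi$ and the boundary $-(n^T D^2(\Delta f))\cdot\nabla\varphi$; the final one in $x_i$ produces $-\Delta^3 f\,\varphi$ in the interior and $\frac{\partial(\Delta^2 f)}{\partial n}\varphi$ on the boundary. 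Finally I would split $\nabla\varphi=\nabla_{\partial \Omega}\varphi+\frac{\partial\varphi}{\partial n}n$ and use $(n^T D^2(\Delta f))\cdot n=\frac{\partial^2(\Delta f)}{\partial n^2}$ to isolate the tangential and normal parts, yielding \eqref{eq: triharmonic green 1}. This argument only needs $C^{0,1}$ regularity of $\Omega$.

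For \eqref{eq: triharmonic green 2}, assume $\Omega \in C^3$ so that $n\in C^2(\partial \Omega)$ and all the tangential operators of Section~\ref{sec: auxil} are available. Starting from \eqref{eq: triharmonic green 1}, I would apply the Hessian decomposition \eqref{eq: decomposition hessian 2} to $D^2\varphi$ on $\partial \Omega$, expressing it in terms of $D^2_{\partial\Omega}\varphi$, $\nabla_{\partial \Omega}(\frac{\partial\varphi}{\partial n})$, $\frac{\partial^2\varphi}{\partial n^2}$, $\frac{\partial\varphi}{\partial n}$ and the curvature tensor $D_{\partial\Omega}n$. Because $D^3 f$ is symmetric in all three indices, the 2-tensor $n^T D^3 f$ is symmetric and satisfies $(n^T D^3 f):(u\otimes n)=(D^3 f[n\otimes n])\cdot u$ and $(n^T D^3 f):(n\otimes n)=\frac{\partial^3 f}{\partial n^3}$. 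Substituting the decomposition into $\int_{\partial \Omega}(n^T D^3 f):D^2\varphi\, dS$ generates contributions that multiply $D^2_{\partial\Omega}\varphi$, $\nabla_{\partial \Omega}(\frac{\partial\varphi}{\partial n})$, $\frac{\partial^2\varphi}{\partial n^2}$, and $\frac{\partial\varphi}{\partial n}$. To move tangential derivatives off $\varphi$, I would apply the Tangential Green Formula \eqref{Tangential Green's formula} twice on the first group and once on the second, and similarly apply it once to the boundary term $(n^T D^2(\Delta f))_{\partial \Omega}\cdot\nabla_{\partial \Omega}\varphi$ already present in \eqref{eq: triharmonic green 1}. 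Grouping the resulting contributions by which canonical trace $\varphi$, $\frac{\partial\varphi}{\partial n}$ or $\frac{\partial^2 \varphi}{\partial n^2}$ they multiply then produces the four boundary integrals in \eqref{eq: triharmonic green 2}.

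The main obstacle will be the bookkeeping of curvature contributions. The decomposition \eqref{eq: decomposition hessian 2} contains two ``extra'' terms involving $D_{\partial \Omega}n$, and each application of \eqref{Tangential Green's formula} produces an additional $\mathcal{H}\,(v\cdot n)$ factor; these have to recombine into the two intrinsic pieces $\bigl((n^T D^3 f)_{\partial \Omega}:D_{\partial \Omega}n\bigr)$ and $\operatorname{Div}_{\partial \Omega}\!\bigl(D_{\partial \Omega}n\,(D^3 f[n\otimes n])_{\partial \Omega}\bigr)$ appearing in the statement. The calculation is tedious but essentially mechanical once one exploits the symmetry of $n^T D^3 f$ and the identity $\operatorname{Div}_{\partial \Omega}n=\mathcal{H}$; no hidden analytical difficulty is expected, and the formula for $C^6$ functions then extends to $f\in H^6(\Omega)$, $\varphi\in H^3(\Omega)$ by density.
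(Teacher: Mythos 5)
Your proposal is correct and follows essentially the same route as the paper: three successive integrations by parts plus the tangential/normal splitting of $\nabla\varphi$ give \eqref{eq: triharmonic green 1}, and then the Hessian decomposition \eqref{eq: decomposition hessian 2} applied to $D^2\varphi$, combined with repeated use of the tangential Green formula \eqref{Tangential Green's formula} (the paper packages the double application as the identity \eqref{second order TGF}), yields \eqref{eq: triharmonic green 2}. The only remark is that the curvature bookkeeping is lighter than you anticipate: the $\mathcal{H}\,(v\cdot n)$ terms produced by \eqref{Tangential Green's formula} vanish identically because the vector fields involved are tangential, so the $D_{\partial\Omega}n$ contributions come solely from the decomposition \eqref{eq: decomposition hessian 2}.
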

\begin{proof}
Repeated integrations by parts establish that
{\small \begin{equation}
\label{proof: trih green formula 1}
\begin{split}
&\int_{\Omega} D^3f : D^3 \varphi\, dx= \int_{\Omega} \frac{\partial^3 f}{\partial x_i \partial x_j \partial x_k} \frac{\partial^3
\varphi}{\partial x_i \partial x_j \partial x_k} \, dx\\
&= - \int_{\Omega} \Delta^3 f \varphi\, dx  + \int_{\partial \Omega} (n^T D^3f) : D^2\varphi\, dS - \int_{\partial \Omega} (n^T D^2(\Delta f))
\cdot \nabla \varphi\, dS + \int_{\partial \Omega} \frac{\partial (\Delta^2 f)}{\partial n} \varphi\, dS,
\end{split}
\end{equation}
}where summation symbols on $i,j,k$ from $1$ to $N$ have been dropped. Then \eqref{eq: triharmonic green 1} follows from \eqref{proof: trih
green formula 1} by decomposing the gradient appearing in the third integral on the right-hand side of \eqref{proof: trih green formula 1} in tangential and normal components, see Definition \ref{tangential gradient}.\\
In order to prove \eqref{eq: triharmonic green 2} we need first to decompose the hessian matrix appearing in the first boundary integral on
the right-hand side of \eqref{eq: triharmonic green 1}. By using formula \eqref{eq: decomposition hessian 2} on $D^2\varphi$ we deduce that
\begin{equation}
\label{proof: decomposition 1}
\begin{split}
\int_{\partial \Omega}(n^T D^3f) : D^2\varphi \,dS &=  \int_{\partial \Omega} (n^T D^3f)_{\partial \Omega} : D^2_{\partial \Omega}\varphi\, dS\\
&+ 2 \int_{\partial \Omega} (D^3f[n\otimes n])_{\partial \Omega} \cdot \nabla_{\partial \Omega}\bigg(\frac{\partial \varphi}{\partial n}
\bigg)\, dS\\
&-\int_{\partial \Omega} \Big(D_{\partial \Omega}n(D^3f[n\otimes n])_{\partial \Omega}\Big) \cdot \nabla_{\partial \Omega} \varphi \, dS\\
&+ \int_{\partial \Omega}\big((n^T D^3f)_{\partial \Omega} : D_{\partial \Omega} n \big) \frac{\partial \varphi}{\partial n}\, dS
+\int_{\partial \Omega} \frac{\partial^3 f}{\partial n^3} \frac{\partial^2 \varphi}{\partial n^2}\, dS.
\end{split}
\end{equation}
In \eqref{proof: decomposition 1} the symbol $D^3f[n\otimes n]$ stands for the vector having as $i$-th component $\frac{\partial^3 f}{\partial x_i \partial x_j \partial x_k} n_j n_k$, where sums over $j$ and $k$ are understood. Note also that the third integral on the right-hand side of \eqref{proof: decomposition 1} is deduced from
\[
-\int_{\partial \Omega} (n^T D^3f) : \big(D_{\partial \Omega}n (\nabla_{\partial \Omega} \varphi) \otimes n \big)\, dS,
\]
by using the following equalities
\[
\begin{split}
(n^T D^3f) : \big(D_{\partial \Omega}n (\nabla_{\partial \Omega} \varphi) \otimes n \big) &=  \big(D_{\partial \Omega}n (\nabla_{\partial
\Omega} \varphi)\big)^T (n^T D^3f) n \\
&= (\nabla_{\partial \Omega} \varphi)^T \big((D_{\partial \Omega}n)^T(D^3f[n \otimes n])_{\partial \Omega}\big)\\
&= \big((D_{\partial \Omega}n)(D^3f[n \otimes n])_{\partial \Omega}\big) \cdot \nabla_{\partial \Omega} \varphi.
\end{split}
\]
In the third equality we have used the fact that $D_{\partial \Omega}n$ is a symmetric matrix. Now, since $\Omega$ is of class $C^2$, we plan to apply the Tangential Divergence theorem (see Theorem \ref{thm: tangential div thm}) to the first, the second, and the third integral in the right-hand side of \eqref{proof: decomposition 1}. We consider separately the first integral. Let us note that for every matrix $A =
(a_{ij}(x))_{ij}$ with coefficients $a_{ij} \in C^2(\overline{\Omega})$ and for every function $\psi \in C^2(\overline{\Omega})$, we have
\[
\int_{\partial \Omega} \Div_{\partial \Omega}\!\big((A)_{\partial \Omega}(\nabla_{\partial \Omega} \psi)\big) \, dS = 0
\]
by \eqref{Tangential Divergence Thm}. Here $((A)_{\partial \Omega} )_{ij}= (a_{ij}\circ p)|_{\partial \Omega}$, where $p$ is defined in
Section \ref{sec: auxil}. Hence,
\begin{equation}
\label{proof: second order TGF}
\int_{\partial \Omega} (\Div_{\partial \Omega}(A)_{\partial \Omega}) \cdot \nabla_{\partial \Omega} \psi + (A)_{\partial \Omega} :
D^2_{\partial \Omega} \psi \, dS = 0.
\end{equation}
Finally, a further application of the Tangential Green formula (see \eqref{Tangential Green's formula}) on the first summand on the right-hand side of \eqref{proof: second order TGF} yields
\begin{equation}
\label{second order TGF}
\int_{\partial \Omega} (\Div^2_{\partial \Omega}(A)_{\partial \Omega}) \psi \, dS =  \int_{\partial \Omega} (A)_{\partial \Omega} :
D^2_{\partial \Omega} \psi \, dS
\end{equation}
for all matrix $A \in C^2(\overline{\Omega})^{N\times N}$, for every function $\psi \in C^2(\overline{\Omega})$. Then, by applying Formula
\eqref{second order TGF} to the first integral in the right-hand side of \eqref{proof: decomposition 1} with $A = (n^TD^3f)$ and $\psi = f$,
and by using \eqref{Tangential Green's formula} on the second and third integral in the right-hand side of \eqref{proof: decomposition 1} we
deduce that
\begin{equation}
\label{proof: boundary terms}
\begin{split}
&\int_{\partial \Omega} (n^T D^3f) : D^2\varphi\, dS=  \int_{\partial \Omega} \Div^2_{\partial \Omega}\big((n^T D^3f)_{\partial \Omega}\big)
\varphi\, dS \\
&- 2 \int_{\partial \Omega} \Div_{\partial \Omega}\big((D^3f[n\otimes n])_{\partial \Omega}\big) \frac{\partial \varphi}{\partial
n}\, dS +\int_{\partial \Omega} \Div_{\partial \Omega}\Big(D_{\partial \Omega}n(D^3f[n\otimes n])_{\partial \Omega}\Big) \varphi \, dS \\
&+ \int_{\partial \Omega}\big((n^TD^3f)_{\partial \Omega} : D_{\partial \Omega} n \big) \frac{\partial \varphi}{\partial n}\, dS +\int_{\partial \Omega} \frac{\partial^3 f}{\partial n^3} \frac{\partial^2 \varphi}{\partial n^2}\, dS,
\end{split}
\end{equation}
where we have denoted with $(V)_{\partial \Omega}$ the projection
of $V$ on the tangent plane to $\partial \Omega$, as defined in \S \ref{sec: auxil}.
By applying the Tangential Divergence Theorem to the second boundary integral on the right-hand side of \eqref{eq: triharmonic green 1} we
finally deduce that
\begin{equation}
\label{proof: decomposition gradient}
- \int_{\partial \Omega} (n^T D^2(\Delta f))_{\partial \Omega} \cdot \nabla_{\partial \Omega} \varphi\, dS = \int_{\partial \Omega}
\Div_{\partial \Omega}\!\big(n^T D^2(\Delta f)\big)_{\partial \Omega}\, \varphi\, dS.
\end{equation}
By \eqref{proof: boundary terms} and \eqref{proof: decomposition gradient} we get \eqref{eq: triharmonic green 2}, concluding the proof.
\end{proof}
%

\section{Appendix (B)}
\label{sec: appendix (B)}

\begin{proposition}
\label{prop: degeneration normal derivative}
Let $u \in H^3(\Omega) \cap H^1_0(\Omega)$ be the function defined in the statement of Lemma \eqref{lemma: degeneration H^3 H^1_0}. If $1<
\alpha < 2$ then $\frac{\partial u}{\partial x_N} (\bar{x},0) = 0$ for almost all $\bar{x} \in W$.
\end{proposition}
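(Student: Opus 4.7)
The plan is to re-use the anisotropic unfolding set-up from the proof of Lemma~\ref{lemma: degeneration H^3 H^1_0} in the regime $\alpha \geq 3/2$, but to extract a weaker conclusion through a simpler limiting argument that remains valid throughout the range $1 < \alpha < 2$. The key observation is that the bound \eqref{ineq: estimate psi eps} forces $\norma{\hat{\Psi}_\eps(\cdot, \cdot, 0)}_{L^2(W \times Y)} \to 0$ as soon as $\alpha > 1$, not merely $\alpha > 3/2$, so it is enough to identify the dominant term of $\hat{\Psi}_\eps(\cdot, \cdot, 0)$ when $\alpha < 2$.

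First I would start from the identity \eqref{eq: boundary equality 2} evaluated at $x_N = g_\eps(\bar{x})$ and unfold it as in the main lemma, producing $\hat{\Psi}_\eps(\bar{x}, \bar{y}, y_\eps) = 0$ with $y_\eps = \eps^{\alpha-1}b(\bar{y})$. Using the exact integration formula \eqref{eq: exact int formula} and the chain rule, each of the five summands of $\hat{\Psi}_\eps$ evaluated at $y_N = 0$ can be rewritten in ``physical'' variables; denoting these summands by $S_1, \dots, S_5$, one obtains
\begin{align*}
S_1 &= (\partial^2_{x_i x_j} u_\eps)\bigl(\eps[\bar{x}/\eps] + \eps \bar{y}, 0\bigr),\\
S_2 + S_3 &= \eps^{\alpha - 1}\bigl[(\partial^2_{x_i x_N} u_\eps)\, \partial_{y_j} b + (\partial^2_{x_j x_N} u_\eps)\, \partial_{y_i} b\bigr],\\
S_4 &= \eps^{2\alpha - 2} (\partial^2_{x_N^2} u_\eps)\, \partial_{y_i}b\, \partial_{y_j}b,\\
S_5 &= \eps^{\alpha - 2} (\partial_{x_N} u_\eps)\bigl(\eps[\bar{x}/\eps] + \eps \bar{y}, 0\bigr)\, \partial^2_{y_i y_j} b(\bar{y}),
\end{align*}
where the unwritten traces are all taken at $x_N = 0$ and at the point $\eps[\bar{x}/\eps] + \eps \bar{y}$.

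The weak convergence $u_\eps|_\Omega \rightharpoonup u$ in $H^3(\Omega)$ combined with the compactness of the trace operator from $H^3(\Omega)$ into $L^2(\partial \Omega)$ yields strong $L^2(W)$-convergence of each trace $(\partial^\beta u_\eps)(\cdot, 0) \to (\partial^\beta u)(\cdot, 0)$ for every multi-index $\beta$ with $|\beta| \leq 2$; the $L^2$-continuity of the unfolding operator then gives $S_1 \to \partial^2_{x_i x_j} u(\bar{x}, 0)$ strongly in $L^2(W \times Y)$, while $S_2, S_3, S_4 \to 0$ in the same space because their prefactors $\eps^{\alpha - 1}$ and $\eps^{2\alpha - 2}$ vanish under $\alpha > 1$. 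Since $\sum_{k=1}^{5} S_k = \hat{\Psi}_\eps(\cdot, \cdot, 0) \to 0$ in $L^2(W \times Y)$, it follows that $S_5 \to -\partial^2_{x_i x_j} u(\bar{x}, 0)$ in $L^2(W \times Y)$. Multiplying this convergence by $\eps^{2-\alpha}$ and using $\eps^{2-\alpha} \to 0$ (which is where $\alpha < 2$ enters), the right-hand side vanishes in $L^2$, whereas the left-hand side equals $(\partial_{x_N} u_\eps)(\eps[\bar{x}/\eps] + \eps \bar{y}, 0)\, \partial^2_{y_i y_j} b(\bar{y})$, which converges strongly in $L^2(W \times Y)$ to $\partial_{x_N} u(\bar{x}, 0)\, \partial^2_{y_i y_j} b(\bar{y})$. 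Consequently
\[
\partial_{x_N} u(\bar{x}, 0)\, \partial^2_{y_i y_j} b(\bar{y}) = 0 \quad \text{for a.e. } (\bar{x}, \bar{y}) \in W \times Y.
\]
Since $b$ is $Y$-periodic and non-constant, it is not affine; hence there exist indices $i, j$ for which $\partial^2_{y_i y_j} b$ is non-zero on a subset of $Y$ of positive measure, and Fubini then forces $\partial_{x_N} u(\bar{x}, 0) = 0$ for a.e. $\bar{x} \in W$.

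The main subtlety is verifying that the five summands $S_k$ admit their claimed $L^2$-limits in the strong sense \emph{without} the fluctuation/mean-subtraction correction used in Lemma~\ref{lemma: degeneration H^3 H^1_0}; this bypass is possible precisely because I do not need a uniform bound on $S_1$ but only its limit, so no Poincaré--Wirtinger step is required. The delicate point is then only the strong convergence of the unfolded boundary traces, which follows from compactness of the embedding $H^3(\Omega) \hookrightarrow H^2(\Omega)$, the continuity of the trace map $H^2(\Omega) \to H^{3/2}(\partial \Omega)$, the compactness of $H^{3/2}(\partial \Omega) \hookrightarrow L^2(\partial \Omega)$ and the standard $L^2$-continuity of the unfolding operator.
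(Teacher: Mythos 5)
Your argument is correct, but it follows a different technical route from the paper's own proof of Proposition \ref{prop: degeneration normal derivative}. The paper works with the rescaled unfolding \eqref{proof: def hat u eps tri}, in which $y_N$ is scaled by $\eps^\alpha$ so that the unfolded functions live on the fixed domain $W\times\hat Y$ bounded by the graph of $b$; it first proves the bulk convergences $D_y^\beta\hat u_\eps\to 0$ and $\hat u_\eps\to u(\bar x,0)$ in $L^2(W\times\hat Y)$ (this is where $\alpha<2$ enters, via \eqref{proof: decay gradient alfa < 1}, plus Lemma \ref{lemma: easy averages} and a Poincar\'e step), and then passes to the limit in the unfolded identity \eqref{eq: unfolding equality} evaluated on the oscillating boundary $y_N=b(\bar y)$, where the convergence of the unfolded second-derivative traces is only sketched (``by arguing as in \eqref{proof: conv hat u eps to u}''). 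You instead stay with the $\eps$-scaled unfolding of Lemma \ref{lemma: degeneration H^3 H^1_0} and use the quantitative estimate \eqref{ineq: estimate psi eps} to transfer the twice-differentiated Dirichlet condition from the oscillating boundary to the flat trace $y_N=0$, observing correctly that its derivation uses only the positivity of the exponents $\alpha$, $3\alpha-2$, $5\alpha-4$, $4\alpha-4$, i.e.\ only $\alpha>1$ (the paper's restriction ``$\alpha\geq 3/2$'' there is a convenience, not a necessity — this is the one point you should state you have re-checked, since the estimate is proved in the paper under that heading). After that, the limit passage is simpler than in the paper: all traces are taken at the flat interface $x_N=0$, where weak $H^3(\Omega)$ convergence plus compactness of the trace map and the standard $L^2$-continuity of the unfolding operator (the paper's own combination of \eqref{limit: trace convergence}, Lemma \ref{lemma: easy averages} and the fluctuation bound, as in \eqref{claim: convergence unfolding}) give strong $L^2(W\times Y)$ convergence of $S_1,\dots,S_4$, and the dominant balance at rate $\eps^{\alpha-2}$ is extracted exactly as in the paper — with the extra small gain that you never need $u(\cdot,0)=0$ or $\partial^2_{x_ix_j}u(\cdot,0)=0$, since multiplying by $\eps^{2-\alpha}$ kills any finite limit of $S_5$. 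What your route buys is the avoidance of the rescaled unfolding, of the bulk identification $\hat u_\eps\to u(\bar x,0)$, and of the trace convergences on the oscillating surface; what the paper's route buys is the explicit microscopic information ($\hat u$ constant in $y$, the weak limit $\zeta$ of the normal derivative) that is reused elsewhere in the degeneration analysis. The final step (non-constant periodic $b$ is not affine, hence some $\partial^2_{y_iy_j}b$ is nonzero on a set of positive measure, so Fubini forces $\partial_{x_N}u(\cdot,0)=0$) coincides with the paper's.
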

\begin{proof}
In this proof we use the definition of $\hat{Y}$ and $\hat{u}$ introduced in \eqref{proof: def hat Y tri} and \eqref{proof: def hat u eps
tri}.
Note that
\begin{equation}
\label{proof: decay gradient alfa < 1}
\eps^\alpha \int_W \int_{\hat{Y}} \frac{1}{\eps^2} |\nabla_{\bar{y}} \hat{u}_\eps|^2 + \frac{1}{\eps^{2\alpha}} \bigg\lvert \frac{\partial
\hat{u}_\eps}{\partial y_N} \bigg\rvert^2\, d\bar{x} dy = \int_{\Omega_\eps} |\nabla u_\eps|^2\, dx,
\end{equation}
where we have used formula \eqref{eq: exact int formula}. Since $\alpha < 2$ we deduce that $ \nabla_y \hat{u}_\eps \to 0$ in $L^2(W \times
\hat{Y})^N$. In a similar way one proves that $D_y^\beta \hat{u}_\eps \to 0$ for all the multiindexes $\beta$ such that $1\leq |\beta| \leq
3$. Now note that $\int_W \int_Y |\hat{u}_\eps(\bar{x}, \bar{y},0)|^2 d\bar{y} d\bar{x} = \int_W |u_\eps(\bar{x},0)|^2 d\bar{x} \leq C$,
uniformly in $\eps > 0$. Thus,
\[
\begin{split}
&\int_W \int_{\hat{Y}} |\hat{u}_\eps(\bar{x}, y)|^2 dy d\bar{x}\\
&\leq 2 \int_W \int_{\hat{Y}} |\hat{u}_\eps(\bar{x}, y) - \hat{u}_\eps(\bar{x},\bar{y},0)|^2 dy d\bar{x} + 2(b(\bar{y}) + 1)\int_W \int_Y
|\hat{u}_\eps(\bar{x}, \bar{y},0)|^2 d\bar{y} d\bar{x}\\
&\leq 2\int_W \int_{\hat{Y}} \int_0^{y_N} \left|\frac{\partial \hat{u}_\eps}{\partial y_N}(\bar{x},\bar{y},t) \right|^2 dt |y_N| dy d\bar{x} +
C \leq C',
\end{split}
\]
hence $\hat{u}_\eps$ is uniformly bounded in $L^2(W, H^3(\hat{Y}))$ and up to a subsequence $\hat{u}_\eps \rightharpoonup \hat{u}$ in $L^2(W, H^3(\hat{Y}))$, for some function $\hat{u} \in L^2(W, H^3(\hat{Y}))$. Actually $\hat{u}$ does not depend on $y$; indeed $\nabla_y \hat{u}_\eps \to 0$ in $L^2(W \times \hat{Y})^N$ implies that $\nabla_y \hat{u}= 0$. Since $u_\eps \to u$ weakly in $H^3(\Omega)$, by the Trace Theorem, $u_\eps(\bar{x},0) \to u(\bar{x},0)$ strongly in $L^2(W)$. By Lemma \ref{lemma: easy averages} below, we deduce that
\begin{equation}
\label{proof: converg u_eps bar}
\overline{u_\eps}(\bar{x}) = \frac{1}{\eps^{N-1}} \int_{C_\eps(\bar{x})} u_\eps(\bar{t},0) d\bar{t} \to u(\bar{x},0),
\end{equation}
strongly in $L^2(W)$ as $\eps \to 0$. By a change of variable it is easy to see that $\overline{u_\eps}(\bar{x}) = \int_Y
\hat{u}_\eps(\bar{x}, \bar{z}, 0)\, d\bar{z}$ for almost all $\bar{x} \in W$. By Poincaré inequality it is also easy to prove that
\begin{equation}
\label{proof: conv hat u eps}
\norma*{\hat{u}_\eps - \int_Y \hat{u}_\eps(\cdot, \bar{z}, 0) d\bar{z}}_{L^2(W \times \hat{Y})} \leq C \norma{\nabla_{\bar{y}}\hat{u}_\eps
}_{L^2(W \times \hat{Y})} \to 0,
\end{equation}
as $\eps \to 0$, according to \eqref{proof: decay gradient alfa < 1}. Then, by \eqref{proof: converg u_eps bar} and \eqref{proof: conv hat u
eps} we have
\begin{multline}
\label{proof: conv hat u eps to u}
\norma{\hat{u}_\eps - u(\bar{x},0)}_{L^2(W \times \hat{Y})}\\
\leq \norma*{\hat{u}_\eps - \int_Y \hat{u}_\eps(\cdot, \bar{z}, 0) d\bar{z}}_{L^2(W \times \hat{Y})} + \norma*{ \int_Y \hat{u}_\eps(\cdot,
\bar{z}, 0) d\bar{z} - u(\bar{x},0)}_{L^2(W \times \hat{Y})} \to 0,
\end{multline}
as $\eps \to 0$, which implies that $\hat{u}(\bar{x}) = u(\bar{x},0)$ for almost all $\bar{x} \in W$. Now we unfold the following identity
\[
\begin{split}
&\frac{\partial^2 u_\eps}{\partial x_i \partial x_j}(\bar{x}, g_\eps(\bar{x})) + \frac{\partial^2 u_\eps}{\partial x_i\partial x_N}(\bar{x},
g_\eps(\bar{x})) \frac{\partial g_\eps(\bar{x})}{\partial x_j} + \frac{\partial^2 u_\eps}{\partial x_j\partial x_N}(\bar{x}, g_\eps(\bar{x}))
\frac{\partial g_\eps(\bar{x})}{\partial x_i}\\
&+ \frac{\partial^2 u_\eps}{\partial x^2_N}(\bar{x}, g_\eps(\bar{x})) \frac{\partial g_\eps(\bar{x})}{\partial x_i}\frac{\partial
g_\eps(\bar{x})}{\partial x_j} + \frac{\partial u_\eps}{\partial x_N}(\bar{x}, g_\eps(\bar{x}))\frac{\partial^2 g_\eps(\bar{x})}{\partial x_i
\partial x_j} = 0,
\end{split}
\]
in order to obtain
{\small \begin{equation}
\label{eq: unfolding equality}
\begin{split}
&\frac{1}{\eps^2}\frac{\partial^2 \hat{u}_\eps}{\partial y_i \partial y_j}(\bar{x}, \bar{y},  b(\bar{y})) +
\frac{\eps^{\alpha-1}}{\eps^{\alpha+1}}\frac{\partial^2 \hat{u}_\eps}{\partial y_i\partial y_N}(\bar{x}, \bar{y}, b(\bar{y})) \frac{\partial
b(\bar{y})}{\partial y_j} + \frac{\eps^{\alpha-1}}{\eps^{\alpha + 1}}\frac{\partial^2 \hat{u}_\eps}{\partial y_j\partial y_N}(\bar{x},
\bar{y}, b(\bar{y})) \frac{\partial b(\bar{y})}{\partial y_i}\\
&+ \frac{\eps^{2\alpha-2}}{\eps^{2\alpha}}\frac{\partial^2 \hat{u}_\eps}{\partial y^2_N}(\bar{x}, \bar{y}, b(\bar{y})) \frac{\partial
b(\bar{y})}{\partial y_i}\frac{\partial b(\bar{y})}{\partial y_j} + \frac{\eps^{\alpha-2}}{\eps^{\alpha}} \frac{\partial
\hat{u}_\eps}{\partial y_N}(\bar{x}, \bar{y}, b(\bar{y}))\frac{\partial^2 b(\bar{y})}{\partial y_i \partial y_j} = 0.
\end{split}
\end{equation}}
Note that $\frac{1}{\eps^2}\frac{\partial^2 \hat{u}_\eps}{\partial y_i \partial y_j}(\bar{x}, \bar{y},  b(\bar{y})) \to \frac{\partial^2
u}{\partial x_i \partial x_j}(\bar{x},0) = 0$, and $\frac{1}{\eps^{\alpha+1}}\frac{\partial^2 \hat{u}_\eps}{\partial y_i\partial y_N}(\bar{x},
\bar{y}, b(\bar{y})) \to \frac{\partial^2 u}{\partial x_i\partial x_N}(\bar{x}, 0)$, $\frac{1}{\eps^{2\alpha}}\frac{\partial^2
\hat{u}_\eps}{\partial y^2_N}(\bar{x}, \bar{y}, b(\bar{y})) \to \frac{\partial^2 u}{\partial x^2_N}(\bar{x}, 0)$ as $\eps \to 0$, where the
limits are in $L^2(W \times Y)$. Hence, if $1 < \alpha < 2$ we deduce that all the summands in \eqref{eq: unfolding equality} are vanishing in
$L^2(W \times Y)$ with the possible exception of $ \frac{\eps^{\alpha-2}}{\eps^{\alpha}} \frac{\partial \hat{u}_\eps}{\partial y_N}(\bar{x},
\bar{y}, b(\bar{y}))\frac{\partial^2 b(\bar{y})}{\partial y_i \partial y_j}$. Since equality \eqref{eq: unfolding equality} must hold, this
implies that also this last summand is bounded; hence,
\[
\frac{1}{\eps^{\alpha}} \frac{\partial \hat{u}_\eps}{\partial y_N}(\bar{x}, \bar{y}, b(\bar{y}))\frac{\partial^2 b(\bar{y})}{\partial y_i
\partial y_j} \to 0,
\]
in $L^2(W \times Y)$ as $\eps \to 0$, and consequently $\frac{\partial u}{\partial x_N}(\bar{x}, 0)\frac{\partial^2 b(\bar{y})}{\partial y_i
\partial y_j} = 0$ for almost all $(\bar{x}, \bar{y}) \in (W \times Y)$. Then $\frac{\partial u}{\partial x_N}(\bar{x}, 0) = 0$ for almost all
$\bar{x} \in W$, concluding the proof.

\end{proof}

\begin{lemma}
\label{lemma: easy averages}
Let $(v_\eps)_\eps$ be a sequence of functions in $L^2(\Theta)$, for a given bounded open set $\Theta \subset \R^N$. Let $v \in L^2(\Theta)$,
and assume that $v_\eps \to v$ in $L^2(\Theta)$. For all $\eps > 0$ let $C_\eps(x) = \{y \in \R^N : |x-y| < \eps\}$ and we define
\[
\overline{v_\eps}(x) = \frac{1}{\eps^N}\int_{C_\eps(x)} v_\eps(y)\, dy,
\]
for almost all $x \in \Theta$. Then $\overline{v_\eps} \to v$ in $L^2(\Theta)$ as $\eps \to 0$.
\end{lemma}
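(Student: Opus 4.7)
The strategy is a three-term triangle inequality together with the fact that the averaging operator is $L^2$-bounded uniformly in $\eps$. Setting $\overline{v}(x) := \eps^{-N}\int_{C_\eps(x)} v(y)\, dy$ (the same averaging applied to the fixed limit $v$), I split
\[
\overline{v_\eps}(x) - v(x) = \big(\overline{v_\eps}(x) - \overline{v}(x)\big) + \big(\overline{v}(x) - v(x)\big)
\]
and estimate each piece separately in $L^2(\Theta)$. Since $\Theta$ is bounded and $\eps$ is small, I extend all functions by zero outside $\Theta$ without affecting the limit.

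For the first piece, I apply Cauchy--Schwarz pointwise to the average: using $|C_\eps(x)| \le c\eps^N$,
\[
|\overline{v_\eps}(x) - \overline{v}(x)|^2 \le c\,\eps^{-N}\int_{C_\eps(x)} |v_\eps(y) - v(y)|^2\, dy.
\]
Integrating in $x \in \Theta$ and interchanging the order of integration by Fubini yields
\[
\norma{\overline{v_\eps} - \overline{v}}_{L^2(\Theta)}^2 \le c'\, \norma{v_\eps - v}_{L^2(\R^N)}^2,
\]
which vanishes by hypothesis as $\eps \to 0$.

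For the second piece, this is the standard $L^2$ convergence of an approximate identity applied to the fixed function $v$. I proceed by density: given $\eta > 0$, choose $\phi \in C_c(\R^N)$ with $\norma{v - \phi}_{L^2(\R^N)} < \eta$. For the continuous compactly supported $\phi$ the pointwise estimate $|\overline{\phi}(x) - \phi(x)| \le \sup_{|h|\le \eps}|\phi(x+h) - \phi(x)|$, combined with uniform continuity of $\phi$ and boundedness of $\Theta$, yields $\overline{\phi} \to \phi$ uniformly, hence in $L^2(\Theta)$. The $L^2$-boundedness of the averaging operator (the same Fubini argument as in the first step, applied to $v - \phi$) then absorbs the error $\norma{\overline{v} - \overline{\phi}}_{L^2(\Theta)} \lesssim \norma{v - \phi}_{L^2(\R^N)} < \eta$, closing the standard $\eta/3$ triangle-inequality argument.

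The only subtle point is the uniform-in-$\eps$ $L^2$-boundedness of the averaging operator, but this falls out immediately from the Fubini computation above; no additional obstacle is anticipated since the argument is essentially the $L^2$ version of the Lebesgue differentiation theorem.
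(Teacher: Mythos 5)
Your proposal is correct and takes essentially the same route as the paper: the same decomposition into $\overline{v_\eps}-\overline{v}$ and $\overline{v}-v$, with Cauchy--Schwarz (the paper's Jensen) plus Fubini--Tonelli giving the uniform $L^2$-bound on the averaging operator, and a density argument against continuous (in the paper, $C^1$) functions handling the approximate-identity convergence of the fixed term. The only differences are cosmetic: you extend by zero to $\R^N$, while the paper restricts to the inner set $\Theta^\eps=\{x\in\Theta:\dist(x,\partial\Theta)>\eps\}$ to deal with the boundary.
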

\begin{proof}
We claim that
\begin{equation}
\label{claim lebesgue thm}
\overline{v}(x):= \frac{1}{\eps^N}\int_{C_\eps(x)} v(y) dy \to v(x),
\end{equation}
strongly in $L^2(\Theta)$ as $\eps \to 0$. Let $\delta > 0$ be fixed and let $w \in C^1(\Theta) \cap L^2(\Theta)$ such that
$\norma{v-w}_{L^2(\Theta)} \leq \delta$. Then
\[
\begin{split}
&\overline{v}(x) - v(x) = \frac{1}{\eps^N}\int_{C_\eps(x)} (v(y) - v(x))\,dy\\
&= \frac{1}{\eps^N}\int_{C_\eps(x)} (v(y) - w(y))\,dy + (w(x)-v(x)) + \frac{1}{\eps^N}\int_{C_\eps(x)} (w(y) - w(x))\,dy.
\end{split}
\]
Let us define $\Theta^\eps = \{x \in \Theta : \dist(x, \partial \Theta) > \eps\}$. Note that
\[
\begin{split}
\int_{\Theta^\eps} \Bigg\lvert \frac{1}{\eps^N}\int_{C_\eps(x)} (v(y) - w(y) ) dy \Bigg\rvert^2 dx &\leq \int_{\Theta^\eps}
\frac{1}{\eps^N}\int_{C_\eps(x)} |v(y) - w(y)|^2\, dydx \\
&= \int_{\Theta^\eps} |v(y) - w(y)|^2 \Bigg(\frac{1}{\eps^N}\int_{C_\eps(y)}\, dx\Bigg)\,dy \leq C \delta^2
\end{split}
\]
where we have used Jensen's inequality and Tonelli Theorem. Moreover, it is clear that
\[
\norma*{\frac{1}{\eps^N}\int_{C_\eps(x)} (w(y) - w(x))dy}_{L^2(\Theta)} \leq C \eps.
\]
Hence, $ \norma{\overline{v} - v}_{L^2(\Theta^\eps)} \leq C( \delta + \eps ) \leq C' \delta$, concluding the proof of claim \eqref{claim
lebesgue thm}. Now note that
\[
\norma{\overline{v_\eps} - \overline{v}}_{L^2(\Theta^\eps)} \leq \Bigg(\int_{\Theta^\eps}\Bigg( \frac{1}{\eps^N} \int_{C_\eps(x)}|v_\eps(y) -
v(y)|^2 dy\Bigg) dx\Bigg)^{1/2}.
\]
By Tonelli Theorem we can exchange the order of the integrals in order to obtain
\[
\int_{\Theta^\eps}\Bigg( \frac{1}{\eps^N} \int_{C_\eps(x)}|v_\eps(y) - v(y)|^2 dy\Bigg) dx \leq \norma{v_\eps - v}^2_{L^2(\Theta)}
\frac{1}{\eps^N} \int_{C_\eps(y)} dx = \norma{v_\eps - v}^2_{L^2(\Theta)}.
\]
Hence, $\norma{\overline{v_\eps} - \overline{v}}_{L^2(\Theta^\eps)} \leq \norma{v_\eps - v}_{L^2(\Theta)}$; consequently,
\[
\norma{\overline{v_\eps} - v}_{L^2(\Theta^\eps)} \leq \norma{\overline{v_\eps} - \overline{v}}_{L^2(\Theta^\eps)} + \norma{\overline{v} -
v}_{L^2(\Theta^\eps)} \leq \norma{v_\eps - v}_{L^2(\Theta)} + \norma{\overline{v} - v}_{L^2(\Theta^\eps)},
\]
and the right-hand side tends to zero as $\eps \to 0$.
\end{proof}

\section*{Acknowledgments.}
The author is very thankful to Prof. P.D. Lamberti for several discussions and suggestions.\\
The author acknowledge the support of EPSRC, grant EP/T000902/1.

\bibliographystyle{acm}
\bibliography{triharmonic_weak}

\end{document}